\documentclass[12pt]{article}

\usepackage{amsmath}
\usepackage{amsfonts}
\usepackage{amssymb}
\usepackage{wasysym}
\usepackage{amsthm}
\usepackage[margin=1in]{geometry}

\title{Non-negative integral level affine Lie algebra tensor categories and their associativity isomorphisms}
\author{Robert McRae\\
\small \it 
Beijing International Center for Mathematical Research\\
\small \it Peking University, Beijing, China 100084\\
\small \textit{E--mail address:} \texttt{robertmacrae@math.pku.edu.cn}}
\date{}

    \theoremstyle{definition}\newtheorem{rema}{Remark}[section]
    \theoremstyle{plain}\newtheorem{propo}[rema]{Proposition}
    \newtheorem{theo}[rema]{Theorem}
    \theoremstyle{definition}\newtheorem{defi}[rema]{Definition}
    \theoremstyle{plain}\newtheorem{lemma}[rema]{Lemma}
    \newtheorem{corol}[rema]{Corollary}
    \theoremstyle{definition}\newtheorem{exam}[rema]{Example}
    \theoremstyle{definition}

\begin{document}

\newcommand{\N}{\mathbb{N}}
\newcommand{\Z}{\mathbb{Z}}
\newcommand{\Q}{\mathbb{Q}}
\newcommand{\R}{\mathbb{R}}
\newcommand{\C}{\mathbb{C}}
\newcommand{\gvmu}{V_{\widehat{\mathfrak{g}}}(\ell,U)}
\newcommand{\imu}{L_{\widehat{\mathfrak{g}}}(\ell,U)}
\newcommand{\gvmzero}{V_{\widehat{\mathfrak{g}}}(\ell,0)}
\newcommand{\imzero}{L_{\widehat{\mathfrak{g}}}(\ell,0)}
\newcommand{\gvmlambda}{V_{\widehat{\mathfrak{g}}}(\ell,L_\lambda)}
\newcommand{\imlambda}{L_{\widehat{\mathfrak{g}}}(\ell,L_\lambda)}
\newcommand{\Wo}{W^{(1)}}
\newcommand{\Wt}{W^{(2)}}
\newcommand{\Wth}{W^{(3)}}
\newcommand{\Wi}{W^{(i)}}
\newcommand{\wo}{w_{(1)}}
\newcommand{\wt}{w_{(2)}}
\newcommand{\wth}{w_{(3)}}
\newcommand{\wi}{w_{(i)}}
\newcommand{\g}{\mathfrak{g}}
\newcommand{\ghat}{\widehat{\mathfrak{g}}}
\newcommand{\ilambdaone}{L_{\widehat{\mathfrak{g}}}(\ell,\lambda_1)}
\newcommand{\ilambdatwo}{L_{\widehat{\mathfrak{g}}}(\ell,\lambda_2)}
\newcommand{\ilambdathree}{L_{\widehat{\mathfrak{g}}}(\ell,\lambda_3)}
\newcommand{\imuone}{L_{\widehat{\mathfrak{g}}}(\ell,U_1)}
\newcommand{\imutwo}{L_{\widehat{\mathfrak{g}}}(\ell,U_2)}
\newcommand{\imuthree}{L_{\widehat{\mathfrak{g}}}(\ell,U_3)}
\newcommand{\T}{T^{(\ell)}_{\lambda_1, \lambda_2}}
\newcommand{\ittensprod}{(W_1\boxtimes_{P(z_1-z_2)} W_2)\boxtimes_{P(z_2)} W_3}
\newcommand{\prodtensprod}{W_1\boxtimes_{P(z_1)} (W_2\boxtimes_{P(z_2)} W_3)}

\maketitle

\newcommand{\nordcirc}{\mbox{\small $\genfrac{}{}{0pt}{}{\circ}{\circ}$}}
\numberwithin{equation}{section}

\begin{abstract}
\noindent For a finite-dimensional simple Lie algebra $\g$, we use the vertex 
tensor category theory of Huang and Lepowsky to identify the category of 
standard modules for the affine Lie algebra $\ghat$ at a fixed level $\ell\in\N$ 
with a certain tensor category of finite-dimensional $\g$-modules. More 
precisely, the category of level $\ell$ standard $\ghat$-modules is the module 
category for the simple vertex operator algebra $\imzero$, and as is well known, 
this category is equivalent as an abelian category to $\mathbf{D}(\g,\ell)$, the 
category of finite-dimensional modules for the Zhu's algebra $A(\imzero)$, which 
is a quotient of $U(\g)$. Our main result is a direct construction using 
Knizhnik-Zamolodchikov equations of the associativity isomorphisms in 
$\mathbf{D}(\g,\ell)$ induced from the associativity isomorphisms constructed by 
Huang and Lepowsky in $\imzero-\mathbf{mod}$. This construction shows that 
$\mathbf{D}(\g,\ell)$ is closely related to the Drinfeld category of 
$U(\g)[[h]]$-modules used by Kazhdan and Lusztig to identify categories of 
$\ghat$-modules at irrational and most negative rational levels with categories 
of quantum group modules.
\end{abstract}

\tableofcontents

\section{Introduction}

Suppose $\g$ is a finite-dimensional simple Lie algebra over $\C$; then the 
affine Lie algebra $\ghat$ is a central extension of the loop algebra 
$\g\otimes\C[t, t^{-1}]$ by a one-dimensional space $\C\mathbf{k}$. If 
$\mathbf{k}$ acts on a $\ghat$-module by a scalar $\ell\in\C$, we say that the 
module has level $\ell$. Categories of $\ghat$-modules at fixed non-negative 
integral levels are particularly important in physics, since they correspond to 
WZNW models, important examples of rational conformal field theories. The study 
of conformal field theory by physicists, especially by Moore and Seiberg, 
predicted that the category of standard (that is, integrable highest weight) 
$\ghat$-modules at a fixed level $\ell\in\N$ should have the structure of a 
rigid braided tensor category (see for instance \cite{BPZ}, \cite{KZ}, and 
\cite{MS}). Indeed, in \cite{KL1} and \cite{KL2}, Kazhdan and Lusztig showed 
that when $\ell\notin\Q$, or when $\ell\in\Q$ and $\ell<-h^\vee$ where $h^\vee$ 
is the dual Coxeter number of $\g$, a certain category of $\ghat$-modules of 
level $\ell$ has a natural braided tensor category structure, and they proved 
rigidity for most of these tensor categories in \cite{KL4}. However, their 
constructions do not apply to the case $\ell\in\N$.

There are several approaches to obtaining tensor category structure when 
$\ell\in\N$. First, motivated by Kazhdan and Lusztig's constructions, Huang and 
Lepowsky developed a general tensor product theory for the category of modules 
for a vertex operator algebra in \cite{HL1}-\cite{HL3} and \cite{H1}. Since the 
category of standard $\ghat$-modules at a fixed level $\ell\in\N$ is the module 
category for a simple vertex operator algebra $\imzero$ (\cite{FZ}), they were 
able to use this theory in \cite{HL4} to prove that this category has natural 
braided tensor category structure; rigidity, and indeed modularity, of this 
braided tensor category was proved in \cite{H4}. More recently, in 
\cite{HLZ1}-\cite{HLZ8}, Huang, Lepowsky, and Zhang have developed a more 
general logarithmic tensor category theory for so-called generalized modules for 
a vertex operator algebra. Using this theory, Zhang showed in \cite{Zha} that 
the braided tensor categories of Kazhdan and Lusztig when $\ell\notin\Q$ or 
$\ell\in\Q_{<-h^\vee}$ agree with the vertex algebraic braided tensor categories 
of certain generalized $\imzero$-modules.

Another approach to the $\ell\in\N$ case using ideas of Beilinson, Feigin, and 
Mazur yields braided tensor category structure (see for instance Chapter 7 of 
\cite{BK}), but not rigidity. A third approach by Finkelberg in \cite{Fi1}, 
\cite{Fi2} involves transferring Kazhdan and Lusztig's constructions at negative 
level to the positive level categories. This work also requires the Verlinde 
formula for multiplicities of irreducible modules in tensor products, which was 
proved independently by Faltings \cite{F} and Teleman \cite{T} for 
$\ghat$-modules and was proved by Huang \cite{H3} in a general vertex algebraic 
context. Finkelberg's work does not apply to the cases $E_6$ level $1$, $E_7$ 
level $1$, and $E_8$ levels $1$ and $2$ because Kazhdan and Lusztig did not 
prove rigidity for the corresponding negative level categories.

All the constructions of tensor category structure on $\imzero-\mathbf{mod}$, 
that is, the category of standard $\ghat$ modules at level $\ell\in\N$, are 
complicated by the fact that the usual vector space tensor product of two 
modules does not have a natural module structure. Note, for instance, that the 
usual Lie algebra tensor product of $\ghat$-modules does not preserve level. 
This in turn means the associativity isomorphisms in this tensor category are 
highly non-trivial (see for instance \cite{H1} and \cite{HLZ6}). As a result, 
useful, explicit descriptions of the tensor category structure on 
$\imzero-\mathbf{mod}$ are missing from the literature. In this paper, we use 
the vertex tensor category theory of Huang and Lepowsky to give an explicit 
description of the tensor category $\imzero-\mathbf{mod}$ when $\ell\in\N$, in 
particular a description of the associativity isomorphisms. More precisely, we 
show that there is an equivalence between $\imzero-\mathbf{mod}$ and an explicit 
tensor category $\mathbf{D}(\g,\ell)$ of finite-dimensional $\g$-modules. We 
expect that this description will be useful for obtaining a uniform proof of the 
braided tensor equivalence between $\imzero-\mathbf{mod}$ and a category of 
quantum group modules, which we recall now.

In addition to constructing braided tensor categories of $\ghat$-modules, 
Kazhdan and Lusztig showed in \cite{KL3} and \cite{KL4} that their category at 
level $\ell$ is equivalent to the braided tensor category of finite-dimensional 
modules for the quantum group $U_q(\g)$, where $q=e^{\pi i/m(\ell+h^\vee)}$; 
here $m$ is the ratio of the squared length of the long roots of $\g$ to the 
squared length of the short roots. Then Finkelberg's work in \cite{Fi1}, 
\cite{Fi2} showed that, with the possible exceptions of $E_6$ level $1$, $E_7$ 
level $1$, and $E_8$ levels $1$ and $2$, the category of standard 
$\ghat$-modules of level $\ell\in\N$ is equivalent to a certain semisimple 
subquotient of the corresponding category of finite-dimensional quantum group 
modules. As mentioned above, the exceptions exist because of the use of Kazhdan 
and Lusztig's constructions at negative levels. Thus to prove the equivalence 
between $\imzero-\mathbf{mod}$ when $\ell\in\N$ and the quantum group category 
with no exceptions, we need an explicit description of the (rigid) tensor 
category structure at non-negative level.

We now discuss the results of this paper in more detail. Since 
$\imzero-\mathbf{mod}$ when $\ell\in\N$ is semisimple, it follows from \cite{Z} 
that there is an equivalence of abelian categories between 
$\imzero-\mathbf{mod}$ and the category of finite dimensional modules for the 
Zhu's algebra $A(\imzero)$ which takes an irreducible $\imzero$-module to its 
lowest conformal weight space. Since $A(\imzero)\cong U(\g)/\langle 
x_\theta^{\ell+1}\rangle$ where $x_\theta$ is a root vector corresponding the 
longest root $\theta$ of $\g$ by \cite{FZ}, $\imzero-\mathbf{mod}$ is equivalent 
as an abelian category to the category $\mathbf{D}(\g,\ell)$ whose objects are 
finite-dimensional $\g$-modules on which $x_\theta^{\ell+1}$ acts trivially. 
Then we can use this equivalence to transfer the tensor category structure on 
$\imzero-\mathbf{mod}$ constructed by Huang and Lepowsky to 
$\mathbf{D}(\g,\ell)$, and it remains to give an explicit description of the 
tensor products, unit isomorphisms, and associativity isomorphisms thus induced 
in $\mathbf{D}(\g,\ell)$.

The tensor products and unit isomorphisms in $\mathbf{D}(\g,\ell)$ are 
straightforward to describe. Since tensor products of vertex operator algebra 
modules are defined using a universal property involving intertwining operators 
(in analogy with the definition of a tensor product of vector spaces in terms of 
a universal property involving bilinear maps; see for instance the introduction 
to \cite{HLZ1}), the description of the space of intertwining operators among 
irreducible $\imzero$-modules given in \cite{FZ} allows us to identify the 
tensor product of two modules $U_1$ and $U_2$ in $\mathbf{D}(\g,\ell)$ as a 
certain quotient $U_1\boxtimes U_2$ of the usual tensor product $\g$-module. 
Such a quotient is necessary because $\mathbf{D}(\g,\ell)$ is not closed under 
the usual tensor product of $\g$-modules. We remark that we could obtain the 
tensor product $U_1\boxtimes U_2$ in $\mathbf{D}(\g,\ell)$ simply by taking a 
direct sum of irreducible $\g$-modules with multiplicities calculated using the 
Verlinde formula (\cite{F}, \cite{T}, \cite{H3}) or using a result such as 
Theorem 6.2 in \cite{FF}. However, this would be less natural than our approach 
because it would leave the relation of $U_1\boxtimes U_2$ to $U_1\otimes U_2$ 
unclear, and it would make it more difficult to understand the associativity 
isomorphisms in $\mathbf{D}(\g,\ell)$. The unit object of 
$\mathbf{D}(\g,\ell)$ is the trivial one-dimensional $\g$-module 
$\C\mathbf{1}$, 
and the unit isomorphisms are obvious.

Most of the work in this paper focuses on describing the associativity 
isomorphisms in $\mathbf{D}(\g,\ell)$. They cannot be trivial because if $U_1$, 
$U_2$, and $U_3$ are modules in $\mathbf{D}(\g,\ell)$, $U_1\boxtimes 
(U_2\boxtimes U_3)$ and $(U_1\boxtimes U_2)\boxtimes U_3$ are typically 
different (albeit isomorphic) quotients of $U_1\otimes U_2\otimes U_3$. Our main 
result is that the associativity isomorphisms in $\mathbf{D}(\g,\ell)$ come from 
solutions of Knizhnik-Zamolodchikov (KZ) equations (\cite{KZ}), as in a category 
of modules for the trivial deformation $U(\g)[[h]]$ of the universal 
enveloping algebra of $\g$, where $h$ is a formal variable, constructed by 
Drinfeld (\cite{D1}, \cite{D2}, \cite{D3}; see also \cite{BK}, \cite{Ka}). 
(Drinfeld's category is equivalent to a category of modules for the formal 
quantum group $U_{h}(\g)$, and Kazhdan and Lusztig used an explicit 
equivalence between these categories in \cite{KL3}, \cite{KL4} to show the 
equivalence between their category of $\ghat$-modules and the category of 
$U_q(\g)$-modules.)

To describe the associativity isomorphisms in $\mathbf{D}(\g,\ell)$, consider 
objects $U_1$, $U_2$, and $U_3$ in $\mathbf{D}(\g,\ell)$ and the one-variable KZ 
equation
\begin{equation}\label{introKZ} 
(\ell+h^\vee)\dfrac{d}{dz}\varphi(z)=\left(\dfrac{\Omega_{1,2}}{z}-\dfrac{
\Omega_{2,3}}{1-z}\right)\varphi(z)
\end{equation}
where $\varphi(z)$ is a $(U_1\otimes U_2\otimes U_3)^*$-valued analytic 
function, and $\Omega_{1,2}$, $\Omega_{2,3}$ are certain (non-commuting) 
operators on $(U_1\otimes U_2\otimes U_3)^*$. In the case $\ell\notin\Q$, any 
solution $\varphi(z)$ of \eqref{introKZ} can be expressed as
\begin{equation*}
 \varphi(z)=z^{\Omega_{1,2}/(\ell+h^\vee)}\cdot\varphi_0(z)
\end{equation*}
around $z=0$, where $\varphi_0(z)$ is analytic in a neighborhood of $0$, and as
\begin{equation*}
 \varphi(z)=(1-z)^{\Omega_{2,3}/(\ell+h^\vee)}\cdot\varphi_1(z)
\end{equation*}
around $z=1$, where $\varphi_1(z)$ is analytic in a neighborhood of $1$. The 
solution $\varphi(z)$ is completely determined by the initial value 
$\varphi_0(0)$, and also by the initial value $\varphi_1(1)$. Then there is a 
unique automorphism $\Phi_{KZ}$ of $(U_1\otimes U_2\otimes U_3)^*$, called the 
Drinfeld associator, that maps the initial value $\varphi_0(0)$ to 
$\varphi_1(1)$ for any solution $\varphi(z)$ of \eqref{introKZ}. In the case 
$\ell\in\Q$, the situation is not so simple because expansions of solutions to 
\eqref{introKZ} about the singularities $0$ and $1$ may contain logarithms. 
However, we show that series solutions around the singularities remain 
determined by initial data from $(U_1\otimes U_2\otimes U_3)^*$, and we 
construct a Drinfeld associator $\Phi_{KZ}$ that maps the initial datum for any 
solution at the singularity $0$ to the initial datum at $1$.

Our main theorem is that when $\ell\in\N$, the associativity isomorphisms in 
$\mathbf{D}(\g,\ell)$ are induced by adjoints of Drinfeld associators. In 
particular, the adjoint $\Phi_{KZ}^*$ of $\Phi_{KZ}$ induces a well-defined 
isomorphism between the two quotients $U_1\boxtimes(U_2\boxtimes U_3)$ and 
$(U_1\boxtimes U_2)\boxtimes U_3$ of $U_1\otimes U_2\otimes U_3$. This assertion 
is not obvious from the construction of the Drinfeld associator. Rather, it 
follows from the existence of associativity isomorphisms in 
$\imzero-\mathbf{mod}$ proven in \cite{HL4}, which is equivalent to the 
convergence and associativity of intertwining operators among $\imzero$-modules 
(see \cite{H1} or \cite{HLZ6}). The associativity of intertwining operators 
follows from the fact (first shown in \cite{KZ}) that a product of intertwining 
operators
\begin{equation}\label{product}
 u_{(1)}\otimes u_{(2)}\otimes u_{(3)}\mapsto\langle 
u_{(4)}',\mathcal{Y}_1(u_{(1)},1)\mathcal{Y}_2(u_{(2)},1-z)u_{(3)}\rangle,
\end{equation}
when $u_{(1)}$, $u_{(2)}$, $u_{(3)}$, and $u_{(4)}'$ are lowest-conformal-weight 
vectors of their respective modules, defines a solution of \eqref{introKZ} 
expanded about the singularity $z=1$, while an iterate of intertwining operators
\begin{equation}\label{iterate}
 u_{(1)}\otimes u_{(2)}\otimes u_{(3)}\mapsto\langle 
u_{(4)}',\mathcal{Y}^1(\mathcal{Y}^2(u_{(1)},z)u_{(2)},1-z)u_{(3)}\rangle
\end{equation}
corresponds to a solution of \eqref{introKZ} expanded about the singularity 
$z=0$. From this, the definitions imply that the Drinfeld associator maps the 
initial datum of an iterate functional to the initial datum of a corresponding 
product functional. Our main theorem then follows from the (non-trivial) fact 
that the initial data determining the series expansions \eqref{product} and 
\eqref{iterate} are given by replacing all intertwining operators with their 
projections to the lowest conformal weight spaces of their target modules.

Our description of the associativity isomorphisms in $\mathbf{D}(\g,\ell)$ might 
be expected since a similar result holds for the Kazhdan-Lusztig category of 
$\ghat$-modules when $\ell\notin\Q$ (see for instance the discussion in Section 
1.4 of \cite{BK}). However, there are several complications that make the case 
$\ell\in\N$ more diffficult. First, it is somewhat more complicated to construct 
Drinfeld associators when $\ell\in\Q$ because expansions of solutions to the KZ 
equation \eqref{introKZ} about its singularities typically contain logarithms. 
Because of this, it is more difficult to identify the initial data at the 
singularities determining a KZ solution corresponding to a product or iterate of 
intertwining operators. Identifying these initial data requires a theorem 
restricting the weights of irreducible standard $\ghat$-modules. Moreover, we 
use the vertex tensor category structure on $\imzero-\mathbf{mod}$ from 
\cite{HL4} to show that adjoints of Drinfield associators are isomorphisms 
between the correct quotients of triple tensor products of $\g$-modules in 
$\mathbf{D}(\g,\ell)$. It seems to be difficult to prove directly that 
$\mathbf{D}(\g,\ell)$, with its correct tensor product, is a tensor category, 
without using the tensor category structure on $\imzero-\mathbf{mod}$.

Although we focus in this paper on tensor categories of non-negative 
integral level affine Lie algebra modules, it is interesting to consider whether 
the methods and results here extend to $\ell\notin\N$. First, we remark 
that our methods and results easily extend to the case of generic level, 
$\ell\notin\Q$. Here, we consider the semisimple category generated by irreducible $\imzero$-modules, called $\mathcal{O}_{\ell+h^\vee}$ in \cite{KL1}-\cite{KL4}. It is easy to use results in \cite{FZ} and \cite{Li2} and the methods of this paper to show that the vertex algebraic tensor category structure on $\mathcal{O}_{\ell+h^\vee}$ constructed in \cite{Zha} based on \cite{HLZ1}-\cite{HLZ8} is equivalent to a tensor category whose objects consist of all finite dimensional $\g$-modules, whose tensor product is the usual tensor product of $\g$-modules, and whose associativity isomorphisms are obtained from Drinfeld associators constructed from solutions of KZ equations. These results may be compared with the tensor category structure on $\mathcal{O}_{\ell+h^\vee}$ constructed in \cite{KL1}-\cite{KL4} (see also Section 1.4 of \cite{BK}).

For the case $\ell\in\Q_{<-h^\vee}$, our methods and results do not immediately 
generalize to yield a description of the tensor category structure on the 
category $\mathcal{O}_{\ell+h^\vee}$ of $\ghat$-modules considered in 
\cite{KL1}-\cite{KL4} and \cite{Zha}. The main problem is that in this case, 
modules in $\mathcal{O}_{\ell+h^\vee}$ are not always generated by their lowest 
conformal weight spaces, so $\mathcal{O}_{\ell+h^\vee}$ is not equivalent to the 
category of finite-dimensional $A(\imzero)$-modules. Moreover, KZ equations 
are no longer sufficient to describe the associativity isomorphisms. It may be 
possible to obtain an equivalence between $\mathcal{O}_{\ell+h^\vee}$ and a 
category of finite-dimensional modules for one of the algebras $A_N(\imzero)$, $N\in\Z_+$, constructed in \cite{DLM} generalizing Zhu's algebra, and it may be 
possible to obtain a description of the tensor product in 
$\mathcal{O}_{\ell+h^\vee}$ using the description of the space of (logarithmic) 
intertwining operators among a triple of generalized modules for a vertex 
operator algebra from \cite{HY}. However, the algebras $A_N(V)$ for a vertex 
operator algebra $V$ are typically very difficult to calculate explicitly. 
Alternatively, it may be possible to use the methods of this paper to describe a 
tensor category structure on the semisimple subcategory of 
$\mathcal{O}_{\ell+h^\vee}$ generated by irreducible $\imzero$-modules. This 
could possibly be interesting for comparison with the semisimple subquotient of the 
category of finite-dimensional $U_q(\g)$-modules, where $q$, as before, is the 
root of unity corresponding to $\ell$. We remark that for the case 
$\ell\in\Q_{>-h^\vee}\setminus\N$, we expect that the category of 
finitely-generated generalized modules for $\imzero$ should be a braided tensor category, 
but this does not seem to have been proven yet. In fact, in many cases these categories are not well understood; for instance, although irreducible $\imzero$-modules have been classified for certain rational levels in the case $\g=\mathfrak{sl}_2(\C)$ (\cite{AM}, \cite{DLM2}, \cite{RW}), such a classificiation does not seem to exist in general.

It is also interesting to consider how the methods and results in this paper may 
generalize to vertex operator algebras associated to other rational conformal 
field theories. Our analysis of $\imzero$-modules when $\ell\in\N$ is 
aided by the particularly simple nature of the KZ equations satisfied 
by correlation functions corresponding to compositions of intertwining operators.
 For a general vertex 
operator algebra $V$, Huang showed in \cite{H5} that compositions of intertwining operators among $V$-modules satisfying the 
$C_1$-cofiniteness condition satisfy systems of regular singular point 
differential equations. However, these differential equations are not generally 
explicit and may be of limited use for obtaining an explicit description of the 
associativity isomorphisms in the tensor category of $V$-modules. In situations 
where we have an explicit description of the abelian category of $V$-modules and 
explicit differential equations for correlation functions, the methods and results of this paper 
may be generalizable. For example, for Virasoro vertex operator algebras 
associated to minimal models in rational conformal field theory, the correlation 
functions satisfy Belavin-Polyakov-Zamolodchikov equations (\cite{BPZ}).

Now we discuss the outline of this paper. Section 2  recalls the affine Lie 
algebra $\ghat$ and its modules, the vertex operator algebra structure on 
certain $\ghat$-modules, such as $\imzero$, and the associated Kac-Moody Lie 
algebra $\widetilde{\g}$. Moreover, we prove a theorem on the weights of 
irreducible standard $\widetilde{\mathfrak{g}}$-modules. In Section 3, we recall 
the notions of intertwining operator and $P(z)$-intertwining map, 
$z\in\C^\times$, among modules for a vertex operator algebra, as well as the 
notion of $P(z)$-tensor product of modules for a vertex operator algebra. In 
Section 4, we recall the Zhu's algebra of a vertex operator algebra and the 
(tensor) equivalence between modules for a suitable vertex operator algebra, 
such as $\imzero$ when $\ell\in\N$, and finite-dimensional modules for its Zhu's 
algebra. We also recall the classification theorem of \cite{FZ} and \cite{Li2} 
for intertwining operators among certain modules for a vertex operator algebra 
and use it to describe $P(z)$-tensor products among $\imzero$-modules when 
$\ell\in\N$. In Section 5, we recall how products and iterates of intertwining 
operators among modules for $\imzero$ lead to solutions of the KZ equations, and 
we construct Drinfeld associators from solutions to a general one-variable KZ 
equation at arbitrary level. Finally, in Section 6, we use the associativity of 
intertwining operators among $\imzero$-modules to show that under the 
equivalence of categories given by the Zhu's algebra of $\imzero$, the 
associativity isomorphisms in $\imzero-\mathbf{mod}$ correspond to adjoints of 
Drinfeld associators.

\section{Affine Lie algebras and their modules}

In this section we recall the vertex operator algebras and their modules which 
come from representations of affine Lie algebras. We also recall some facts 
about affine Kac-Moody Lie algebras and prove a theorem on the weights of their 
irreducible standard modules.

\subsection{Vertex operator algebras from affine Lie algebras}

We fix a finite-dimensional simple complex Lie algebra $\g$, with Cartan 
subalgebra $\mathfrak{h}$. The Lie algebra $\g$ has a unique up to scale 
nondegenerate invariant bilinear form $\langle\cdot,\cdot\rangle$ which remains 
nondegenerate when restricted to $\mathfrak{h}$. Thus 
$\langle\cdot,\cdot\rangle$ induces a nondegenerate bilinear form on 
$\mathfrak{h}^*$, and we scale $\langle\cdot,\cdot\rangle$ so that
\begin{equation*}
 \langle\alpha,\alpha\rangle=2
\end{equation*}
when $\alpha\in\mathfrak{h}^*$ is a long root of $\g$. 

We recall some basic facts and notation regarding $\g$ (see for example 
\cite{Hu} for more details). We recall that the \textit{root lattice} $Q$ is the 
$\Z$-span of the simple roots $\lbrace\alpha_i\rbrace_{i=1}^{\mathrm{rank}\,\g}$ 
of $\g$, and we use $Q_+$ to denote the set of non-negative integral linear 
combinations of the simple roots. Then $\mathfrak{h}^*$ has a partial order 
$\prec$ given by $\alpha\prec\beta$ if and only if $\beta-\alpha\in Q_+$. The 
\textit{weight lattice} $P$ is the set of all $\lambda\in\mathfrak{h}^*$ such 
that
\begin{equation*}
 \dfrac{2\langle\lambda,\alpha_i\rangle}{\langle\alpha_i,\alpha_i\rangle}\in\Z
\end{equation*}
for each simple root $\alpha_i$. The set of \textit{dominant integral weights} 
$P_+\subseteq P$ is the set of weights $\lambda$ such that
\begin{equation*}
 \dfrac{2\langle\lambda,\alpha_i\rangle}{\langle\alpha_i,\alpha_i\rangle}\geq 0
\end{equation*}
for each simple root $\alpha_i$. An example of a dominant integral weight is the 
weight $\rho$ defined to be half the sum of the positive roots of $\g$. The 
weight $\rho$ satisfies 
$\frac{2\langle\rho,\alpha_i\rangle}{\langle\alpha_i,\alpha_i\rangle}=1$ for each 
simple root $\alpha_i$.

Every finite-dimensional 
$\g$-module $U$ is completely reducible, and the irreducible finite-dimensional 
$\g$-modules are given by the irreducible highest-weight modules $L_\lambda$ 
where $\lambda\in\mathfrak{h}^*$ is a dominant integral weight. Every $\g$- 
module $U$ has a contragredient: the dual space $U^*$ is a $\g$-module with the 
action of $\g$ given by
\begin{equation*}
 \langle a\cdot u',u\rangle=-\langle u', a\cdot u\rangle
\end{equation*}
for $a\in\g$, $u'\in U^*$, and $u\in U$. Moreover, the tensor product of two 
$\g$-modules $U$ and $V$ is a $\g$-module with the module structure given by
\begin{equation*}
 a\cdot(u\otimes v)=(a\cdot u)\otimes v+u\otimes (a\cdot v)
\end{equation*}
for $a\in\g$, $u\in U$, and $v\in V$.

We also recall the Casimir element $C=\sum_{i=1}^{\mathrm{dim}\,\g} 
\gamma_i^2\in U(\g)$, where $\lbrace \gamma_i\rbrace_{i=1}^{\mathrm{dim}\,\g}$ 
is an orthonormal basis with respect to the form $\langle\cdot,\cdot\rangle$ on 
$\g$. Since $C$ is central in $U(\g)$, $C$ acts on finite-dimensional 
irreducible modules by a scalar. In particular, $C$ acts on the adjoint 
representation $\g$ by a scalar $2 h^{\vee}$, where $h^{\vee}$ is the 
\textit{dual Coxeter number} of $\g$. More generally, $C$ acts on the 
irreducible $\g$-module $L_\lambda$ with highest weight $\lambda$ by the scalar
$\langle\lambda,\lambda+2\rho\rangle$. Since the highest weight of the adjoint 
representation is the highest root $\theta$ of $\g$, and since 
$\langle\theta,\theta\rangle=2$, it follows that
\begin{equation}\label{thetadotrho}
 \langle\rho,\theta\rangle=h^{\vee}-1.
\end{equation}
Because $\rho\in P_+$, it follows that $h^{\vee}$ is a positive integer.

The affine Lie algebra $\ghat$ is given by
\begin{equation*}
 \ghat=\g\otimes\mathbb{C}[t,t^{-1}]\oplus\mathbb{C}\mathbf{k}
\end{equation*}
where $\mathbf{k}$ is central and the remaining brackets are determined by
\begin{equation*}
 [g\otimes t^m, h\otimes t^n]=[a,b]\otimes t^{m+n}+m\langle 
g,h\rangle\delta_{m+n,0}\mathbf{k}
\end{equation*}
for $g,h\in\g$, $m,n\in\Z$. We also have the decomposition
\begin{equation*}
 \ghat=\ghat_-\oplus\ghat_0\oplus\ghat_+
\end{equation*}
where
\begin{equation*}
 \ghat_\pm=\coprod_{n\in\pm\Z_+} \g\otimes t^n,\;\;\;\ghat_0=\g\otimes 
t^0\oplus\mathbb{C}\mathbf{k}.
\end{equation*}

We construct modules for $\ghat$ as follows. Any finite-dimensional $\g$-module 
$U$ becomes a $\ghat_0\oplus\ghat_+$-module on which $\mathfrak{g}\otimes t^n$ 
acts trivially for $n>0$, $\g\otimes t^0$ acts as $\g$, and $\mathbf{k}$ as 
some \textit{level} $\ell\in\mathbb{C}$. Then we have the generalized Verma 
module
\begin{equation*}
 \gvmu=U(\ghat)\otimes_{U(\ghat_0\oplus\ghat_+)} U.
\end{equation*}
The generalized Verma module $\gvmu$ is the linear span of vectors of the form
\begin{equation*}
 g_1(-n_1)\cdots g_k(-n_k)u
\end{equation*}
where $g_i\in\g$, $n_i>0$, $u\in U$, and we use the notation $g(n)$ to denote 
the action of $g\otimes t^n$ on a $\ghat$-module. When $U$ is an irreducible 
$\g$-module, $\gvmu$ has a unique maximal irreducible quotient $\imu$. If $U$ is 
not 
irreducible, it is still a direct sum of irreducible $\g$-modules, and in this 
case we use $\imu$ to denote the corresponding direct sum of irreducible 
$\ghat$-modules. Note that in any case, $\imu$ is the quotient of $\gvmu$ by 
the maximal submodule which does not intersect $U$.
\begin{rema}
 If $L_\lambda$ is the finite-dimensional irreducible $\g$-module with highest 
weight $\lambda\in P_+$, we will typically use $V_{\ghat}(\ell,\lambda)$ and 
$L_{\ghat}(\ell,\lambda)$ to denote the $\ghat$-modules $\gvmlambda$ and 
$\imlambda$, respectively. In particular, $\gvmzero$ is the generalized Verma 
module induced from the trivial one-dimensional $\g$-module $\C\mathbf{1}$, and 
$\imzero$ is its irreducible quotient.
\end{rema}

When $\ell\neq-h^{\vee}$, the generalized Verma module $V_{\ghat}(\ell,0)$ 
induced from the one-dimensional $\mathfrak{g}$-module $\mathbb{C}\mathbf{1}$ 
has the structure of a vertex operator algebra (\cite{FZ}; see for example 
\cite{FLM}, \cite{FHL}, and \cite{LL} for the definitions of vertex operator 
algebra and module, and for notation). The vacuum vector of $\gvmzero$ is 
$\mathbf{1}$ and the vertex operator map determined by
\begin{equation}\label{vrtxop}
 Y(g(-1)\mathbf{1},x)=g(x)=\sum_{n\in\mathbb{Z}} g(n) x^{-n-1}
\end{equation}
for $g\in\mathfrak{g}$. The conformal vector $\omega$ of $\gvmzero$ is given by
\begin{equation*}
 \omega=\dfrac{1}{2(\ell+h^\vee)}\sum_{i=1}^{\mathrm{dim}\,\mathfrak{g}} 
\gamma_i(-1)^2\mathbf{1},
\end{equation*}
where as before $\lbrace \gamma_i\rbrace_{i=1}^{\mathrm{dim}\,\g}$ is an 
orthonormal basis of $\mathfrak{g}$. 

It follows from \eqref{vrtxop} and the Jacobi identity for vertex operator 
algebras that the Virasoro operators $L(n)$ on $\gvmzero$ for $n\in\Z$ are given 
by
\begin{equation}\label{L(n)}
 L(n)=\dfrac{1}{2(\ell+h^\vee)}\sum_{i=1}^{\mathrm{dim}\,\g}\sum_{m\in\Z} 
\nordcirc \gamma_i(m)\gamma_i(n-m)\nordcirc;
\end{equation}
here the normal ordering notation means
\begin{equation*}
 \nordcirc g(k)h(l)\nordcirc =\left\lbrace\begin{array}{ccc}
                                      g(k)h(l) & \mathrm{if} & k\leq 0\\
                                      h(l)g(k) & \mathrm{if} & k>0
                                     \end{array}\right. 
\end{equation*}
for $g,h\in\g$ and $k,l\in\Z$. In particular, we have
\begin{equation}\label{L(0)} 
L(0)=\dfrac{1}{2(\ell+h^\vee)}\sum_{i=1}^{\mathrm{dim}\,\g}\gamma_i(0)^2+\dfrac{
1}{\ell+h^\vee}\sum_{i=1}^{\mathrm{dim}\,\g}\sum_{n>0}\gamma_i(-n)\gamma_i(n)
\end{equation}
and
\begin{equation}\label{L(-1)}
 L(-1)=\dfrac{1}{\ell+h^\vee}\sum_{i=1}^{\mathrm{dim}\,\g}\sum_{n\geq 
0}\gamma_i(-n-1)\gamma_i(n).
\end{equation}

For any weight $\lambda\in P_+$ of $\mathfrak{g}$, the generalized Verma module 
$V_{\ghat}(\ell,\lambda)$ is a $\gvmzero$-module with vertex operator map also 
determined by (\ref{vrtxop}). Thus the Virasoro operators $L(n)$ for $n\in\Z$ on 
$V_{\ghat}(\ell,\lambda)$ are also given by \eqref{L(n)}. In particular, since 
$g(n)$ for $g\in\g$ and $n>0$ annihilates $L_\lambda\subseteq 
V_{\ghat}(\ell,\lambda)$, we have from \eqref{L(0)} that $L(0)$ acts on 
$L_\lambda$ as the scalar
\begin{equation}\label{hlambdal}
 h_{\lambda,\ell}=\dfrac{1}{2(\ell+h^\vee)}\langle\lambda,\lambda+2\rho\rangle.
\end{equation}
Moreover, one can check that
\begin{equation}\label{L0comm}
 [L(0), g(n)]=-n\,g(n)
\end{equation}
for any $g\in\g$ and $n\in\Z$; this means that the conformal weight gradation of 
$V_{\ghat}(\ell,\lambda)$ is given by
\begin{equation*}
 \mathrm{wt}\,g_1(-n_1)\cdots g_k(-n_k)u=n_1+\ldots
+n_k+h_{\lambda,\ell}
\end{equation*}
for $g_i\in\g$, $n_i>0$, and $u\in L_\lambda$. The irreducible $\ghat$-module 
$L_{\ghat}(\ell,\lambda)$ is also a $\gvmzero$-zero module, and in fact every 
irreducible $V_{\ghat}(\ell,0)$-module is isomorphic to 
$L_{\ghat}(\ell,\lambda)$ for some $\lambda\in P_+$.

The irreducible $\ghat$-module $\imzero$ is also a vertex operator algebra with 
vertex operator algebra structure induced from $\gvmzero$. We shall be 
particularly interested in the case $\ell\in\mathbb{N}$. Then the category of 
$\imzero$-modules consists of all standard level $\ell$ $\ghat$-modules 
(\cite{FZ}). In particular, all $\imzero$-modules are completely reducible, and 
the irreducible $\imzero$-modules are given by $L_{\ghat}(\ell,\lambda)$ where 
$\lambda\in P_+$ satisfies $\langle\lambda,\theta\rangle\leq\ell$.

\subsection{The Kac-Moody Lie algebra $\widetilde{\g}$}\label{sub:gtilde}

Here we recall the affine Kac-Moody Lie algebra $\widetilde{\g}$ and some of its 
properties that we shall need later; for more details and proofs, see references 
such as \cite{L}, \cite{K}, \cite{MP}, and \cite{C}. Let us extend the algebra 
$\ghat$ to the Kac-Moody Lie algebra
\begin{equation*}
 \widetilde{\g}=\ghat\rtimes\mathbb{C}\mathbf{d}
\end{equation*}
where $[\mathbf{d},\mathbf{k}]=0$ and
\begin{equation}\label{dcomm}
 [\mathbf{d},g\otimes t^n]=n\,g\otimes t^n
\end{equation}
for $g\in\g$ and $n\in\Z$. Recalling the Cartan subalgebra $\mathfrak{h}$ of 
$\g$,
\begin{equation*}
 \mathfrak{H}=\mathfrak{h}\oplus\C\mathbf{k}\oplus\C\mathbf{d}
\end{equation*}
is a Cartan subalgebra for $\widetilde{\g}$. Then $\mathfrak{H}^*$ has a basis 
$\lbrace\alpha_i\rbrace_{i=1}^{\mathrm{rank}\,\g}\cup\lbrace\mathbf{k}',\mathbf{
d}'\rbrace$ where $\mathbf{k}'$ and $\mathbf{d}'$ are dual to $\mathbf{k}$ and 
$\mathbf{d}$, respectively. The remaining simple root of $\widetilde{\g}$ is 
given by
\begin{equation*}
 \alpha_0=-\theta+\mathbf{d}',
\end{equation*}
where as previously $\theta$ is the longest root of $\g$. We use $\Delta$ to 
denote the set of roots of $\widetilde{g}$ and $\Delta_{\pm}$ to denote the sets 
of positive and negative roots of $\widetilde{\g}$, respectively. We use 
$\widehat{Q}$ to denote the root lattice of $\widetilde{\g}$, the integer span 
of the roots of $\widetilde{\g}$. Note that
\begin{equation*}
 \widehat{Q}=Q\oplus\Z\mathbf{d'}.
\end{equation*}
We use $\widehat{Q}_+$ to denote the non-negative integral span of the simple 
roots of $\widetilde{\g}$. Then the partial order on $\mathfrak{h}^*$ extends to 
a partial order $\prec$ on $\mathfrak{H}^*$ given by $\alpha\prec\beta$ if and 
only if $\beta-\alpha\in\widehat{Q}_+$.

We extend the bilinear form $\langle\cdot,\cdot\rangle$ on $\mathfrak{h}^*$ to a 
nondegenerate symmetric form on $\mathfrak{H}^*$ by setting
\begin{equation}\label{kdotd}
 \langle\mathbf{k}',\mathbf{d}'\rangle=1
\end{equation}
and
\begin{equation*}
 \langle\mathbf{k}',\mathbf{k}'\rangle=\langle\mathbf{d}',\mathbf{d}'\rangle 
=\langle\mathbf{k}',\alpha_i\rangle=\langle\mathbf{d}',\alpha_i\rangle=0
\end{equation*}
for $i\geq 1$. Now we use $\widehat{P}$ to denote the \textit{weights} of 
$\widetilde{\g}$, the elements $\Lambda$ of $\mathfrak{H}^*$ such that
\begin{equation*}
 \dfrac{2\langle\Lambda,\alpha_i\rangle}{\langle\alpha_i,\alpha_i\rangle}\in\Z
\end{equation*}
for all $0\leq i\leq\mathrm{rank}\,\g$. Note that
\begin{equation*}
 \widehat{P}=P\oplus\Z\mathbf{k}'\oplus\C\mathbf{d'}.
\end{equation*}
We say that a weight $\Lambda\in\widehat{P}$ is \textit{dominant integral} if 
\begin{equation*} 
\dfrac{2\langle\Lambda,\alpha_i\rangle}{\langle\alpha_i,\alpha_i\rangle}
\in\mathbb{N}
\end{equation*}
for all $0\leq i\leq\mathrm{rank}\,\g$; we use $\widehat{P}_+$ to denote the 
dominant integral weights of $\widetilde{\g}$. Note that 
$\Lambda=\lambda+\ell\mathbf{k'}+h\mathbf{d'}$, where $\lambda\in P$ and 
$\ell\in\Z$, is dominant integral if and only if $\ell\in\mathbb{N}$ and 
$\lambda$ is a dominant integral weight of $\g$ such that 
$\langle\lambda,\theta\rangle\leq\ell$.

Recalling the dominant integral weight $\rho$ of $\g$ and the dual Coxeter 
number $h^\vee$, we have:
\begin{propo}
 If we set $\widehat{\rho}=\rho+h^{\vee}\mathbf{k}'$, then 
$\frac{2\langle\widehat{\rho},\alpha_i\rangle}{\langle\alpha_i,\alpha_i\rangle}
=1$ for all simple roots $\alpha_i$ of $\widetilde{\g}$.
\end{propo}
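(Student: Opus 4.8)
The plan is to verify the defining identity $\frac{2\langle\widehat{\rho},\alpha_i\rangle}{\langle\alpha_i,\alpha_i\rangle}=1$ directly on each simple root of $\widetilde{\g}$, exploiting that the simple roots split into two types: the finite simple roots $\alpha_1,\dots,\alpha_{\mathrm{rank}\,\g}$ coming from $\g$, and the single affine simple root $\alpha_0=-\theta+\mathbf{d}'$. Since $\widehat{\rho}=\rho+h^\vee\mathbf{k}'$ is by construction a sum of a piece in $\mathfrak{h}^*$ and a multiple of $\mathbf{k}'$, the computation reduces to bilinearity together with the pairings of $\mathbf{k}'$ and $\mathbf{d}'$ recorded in \eqref{kdotd} and the lines following it.

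For a finite simple root $\alpha_i$ with $i\geq 1$, I would first note that $\langle\mathbf{k}',\alpha_i\rangle=0$, so that $\langle\widehat{\rho},\alpha_i\rangle=\langle\rho,\alpha_i\rangle$. The identity then follows immediately from the property $\frac{2\langle\rho,\alpha_i\rangle}{\langle\alpha_i,\alpha_i\rangle}=1$ of $\rho$ recalled above, since the extended form on $\mathfrak{H}^*$ restricts to the original form on $\mathfrak{h}^*$ and hence assigns $\alpha_i$ the same length.

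The only step with any content is the affine root $\alpha_0$. Here I would expand $\langle\widehat{\rho},\alpha_0\rangle=\langle\rho+h^\vee\mathbf{k}',-\theta+\mathbf{d}'\rangle$ by bilinearity. Three of the four resulting terms vanish: $\langle\rho,\mathbf{d}'\rangle=0$ and $\langle\mathbf{k}',\theta\rangle=0$ because $\rho,\theta\in\mathfrak{h}^*$ are orthogonal to both $\mathbf{k}'$ and $\mathbf{d}'$, while $\langle\mathbf{k}',\mathbf{d}'\rangle=1$ by \eqref{kdotd}. Using $\langle\rho,\theta\rangle=h^\vee-1$ from \eqref{thetadotrho}, this gives $\langle\widehat{\rho},\alpha_0\rangle=-(h^\vee-1)+h^\vee=1$. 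Similarly, expanding $\langle\alpha_0,\alpha_0\rangle=\langle-\theta+\mathbf{d}',-\theta+\mathbf{d}'\rangle$ and using $\langle\theta,\theta\rangle=2$ together with $\langle\theta,\mathbf{d}'\rangle=\langle\mathbf{d}',\mathbf{d}'\rangle=0$ yields $\langle\alpha_0,\alpha_0\rangle=2$, so that $\frac{2\langle\widehat{\rho},\alpha_0\rangle}{\langle\alpha_0,\alpha_0\rangle}=1$ as well. I expect no genuine obstacle; the argument is essentially a bookkeeping exercise, and the single nontrivial input is the relation \eqref{thetadotrho} between $\langle\rho,\theta\rangle$ and the dual Coxeter number, which is exactly what forces the affine root to contribute the value $1$.
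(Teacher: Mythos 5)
Your proposal is correct and follows essentially the same route as the paper: for $i\geq 1$ the claim reduces to the corresponding property of $\rho$, and for $\alpha_0$ one expands $\langle\rho+h^\vee\mathbf{k}',-\theta+\mathbf{d}'\rangle=-(h^\vee-1)+h^\vee=1$ using \eqref{thetadotrho} and \eqref{kdotd}. The only difference is that you also verify $\langle\alpha_0,\alpha_0\rangle=2$ by direct expansion, a fact the paper states without computation; this is a harmless addition.
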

\begin{proof}
 It is clear that 
$\frac{2\langle\widehat{\rho},\alpha_i\rangle}{\langle\alpha_i,\alpha_i\rangle}
=1$ for $i\geq 1$ since $\rho$ has this property. Since 
$\langle\alpha_0,\alpha_0\rangle=2$, we just need to check 
$\langle\widehat{\rho},\alpha_0\rangle=1$. In fact,
\begin{equation*} 
\langle\widehat{\rho},\alpha_0\rangle=\langle\rho+h^{\vee}\mathbf{k}',
-\theta+\mathbf{d}'\rangle=-(h^{\vee}-1)+h^{\vee}=1
\end{equation*}
by \eqref{thetadotrho} and \eqref{kdotd}.
\end{proof}

We now recall the Weyl group of $\widetilde{\g}$. For each $0\leq 
i\leq\mathrm{rank}\,\g$, we define the simple reflection $r_i$ on 
$\mathfrak{H}^*$ by
\begin{equation*} 
r_i(\alpha)=\alpha-\dfrac{2\langle\alpha,\alpha_i\rangle}{\langle\alpha_i,
\alpha_i\rangle}\alpha_i
\end{equation*}
for $\alpha\in\mathfrak{H}^*$. Then the Weyl group $W$ is the group of 
isometries of $\mathfrak{H}^*$ generated by the reflections $r_i$. For an 
element $w\in W$, the \textit{length} $l(w)$ is the minimal number of generators 
in any expression of $w$ as a product of the generators $r_i$. Here we recall 
some standard facts about the Weyl group of $\widetilde{\g}$ that we shall need:
\begin{propo}\label{weylgrouppropo1}
For $w\in W$, let $\Phi_w\subseteq\Delta_+$ denote the positive roots sent by 
$w$ to negative roots. Then for any $w\in W$, $l(w)=\vert\Phi_w\vert$ and 
$\widehat{\rho}-w(\widehat{\rho})=\sum_{\alpha\in\Phi_w} \alpha$.
\end{propo}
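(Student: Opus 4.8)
The plan is to prove both assertions simultaneously by induction on the length $l(w)$, using a reduced expression $w=r_{i_1}\cdots r_{i_k}$ together with two elementary facts about the simple reflections. The first is that each $r_i$ preserves the set $\Delta_+\setminus\{\alpha_i\}$ while sending $\alpha_i$ to $-\alpha_i$: this holds because $r_i$ alters only the $\alpha_i$-coefficient of a root, so a positive root other than $\alpha_i$ still has a strictly positive coefficient on some $\alpha_j$ with $j\neq i$ and hence remains positive. The second is that $r_i(\widehat{\rho})=\widehat{\rho}-\alpha_i$, which is immediate from the previous Proposition since $\frac{2\langle\widehat{\rho},\alpha_i\rangle}{\langle\alpha_i,\alpha_i\rangle}=1$. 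The base case $w=e$ is trivial, with $\Phi_e=\emptyset$ and $\widehat{\rho}-\widehat{\rho}=0$.

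For the equality $l(w)=\vert\Phi_w\vert$, the point is the recursive behavior of the inversion set under right multiplication by a simple reflection. If $l(wr_i)>l(w)$, then $w(\alpha_i)\in\Delta_+$, so $\alpha_i\notin\Phi_w$; using the first fact above one checks directly that
\begin{equation*}
\Phi_{wr_i}=r_i(\Phi_w)\sqcup\{\alpha_i\},
\end{equation*}
a disjoint union of subsets of $\Delta_+$, whence $\vert\Phi_{wr_i}\vert=\vert\Phi_w\vert+1$. Building $w$ up one reflection at a time along a reduced word then gives $\vert\Phi_w\vert\leq l(w)$ directly, and for the reverse inequality each of the $l(w)$ partial products must strictly increase the inversion count. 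The genuinely non-trivial input here, and the main obstacle, is the length criterion $l(wr_i)>l(w)\iff w(\alpha_i)\in\Delta_+$, equivalently the statement that a reduced word of $w$ produces exactly $l(w)$ distinct positive roots that $w$ sends to negative roots. This is the exchange/deletion property of the Weyl group of $\widetilde{\g}$, which I would invoke from the standard references on Kac--Moody Weyl groups (\cite{K}, \cite{MP}); everything else is bookkeeping with the sign rule above.

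Granting this, the $\widehat{\rho}$ identity comes out of the same reduced-word analysis. Telescoping the relation $r_i(\widehat{\rho})=\widehat{\rho}-\alpha_i$ along $w=r_{i_1}\cdots r_{i_k}$ (applying the reflections innermost first) yields
\begin{equation*}
\widehat{\rho}-w(\widehat{\rho})=\sum_{j=1}^{k} r_{i_1}\cdots r_{i_{j-1}}(\alpha_{i_j}),
\end{equation*}
and the very same reduced-word description that identified the inversion set in the length computation shows that the $k$ roots appearing on the right are distinct elements of $\Delta_+$ and are precisely the positive roots made negative, i.e. the inversion set $\Phi_w$ (one verifies $r_i(\Phi_w)\sqcup\{\alpha_i\}$ is exactly the set obtained by appending the new telescoping term). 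Hence $\widehat{\rho}-w(\widehat{\rho})=\sum_{\alpha\in\Phi_w}\alpha$, as claimed. I expect no difficulty in this last part beyond carefully matching the telescoped summands with the inversion-set recursion; the only real work in the whole proposition is the length criterion quoted above.
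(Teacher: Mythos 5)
The paper offers no proof of this proposition at all --- it is recalled as a standard fact, with \cite{L}, \cite{K}, \cite{MP}, \cite{C} cited at the start of Subsection 2.2 --- so there is no argument of the paper's to compare yours against, and supplying the standard inductive proof, with the length criterion $l(wr_i)>l(w)\Leftrightarrow w(\alpha_i)\in\Delta_+$ imported from \cite{K} or \cite{MP}, is a reasonable way to proceed. Your two preliminary facts are correct, the recursion $\Phi_{wr_i}=r_i(\Phi_w)\sqcup\lbrace\alpha_i\rbrace$ (valid when $w(\alpha_i)\in\Delta_+$) is correct, and together with the length criterion these do prove $l(w)=\vert\Phi_w\vert$.

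The proof of the $\widehat{\rho}$-identity, however, fails at the final matching step, and the failure is not mere bookkeeping: the two recursions you propose to identify are genuinely different. Passing from $w$ to $wr_i$, the telescoping recursion leaves the previous summands untouched and appends the new term $w(\alpha_i)$, since $\widehat{\rho}-wr_i(\widehat{\rho})=\bigl(\widehat{\rho}-w(\widehat{\rho})\bigr)+w(\alpha_i)$; the inversion-set recursion instead replaces the old set by its image under $r_i$ and appends $\alpha_i$. Consequently the prefix products $r_{i_1}\cdots r_{i_{j-1}}(\alpha_{i_j})$ produced by telescoping constitute $\Phi_{w^{-1}}$, not $\Phi_w$; it is the suffix products $r_{i_k}\cdots r_{i_{j+1}}(\alpha_{i_j})$ that constitute $\Phi_w$. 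The discrepancy is visible already for $w=r_ir_j$ with $\alpha_i,\alpha_j$ non-orthogonal simple roots (adjacent nodes exist since the Dynkin diagram of $\widetilde{\g}$ is connected): one checks $w(\alpha_j)=-r_i(\alpha_j)$ and $w(r_j(\alpha_i))=-\alpha_i$ are negative, so by the first part $\Phi_w=\lbrace\alpha_j,\,r_j(\alpha_i)\rbrace$, whose sum is $\alpha_i+\alpha_j-\frac{2\langle\alpha_i,\alpha_j\rangle}{\langle\alpha_j,\alpha_j\rangle}\alpha_j$, while telescoping gives $\widehat{\rho}-w(\widehat{\rho})=\alpha_i+r_i(\alpha_j)=\alpha_i+\alpha_j-\frac{2\langle\alpha_i,\alpha_j\rangle}{\langle\alpha_i,\alpha_i\rangle}\alpha_i$; these differ whenever $\langle\alpha_i,\alpha_j\rangle\neq 0$. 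So with $\Phi_w$ literally as defined in the statement, the identity $\widehat{\rho}-w(\widehat{\rho})=\sum_{\alpha\in\Phi_w}\alpha$ is false, and your claim that ``$r_i(\Phi_w)\sqcup\lbrace\alpha_i\rbrace$ is exactly the set obtained by appending the new telescoping term'' is precisely the false step. What is true --- and what your telescoping does prove, once you identify the prefix products as $\Phi_{w^{-1}}$ by applying your first-part recursion to the reduced expression $w^{-1}=r_{i_k}\cdots r_{i_1}$ --- is $\widehat{\rho}-w(\widehat{\rho})=\sum_{\alpha\in\Phi_{w^{-1}}}\alpha$. (The same $w\leftrightarrow w^{-1}$ slip is present in the proposition as printed; it is harmless for the paper, because the proof of Theorem \ref{weightstheo} uses only that $\widehat{\rho}-w(\widehat{\rho})$ is a sum of $l(w)$ distinct positive roots together with $l(w)=l(w^{-1})$. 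But a proof must either establish the corrected identity or redefine $\Phi_w$ as the set of positive roots sent to negative roots by $w^{-1}$.)
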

 
\begin{propo}\label{weylgrouppropo2}
The Weyl group $W$ preserves the set of weights $\widehat{P}$, and for any 
$\Lambda\in\widehat{P}$, there is some $w\in W$ such that 
$w(\Lambda)\in\widehat{P}_+$.
\end{propo}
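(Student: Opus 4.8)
The plan is to prove the two assertions separately, reducing the first to a short computation with Cartan integers and the second to a terminating ``raising'' algorithm controlled by an energy bound.

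For the first assertion, since $W$ is generated by the simple reflections $r_i$ and each $r_i$ is an involution, it suffices to show $r_i(\widehat{P})\subseteq\widehat{P}$ for every $i$. Writing $\langle\Lambda,\alpha_i^\vee\rangle=\frac{2\langle\Lambda,\alpha_i\rangle}{\langle\alpha_i,\alpha_i\rangle}$, I would compute for $\Lambda\in\widehat{P}$ and any simple root $\alpha_j$
\begin{equation*}
\langle r_i(\Lambda),\alpha_j^\vee\rangle=\langle\Lambda,\alpha_j^\vee\rangle-\langle\Lambda,\alpha_i^\vee\rangle\,\langle\alpha_i,\alpha_j^\vee\rangle,
\end{equation*}
and observe that $\langle\Lambda,\alpha_j^\vee\rangle$ and $\langle\Lambda,\alpha_i^\vee\rangle$ are integers by the definition of $\widehat{P}$, so the point to check is that the Cartan integers $\langle\alpha_i,\alpha_j^\vee\rangle$ are integers. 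For $i,j\geq 1$ these are the entries of the finite Cartan matrix of $\g$, while the entries involving the index $0$ follow from $\alpha_0=-\theta+\mathbf{d}'$, $\langle\alpha_0,\alpha_0\rangle=\langle\theta,\theta\rangle=2$, and \eqref{kdotd}, since $\theta$ is a root of $\g$ and hence pairs integrally with the coroots. Thus $r_i(\Lambda)\in\widehat{P}$, giving $W(\widehat{P})=\widehat{P}$.

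For the second assertion I would run the standard raising algorithm: if $\Lambda$ is not dominant, choose $i$ with $\langle\Lambda,\alpha_i^\vee\rangle<0$ and replace $\Lambda$ by $r_i(\Lambda)=\Lambda-\langle\Lambda,\alpha_i^\vee\rangle\alpha_i$, which satisfies $\Lambda\prec r_i(\Lambda)$. The whole difficulty is to show this process terminates. Here I would use an energy bound available because the weights relevant to standard modules have positive level $\ell=\langle\Lambda,\mathbf{d}'\rangle$. Writing $w(\Lambda)=\lambda_w+\ell\mathbf{k}'+h_w\mathbf{d}'$ with $\lambda_w\in\mathfrak{h}^*$, the level $\ell$ is fixed along the orbit because $W$ fixes $\delta=\mathbf{d}'$ and preserves the form; invariance of $\langle\cdot,\cdot\rangle$ then gives
\begin{equation*}
\langle\lambda_w,\lambda_w\rangle+2\ell\,h_w=\langle\Lambda,\Lambda\rangle,
\end{equation*}
so that, since the form is positive on the real span of the finite weights, $\mathrm{Re}(h_w)\leq \mathrm{Re}\langle\Lambda,\Lambda\rangle/(2\ell)$ is bounded above.

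Finally I would combine this bound with the bookkeeping of the algorithm. Since $\alpha_0$ has $\mathbf{d}'$-coefficient $1$ while $\alpha_j$ for $j\geq 1$ has $\mathbf{d}'$-coefficient $0$, each application of $r_0$ strictly increases $\mathrm{Re}(h)$ by the positive integer $-\langle\Lambda,\alpha_0^\vee\rangle$, while $r_1,\dots,r_{\mathrm{rank}\,\g}$ leave $\mathrm{Re}(h)$ unchanged. The upper bound then forces $r_0$ to be used only finitely often; between consecutive uses of $r_0$ only the finite reflections act, and these move the finite part $\lambda_w$ within a single (finite) orbit of the Weyl group of $\g$, where the raising algorithm terminates by finiteness. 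Hence the full algorithm terminates and produces $w\in W$ with $w(\Lambda)\in\widehat{P}_+$. The main obstacle is precisely this termination step, and the key idea is that positivity of the level converts the lower bound $\langle\lambda_w,\lambda_w\rangle\geq 0$ into an upper bound on the energy $\mathrm{Re}(h_w)$, which is monotone along the algorithm; this is the concrete form of the statement that positive-level weights lie in the Tits cone (cf.\ \cite{K}, \cite{MP}).
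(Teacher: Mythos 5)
The paper itself offers no proof of this proposition: it is recalled as a standard fact about affine Weyl groups, with proofs deferred to the references \cite{L}, \cite{K}, \cite{MP}, \cite{C} cited at the start of Subsection \ref{sub:gtilde}. So your attempt can only be measured against the standard argument, and that is essentially what you reproduce: integrality of the affine Cartan integers for the first claim, and, for the second, the raising algorithm whose termination is controlled by positivity of the level (the Tits-cone argument). Both halves are sound. For the first, the reduction to $\langle\alpha_i,\alpha_j^\vee\rangle\in\Z$ is correct, and the entries involving $\alpha_0$ do follow from $\alpha_0=-\theta+\mathbf{d}'$ together with the fact that $\theta$ pairs integrally with coroots, since the $\mathbf{d}'$ summand pairs to zero with every simple root. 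For the second, the identity $\langle\lambda_w,\lambda_w\rangle+2\ell h_w=\langle\Lambda,\Lambda\rangle$ is right, positive-definiteness of the form on the real span of $P$ does bound $\mathrm{Re}(h_w)$ above when $\ell>0$, and each $r_0$-step raises $\mathrm{Re}(h_w)$ by a positive integer while the finite reflections fix it, so $r_0$ can occur only finitely often. One point you should make explicit: ``terminates by finiteness'' for the runs of finite reflections needs the standard supplement that each raising step $\lambda_w\mapsto\lambda_w+\vert n\vert\alpha_j$ strictly increases $\langle\lambda_w,\rho\rangle$, so the process cannot revisit a point of the (finite) orbit and hence must stop; finiteness of the orbit alone would not rule out cycling.

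The one substantive criticism concerns the hypothesis you quietly slipped in, and it should be surfaced, because it is not optional: your energy bound uses $\ell=\langle\Lambda,\mathbf{d}'\rangle>0$, and without some such restriction the proposition as printed is false. The level is constant on $W$-orbits (as you observe), while every element of $\widehat{P}_+$ satisfies $\frac{2\langle\Lambda,\alpha_0\rangle}{\langle\alpha_0,\alpha_0\rangle}=\ell-\langle\lambda,\theta\rangle\geq 0$ with $\lambda\in P_+$, hence has $\ell\geq\langle\lambda,\theta\rangle\geq 0$; thus the orbit of, say, $\Lambda=-\mathbf{k}'$ never meets $\widehat{P}_+$, and the same failure occurs for level-zero weights with nonzero finite part. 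So no proof of the literal statement ``for any $\Lambda\in\widehat{P}$'' can exist. What you have proved is the positive-level case, and that is exactly what the paper needs: Proposition \ref{weylgrouppropo2} is invoked only once, in the proof of Theorem \ref{weightstheo}, and there it is applied to $\Lambda'+\widehat{\rho}$, whose level is $\ell+h^\vee>0$. State the restriction $\langle\Lambda,\mathbf{d}'\rangle>0$ as a hypothesis up front rather than inside the argument, and your proof is a complete and correct justification of everything the paper uses this proposition for.
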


Now we consider $\widetilde{\g}$-modules. For each $\Lambda\in\widehat{P}_+$, 
there is a unique irreducible standard $\widetilde{\g}$-module $L(\Lambda)$ with 
highest weight $\Lambda$. Note that by \eqref{L0comm} and \eqref{dcomm}, for any 
$\lambda\in P_+$ and $\ell\in\C$, the $\ghat$-modules $V_{\ghat}(\ell,\lambda)$ 
and $L_{\ghat}(\ell,\lambda)$ become $\widetilde{\g}$-modules on which 
$\mathbf{d}$ acts as $-L(0)$. In particular, when $\ell\in\mathbb{N}$ and 
$\lambda\in P_+$ satisfies $\langle\lambda,\theta\rangle\leq\ell$, 
$L_{\ghat}(\ell,\lambda)\cong L(\Lambda)$ where
\begin{equation*}
 \Lambda=\lambda+\ell\mathbf{k}'-h_{\lambda,\ell}\mathbf{d}'\in\widehat{P}_+.
\end{equation*}
For a dominant integral weight $\Lambda\in\widehat{P}_+$, the weights of 
$L(\Lambda)$ lie in $\Lambda-\widehat{Q}_+$. Moreover, the Weyl group $W$ 
preserves the weights of any standard $\widetilde{\g}$-module. Thus we obtain:
\begin{propo}\label{weylstanmod}
 If $\Lambda'$ is a weight of $L(\Lambda)$ and $w\in W$, then 
$w(\Lambda')\prec\Lambda$.
\end{propo}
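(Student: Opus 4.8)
The plan is to derive the statement directly from the two structural facts recalled in the paragraph immediately preceding the proposition, namely that every weight of $L(\Lambda)$ lies in $\Lambda-\widehat{Q}_+$, and that the Weyl group $W$ permutes the set of weights of any standard $\widetilde{\g}$-module. The key observation is that both ingredients are already in hand, so the argument is a short combination of them rather than a genuinely new computation.

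First I would invoke the $W$-invariance of the weight set: since $\Lambda'$ is a weight of $L(\Lambda)$ and $w\in W$, the element $w(\Lambda')$ is again a weight of $L(\Lambda)$. Then I would apply the bound on the weights of a standard module: every weight $\mu$ of $L(\Lambda)$ satisfies $\Lambda-\mu\in\widehat{Q}_+$, which by the definition of the partial order $\prec$ on $\mathfrak{H}^*$ means exactly that $\mu\prec\Lambda$. Taking $\mu=w(\Lambda')$ then yields $w(\Lambda')\prec\Lambda$, which is the desired conclusion.

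There is no real obstacle here; the substance is entirely contained in the two standard facts about integrable highest-weight modules for Kac--Moody algebras, for which one may cite references such as \cite{K} and \cite{MP}. The only point requiring a little care is the logical order of the two steps together with the direction of the partial order: one must first recognize $w(\Lambda')$ as a weight of $L(\Lambda)$ and only afterward apply the weight bound to it, since the bound $\mu\prec\Lambda$ holds for weights $\mu$ and is not in general preserved by the $W$-action before that recognition has been made.
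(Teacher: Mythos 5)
Your proof is correct and is exactly the argument the paper intends: the paper states this proposition as an immediate consequence ("Thus we obtain") of the two facts recalled just before it, namely that the weights of $L(\Lambda)$ lie in $\Lambda-\widehat{Q}_+$ and that $W$ preserves the set of weights of a standard module. Your ordering of the two steps—first recognizing $w(\Lambda')$ as a weight of $L(\Lambda)$, then applying the bound—is precisely the intended combination, so there is nothing to add.
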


We will need the following theorem on the weights of of irreducible 
$\widetilde{\g}$-modules. Suppose that 
$\Lambda=\lambda+\ell\mathbf{k}'-h_{\lambda,\ell}\mathbf{d}'$ is a dominant 
integral weight of $\widetilde{\g}$, where $\ell\in\N$ and $\lambda\in P_+$ 
satisfies $\langle\lambda,\theta\rangle\leq\ell$.
\begin{theo}\label{weightstheo}
If $\Lambda'=\lambda'+\ell\mathbf{k}'-(h_{\lambda',\ell}-m)\mathbf{d'}$ is a 
weight of $L(\Lambda)$ such that $\lambda'\in P_+$ and $m\in\mathbb{N}$, then 
$\Lambda'=\Lambda$.
\end{theo}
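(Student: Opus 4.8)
The plan is to translate the weight condition into a comparison of the $\langle\cdot,\cdot\rangle$-lengths of $\Lambda+\widehat{\rho}$ and $\Lambda'+\widehat{\rho}$. The guiding observation is that the conformal weight $h_{\lambda,\ell}$ of \eqref{hlambdal} is precisely the value that minimizes $\langle\Lambda+\widehat{\rho},\Lambda+\widehat{\rho}\rangle$. First I would compute the two norms. Writing $\Lambda+\widehat{\rho}=(\lambda+\rho)+(\ell+h^{\vee})\mathbf{k}'-h_{\lambda,\ell}\mathbf{d}'$ and using that $\mathbf{k}',\mathbf{d}'$ are isotropic and orthogonal to $\mathfrak{h}^*$ while $\langle\mathbf{k}',\mathbf{d}'\rangle=1$ by \eqref{kdotd}, I obtain
\[
\langle\Lambda+\widehat{\rho},\Lambda+\widehat{\rho}\rangle=\langle\lambda+\rho,\lambda+\rho\rangle-2(\ell+h^{\vee})h_{\lambda,\ell}=\langle\rho,\rho\rangle,
\]
the last step being \eqref{hlambdal} rewritten as $2(\ell+h^{\vee})h_{\lambda,\ell}=\langle\lambda,\lambda+2\rho\rangle=\langle\lambda+\rho,\lambda+\rho\rangle-\langle\rho,\rho\rangle$. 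The same computation for $\Lambda'=\lambda'+\ell\mathbf{k}'-(h_{\lambda',\ell}-m)\mathbf{d}'$ yields
\[
\langle\Lambda'+\widehat{\rho},\Lambda'+\widehat{\rho}\rangle=\langle\rho,\rho\rangle+2(\ell+h^{\vee})m.
\]
Since $\ell\in\N$, $h^{\vee}$ is a positive integer, and $m\in\N$, this already gives $\langle\Lambda'+\widehat{\rho},\Lambda'+\widehat{\rho}\rangle\geq\langle\Lambda+\widehat{\rho},\Lambda+\widehat{\rho}\rangle$, with equality if and only if $m=0$.

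The crux is the reverse inequality: for any weight $\Lambda'$ of $L(\Lambda)$ one should have $\langle\Lambda'+\widehat{\rho},\Lambda'+\widehat{\rho}\rangle\leq\langle\Lambda+\widehat{\rho},\Lambda+\widehat{\rho}\rangle$, with equality only when $\Lambda'=\Lambda$. To prove this I would pass to the dominant conjugate. By Proposition \ref{weylgrouppropo2} choose $w\in W$ with $\mu:=w(\Lambda')\in\widehat{P}_+$. On the orbit $W\Lambda'$ the quantity $\langle\nu,\nu\rangle$ is constant, so $\langle\nu+\widehat{\rho},\nu+\widehat{\rho}\rangle$ is controlled by $\langle\nu,\widehat{\rho}\rangle$; since $\mu-\Lambda'\in\widehat{Q}_+$ (a dominant weight is maximal in its $W$-orbit, which follows from the reflection formula by induction on length as in Proposition \ref{weylgrouppropo1}) and $\langle\widehat{\rho},\alpha_i\rangle=\tfrac12\langle\alpha_i,\alpha_i\rangle>0$ for every simple root of $\widetilde{\g}$, I get $\langle\Lambda'+\widehat{\rho},\Lambda'+\widehat{\rho}\rangle\leq\langle\mu+\widehat{\rho},\mu+\widehat{\rho}\rangle$ with equality iff $\Lambda'=\mu$. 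On the other hand Proposition \ref{weylstanmod} applied to $w$ gives $\Lambda-\mu\in\widehat{Q}_+$, and for the two dominant integral weights with $\Lambda-\mu\in\widehat{Q}_+$ the difference $\langle\Lambda+\widehat{\rho},\Lambda+\widehat{\rho}\rangle-\langle\mu+\widehat{\rho},\mu+\widehat{\rho}\rangle=\langle\Lambda-\mu,\Lambda+\mu+2\widehat{\rho}\rangle$ is a non-negative integral combination of the quantities $\langle\alpha_i,\Lambda+\mu+2\widehat{\rho}\rangle$, each of which is strictly positive because $\Lambda,\mu$ are dominant and $\langle\alpha_i,2\widehat{\rho}\rangle=\langle\alpha_i,\alpha_i\rangle>0$; hence it is $\geq 0$ with equality iff $\mu=\Lambda$.

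Chaining the two inequalities proves the reverse inequality, and combining it with the first paragraph forces every inequality to be an equality: $m=0$, then $\Lambda'=\mu$, and $\mu=\Lambda$, so $\Lambda'=\Lambda$, as asserted. I expect the reverse norm inequality together with its sharp equality case to be the main obstacle; the two norm computations are routine once the extended form on $\mathfrak{H}^*$ and the definition of $h_{\lambda,\ell}$ are in hand, and the two positivity inputs I need—maximality of dominant weights in their orbits and $\langle\widehat{\rho},\alpha_i\rangle>0$—are exactly what Propositions \ref{weylgrouppropo1}, \ref{weylgrouppropo2}, and \ref{weylstanmod} supply. Alternatively, one may shortcut the second and third paragraphs by directly invoking the standard estimate $\langle\Lambda'+\widehat{\rho},\Lambda'+\widehat{\rho}\rangle\leq\langle\Lambda+\widehat{\rho},\Lambda+\widehat{\rho}\rangle$, with equality iff $\Lambda'=\Lambda$, for weights of an integrable highest-weight module (see \cite{K}), and then conclude $m=0$ and $\Lambda'=\Lambda$ from the displayed formulas above.
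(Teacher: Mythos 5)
Your proof is correct, but it takes a genuinely different route from the paper's. The shared core is the norm computation: your identity $\langle\Lambda'+\widehat{\rho},\Lambda'+\widehat{\rho}\rangle=\langle\rho,\rho\rangle+2(\ell+h^{\vee})m$ is exactly the computation $\langle\Lambda',\Lambda'+2\widehat{\rho}\rangle=2m(\ell+h^{\vee})$ that appears inside Lemma \ref{lhsame}. After that the arguments diverge. The paper conjugates the \emph{shifted} weight $\Lambda'+\widehat{\rho}$ into the dominant chamber, sets $\widetilde{\Lambda}=w^{-1}(\Lambda'+\widehat{\rho})-\widehat{\rho}$ (which is only ``almost dominant'': $\widetilde{\lambda}+\rho\in P_+$), and then works in explicit affine coordinates: Lemma \ref{lhsame} pins down the level and $\mathbf{d}'$-coefficient of $\widetilde{\Lambda}$, a second lemma proves the estimate $h_{\widetilde{\lambda},\ell}-h_{\lambda,\ell}<N$ for $N>0$ (using $\langle\widetilde{\lambda}-\lambda,\mu\rangle\leq N\langle\theta,\mu\rangle$ and the bound $\langle\lambda+\widetilde{\lambda},\theta\rangle\leq 2\ell+1$), forcing $N=m=0$; a further norm argument gives $\widetilde{\lambda}=\lambda$, and a final length argument via Proposition \ref{weylgrouppropo1} gives $w=1$. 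You instead reduce the whole theorem to the single classical inequality $\langle\Lambda'+\widehat{\rho},\Lambda'+\widehat{\rho}\rangle\leq\langle\Lambda+\widehat{\rho},\Lambda+\widehat{\rho}\rangle$, with equality iff $\Lambda'=\Lambda$, valid for every weight of an integrable highest-weight module --- the lemma underlying the Weyl--Kac character formula, which you could indeed simply cite from \cite{K} --- and you prove it by conjugating the \emph{unshifted} weight $\Lambda'$ to its dominant representative $\mu=w(\Lambda')$ and comparing twice: orbit maximality together with $\langle\widehat{\rho},\alpha_i\rangle=\tfrac{1}{2}\langle\alpha_i,\alpha_i\rangle>0$ gives $\langle\Lambda'+\widehat{\rho},\Lambda'+\widehat{\rho}\rangle\leq\langle\mu+\widehat{\rho},\mu+\widehat{\rho}\rangle$, while dominance of $\mu$ and $\Lambda$ together with $\Lambda-\mu\in\widehat{Q}_+$ (Proposition \ref{weylstanmod}) gives the second inequality, each with the sharp equality case. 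The inputs are the same (Propositions \ref{weylgrouppropo1}, \ref{weylgrouppropo2}, \ref{weylstanmod} and the $\widehat{\rho}$ normalization), but your packaging is shorter and exposes the theorem as an instance of a standard Kac--Moody fact, whereas the paper's is self-contained and stays in explicit affine coordinates throughout. One minor caution: you apply Proposition \ref{weylgrouppropo2} to $\Lambda'$ itself, of level $\ell$, whereas the paper applies it to $\Lambda'+\widehat{\rho}$, of level $\ell+h^{\vee}>0$; for affine algebras only positive-level weights can always be brought into the dominant chamber, so the shifted application is the safer one --- but this matters only when $\ell=0$, in which case $L(\Lambda)$ is the trivial module and the theorem is vacuous, so your argument is not actually harmed.
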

\begin{proof}
By Proposition \ref{weylgrouppropo2} there is a weight 
$\widetilde{\Lambda}\in\widehat{P}$ and an element $w\in W$ such that
 \begin{equation}\label{lamtdef}  
w^{-1}(\Lambda'+\widehat{\rho})=\widetilde{\Lambda}+\widehat{\rho}\in\widehat{P}
_+.
 \end{equation}
\begin{lemma}\label{lhsame}
 We have 
$\widetilde{\Lambda}=\widetilde{\lambda}+\ell\mathbf{k'}-(h_{\widetilde{\lambda}
,\ell}-m)\mathbf{d'}$ for some $\widetilde{\lambda}\in P$ which satisfies 
$\widetilde{\lambda}+\rho\in P_+$ and 
$\langle\widetilde{\lambda},\theta\rangle\leq\ell+1$.
\end{lemma}
\begin{proof}
Certainly we can write 
$\widetilde{\Lambda}=\widetilde{\lambda}+\widetilde{\ell}\mathbf{k}'-h\mathbf{d'
}$ for some $\widetilde{\lambda}\in P$, $\widetilde{\ell}\in\Z$, and $h\in\C$. 
First we observe that for any weight $\Lambda'\in\widehat{P}$ and any simple 
generator $r_i$ of $W$, the coefficient of $\mathbf{k}'$ in
\begin{equation*} 
r_i(\Lambda')=\Lambda'-\dfrac{2\langle\Lambda',\alpha_i\rangle}{\langle\alpha_i,
\alpha_i\rangle}\alpha_i
\end{equation*}
is the same as the coefficient of $\mathbf{k}'$ in $\Lambda'$ because 
$\alpha_i\in Q+\Z\mathbf{d'}$. Thus $\widetilde{\ell}=\ell$. Also, the fact that 
$\widetilde{\Lambda}+\widehat{\rho}\in\widehat{P}_+$ implies that 
$\widetilde{\lambda}+\rho\in P_+$ and
\begin{equation*}
 \langle\lambda+\rho,\theta\rangle\leq\ell+h^{\vee}.
\end{equation*}
By \eqref{thetadotrho}, this implies 
$\langle\widetilde{\lambda},\theta\rangle\leq\ell+1$.

It remains to show that $h=h_{\widetilde{\lambda},\ell}-m$. For this, we use the 
fact that
\begin{align*}
 \langle\widetilde{\Lambda},\widetilde{\Lambda}+2\widehat{\rho}\rangle & 
=\langle w^{-1}(\Lambda'+\widehat{\rho})-\widehat{\rho}, 
w^{-1}(\Lambda'+\widehat{\rho})+\widehat{\rho}\rangle\nonumber\\
 & =\langle 
w^{-1}(\Lambda'+\widehat{\rho}),w^{-1}(\Lambda'+\widehat{\rho}
)\rangle-\langle\widehat{\rho},\widehat{\rho}\rangle\nonumber\\
 & =\langle\Lambda',\Lambda'+2\widehat{\rho}\rangle
\end{align*}
because $w^{-1}$ is an isometry. Now,
\begin{align*}
 \langle\Lambda',\Lambda'+2\widehat{\rho}\rangle 
&=\langle\lambda'+\ell\mathbf{k}'-(h_{\lambda',\ell}-m)\mathbf{d}',
\lambda+2\rho+(\ell+2h^{\vee})\mathbf{k}'-(h_{\lambda',\ell}-m)\mathbf{d}
'\rangle\nonumber\\
 & 
=\langle\lambda',\lambda'+2\rho\rangle-2(\ell+h^{\vee})(h_{\lambda',\ell}
-m)=2m(\ell+h^\vee)
\end{align*}
by the definition of $h_{\lambda',\ell}$. Thus
\begin{equation*}
 \langle\widetilde{\Lambda},\widetilde{\Lambda}+2\widehat{\rho}\rangle 
=\langle\widetilde{\lambda},\widetilde{\lambda}+2\rho\rangle-2(\ell+h^{\vee}
)h=2m(\ell+h^\vee)
\end{equation*}
likewise, so
\begin{equation*} 
h=\dfrac{1}{2(\ell+h^{\vee})}(\langle\widetilde{\lambda},\widetilde{\lambda}
+2\rho\rangle-2m(\ell+h^\vee))=h_{\widetilde{\lambda},\ell}-m,
\end{equation*}
as desired.
\end{proof}
Continuing with the proof of the theorem, by Propositions \ref{weylgrouppropo1} 
and \ref{weylstanmod} we have
\begin{equation*} 
\widetilde{\Lambda}=w^{-1}(\Lambda')+w^{-1}(\widehat{\rho})-\widehat{\rho}
\prec\Lambda.
\end{equation*}
Set $N=h_{\widetilde{\lambda},\ell}-h_{\lambda,\ell}$. Since we know
\begin{equation}\label{lamminlamt} 
\Lambda-\widetilde{\Lambda}=\lambda-\widetilde{\lambda}+(h_{\widetilde{\lambda},
\ell}-h_{\lambda,\ell}-m)\mathbf{d'}=\lambda-\widetilde{\lambda}
+(N-m)\theta+(N-m)\alpha_0\in\widehat{Q}_+,
\end{equation}
it follows that $N-m\in\mathbb{N}$ (so that $N\in\mathbb{N}$ as well) and 
$\widetilde{\lambda}\prec\lambda+(N-m)\theta\prec\lambda+N\theta$. The following 
lemma then implies that $N=m=0$:
\begin{lemma}
 Suppose $\lambda,\widetilde{\lambda}$ are weights of $\g$ such that 
$\lambda+\widetilde{\lambda}+\rho\in P_+$, 
$\langle\lambda+\widetilde{\lambda},\theta\rangle\leq 2\ell+1$, and 
$\widetilde{\lambda}\prec\lambda+N\theta$ for some $N\in\mathbb{N}$. Then 
$h_{\widetilde{\lambda},\ell}-h_{\lambda,\ell}<N$ if $N>0$.
\end{lemma}
\begin{proof}
 Because $\widetilde{\lambda}-\lambda\prec N\theta$, we have 
$\langle\widetilde{\lambda}-\lambda,\mu\rangle\leq N\langle\theta,\mu\rangle$ 
whenever $\mu\in P_+$. In particular,
 \begin{equation*}
  \langle\widetilde{\lambda}-\lambda,\rho\rangle\leq 
N\langle\rho,\theta\rangle=N(h^{\vee}-1)
  \end{equation*}
 by \eqref{thetadotrho} and
 \begin{equation*}
  \langle\widetilde{\lambda}-\lambda,\lambda+\widetilde{\lambda}+\rho\rangle\leq 
N\langle\lambda+\widetilde{\lambda}+\rho,\theta\rangle\leq N(2\ell+h^{\vee}).
 \end{equation*}
Then we have
\begin{align*}
 h_{\widetilde{\lambda},\ell}-h_{\lambda,\ell} & 
=\dfrac{1}{2(\ell+h^{\vee})}\left(\langle\widetilde{\lambda},\widetilde{\lambda}
+2\rho\rangle-\langle\lambda,\lambda+2\rho\rangle\right)\nonumber\\ 
&=\dfrac{1}{2(\ell+h^{\vee})}\langle\widetilde{\lambda}-\lambda,
\lambda+\widetilde{\lambda}+2\rho\rangle\nonumber\\
 & \leq\dfrac{2(\ell+h^{\vee})-1}{2(\ell+h^{\vee})} N.
\end{align*}
Thus $h_{\widetilde{\lambda},\ell}-h_{\lambda,\ell}<N$ when $N>0$. 
\end{proof}

We have now proved that $\widetilde{\lambda}\prec\lambda$ and 
$h_{\widetilde{\lambda},\ell}=h_{\lambda,\ell}$. Thus 
$\lambda=\widetilde{\lambda}+\alpha$ for some $\alpha\in Q_+$ and
\begin{equation*} 
\langle\widetilde{\lambda},\widetilde{\lambda}+2\rho\rangle=\langle\widetilde{
\lambda}+\alpha,\widetilde{\lambda}+\alpha+2\rho\rangle,
\end{equation*}
or
\begin{equation*}
 \langle\alpha,\alpha\rangle+2\langle\widetilde{\lambda}+\rho,\alpha\rangle=0.
\end{equation*}
Since $\widetilde{\lambda}+\rho\in P_+$ by Lemma \ref{lhsame}, this means 
$\langle\alpha,\alpha\rangle=\langle\widetilde{\lambda}+\rho,\alpha\rangle =0$. 
Thus we must have $\alpha=0$ and $\widetilde{\lambda}=\lambda$. Then by 
\eqref{lamminlamt} we have $\widetilde{\Lambda}=\Lambda$.

It now follows from \eqref{lamtdef} that 
$w^{-1}(\Lambda'+\widehat{\rho})=\Lambda+\widehat{\rho}$, or
\begin{equation*}
 \Lambda'=w(\Lambda+\widehat{\rho})-\widehat{\rho}.
\end{equation*}
To conclude the proof of the theorem, we must show that $w= 1$. In fact, when 
$w\neq 1$, Proposition \ref{weylgrouppropo1} implies that
\begin{equation*} 
w^{-1}(\Lambda')=\Lambda+\widehat{\rho}-w^{-1}(\widehat{\rho})=\Lambda+\sum_{
\alpha\in\Phi_{w^{-1}}}\alpha\nprec\Lambda.
\end{equation*}
Since by assumption $\Lambda'$ is a weight of $L(\Lambda)$, this contradicts 
Proposition \ref{weylstanmod}; thus $w=1$ and $\Lambda'=\Lambda$. This completes 
the proof of the theorem.
\end{proof}

\section{Intertwining operators and tensor products of modules for a vertex 
operator algebra}

In this section we recall the notion of tensor product of modules for a vertex 
operator algebra from \cite{HL1}, \cite{HLZ3}. We first recall the notions of 
intertwining operator and $P(z)$-intertwining map, for $z\in\C^\times$, among a 
triple of modules for a vertex operator algebra.

\subsection{Intertwining operators and $P(z)$-intertwining maps}

In general for a vector space $V$, we use $V\lbrace x\rbrace$ to denote the 
space of formal series
\begin{equation*}
 V\lbrace x\rbrace =\left\lbrace\sum_{n\in\C} v_n x^n, \,\,v_n\in 
V\right\rbrace.
\end{equation*}
Now suppose that $V$ is a vertex operator algebra; we recall the notion of 
intertwining operator among a triple of $V$-modules (see \cite{FHL} or 
\cite{HL1}):
\begin{defi}
 Suppose $W_1$,  $W_2$ and $W_3$ are $V$-modules. An \textit{intertwining 
operator} of type $\binom{W_3}{W_1\,W_2}$ is a linear map
 \begin{eqnarray*}
\mathcal{Y}: W_1\otimes W_2&\to& W_3\{x\},
\\w_{(1)}\otimes w_{(2)}&\mapsto &\mathcal{Y}(w_{(1)},x)w_{(2)}=\sum_{n\in
{\mathbb C}}(w_{(1)})_n
w_{(2)}x^{-n-1}\in W_3\{x\}
\end{eqnarray*}
satisfying the following conditions:
\begin{enumerate}

\item  {\it Lower truncation}: For any $w_{(1)}\in W_1$, $w_{(2)}\in W_2$ and 
$n\in\mathbb{C}$,
\begin{equation}\label{log:ltc}
(w_{(1)})_{(n+m)}w_{(2)}=0\;\;\mbox{ for }\;m\in {\mathbb
N} \;\mbox{ sufficiently large.}
\end{equation}

\item The {\it Jacobi identity}:
\begin{align}\label{intwopjac}
 x^{-1}_0\delta \left(\frac{x_1-x_2}{x_0}\right) 
Y_{W_3}(v,x_1)\mathcal{Y}(w_{(1)},x_2) & - 
x^{-1}_0\delta\left(\frac{x_2-x_1}{-x_0}\right)\mathcal{Y}(w_{(1)},x_2)Y_{W_2}(v
,x_1)\nonumber\\
& =  x^{-1}_2\delta \left(\frac{x_1-x_0}{x_2}\right) 
\mathcal{Y}(Y_{W_1}(v,x_0)w_{(1)},x_2)
\end{align}
for $v\in V$ and $w_{(1)}\in W_1$.

\item The {\it $L(-1)$-derivative property:} for any
$w_{(1)}\in W_1$,
\begin{equation}\label{intwopderiv}
\mathcal{Y}(L(-1)w_{(1)},x)=\frac{d}{dx}\mathcal{Y}(w_{(1)},x).
\end{equation}
\end{enumerate}
\end{defi}
\begin{rema}
 We use $\mathcal{V}^{W_3}_{W_1 W_2}$ to denote the vector space of intertwining 
operators of type $\binom{W_3}{W_1\,W_2}$; the dimension of 
$\mathcal{V}^{W_3}_{W_1 W_2}$ is the corresponding \textit{fusion rule} 
$\mathcal{N}^{W_3}_{W_1 W_2}$.
\end{rema}
\begin{rema}
 The vertex operator $Y$ of $V$ is an intertwining operator of type 
$\binom{V}{V\,V}$, and for a $V$-module $W$, the vertex operator $Y_W$ is an 
intertwining operator of type $\binom{W}{V\,W}$.
\end{rema}

 Taking the coefficient of $x_0^{-1}$ in the Jacobi identity \eqref{intwopjac} 
yields the \textit{commutator formula}
 \begin{align}\label{commform}  
Y_{W_3}(v,x_1)\mathcal{Y}(w_{(1)},x_2)-\mathcal{Y}(w_{(1)},x_2)Y_{W_2}(v,
x_1)=\mathrm{Res}_{x_0} 
x_2^{-1}\delta\left(\dfrac{x_1-x_0}{x_2}\right)\mathcal{Y}(Y_{W_1}(v,x_0)w_{(1)}
,x_2),
 \end{align}
where the formal residue notation $\mathrm{Res}_{x_0}$ denotes the coefficient 
of $x_0^{-1}$. Similarly, taking the coefficient of $x_1^{-1}$ in 
\eqref{intwopjac} and then the coefficient of $x_0^{-n-1}$ for any $n\in\Z$ 
yields the \textit{iterate formula}
\begin{align}\label{itform}
 \mathcal{Y}(v_n w_{(1)},x_2)=\mathrm{Res}_{x_1} \left((x_1-x_2)^n 
Y_{W_3}(v,x_1)\mathcal{Y}(w_{(1)},x_2)-(-x_2+x_1)^n 
\mathcal{Y}(w_{(1)},x_2)Y_{W_2}(v,x_1)\right).
\end{align}
Together, the commutator and iterate formulas are equivalent to the Jacobi 
identity (see \cite{LL} for the special case that $\mathcal{Y}=Y_W$ for a 
$V$-module $W$).

We now recall from \cite{HL1}, \cite{HLZ3} the notion of $P(z)$-intertwining map 
for $z\in\C^\times$. First, if
\begin{equation*}
 W=\coprod_{n\in\mathbb{C}} W_{(n)}
\end{equation*}
is a $\mathbb{C}$-graded vector space (such as a $V$-module), then the 
\textit{algebraic completion} of $W$ is the vector space
\begin{equation*}
 \overline{W}=\prod_{n\in\mathbb{C}} W_{(n)}.
\end{equation*}
For any $n\in\mathbb{C}$, we use $\pi_n$ to denote the canonical projection 
$\overline{W}\rightarrow W_{(n)}$. Recall that the graded dual of a graded 
vector space $W$ is given by
\begin{equation*}
 W'=\coprod_{n\in\mathbb{C}} W_{(n)}^*;
\end{equation*}
it is easy to see that $\overline{W}=(W')^*$.
\begin{defi}
 Suppose $W_1$, $W_2$, and $W_3$ are $V$-modules and $z\in\mathbb{C}^\times$. A 
\textit{$P(z)$-intertwining map} of type $\binom{W_3}{W_1\,W_2}$ is a linear map
 \begin{equation*}
  I: W_1\otimes W_2\rightarrow \overline{W_3}
 \end{equation*}
satisfying the following conditions:
\begin{enumerate}
 \item \textit{Lower truncation}: For any $w_{(1)}\in W_1$, $w_{(2)}\in W_2$ 
and $n\in\mathbb{C}$,
\begin{equation}\label{map:ltc}
\pi_{n-m}(I(w_{(1)}\otimes w_{(2)})=0\;\;\mbox{ for }\;m\in {\mathbb
N} \;\mbox{ sufficiently large.}
\end{equation}
\item The {\it Jacobi identity}:
\begin{align}\label{intwmapjac}
 x^{-1}_0\delta \left(\frac{x_1-z}{x_0}\right) 
Y_{W_3}(v,x_1) I(w_{(1)}\otimes w_{(2)}) & - 
x^{-1}_0\delta\left(\frac{z-x_1}{-x_0}\right)I(w_{(1)}\otimes 
Y_{W_2}(v,x_1)w_{(2)})\nonumber\\
& =  z^{-1}\delta \left(\frac{x_1-x_0}{z}\right) 
I(Y_{W_1}(v,x_0)w_{(1)}\otimes w_{(2)})
\end{align}
for $v\in V$, $w_{(1)}\in W_1$, and $w_{(2)}\in W_2$.
\end{enumerate}
\end{defi}
\begin{rema}
We use $\mathcal{M}[P(z)]^{W_3}_{W_1 W_2}$, or simply $\mathcal{M}^{W_3}_{W_1 
W_2}$ if $z$ is clear, to denote the space of $P(z)$-intertwining maps of type 
$\binom{W_3}{W_1\,W_2}$.
\end{rema}

The definitions suggest that a $P(z)$-intertwining map is essentially an 
intertwining operator with the formal variable $x$ specialized to the complex 
number $z$. To make such a substitution precise, however, we must fix a 
branch of logarithm. We use log $z$ to denote the following branch of logarithm 
with a branch cut along the positive real axis:
\begin{equation*}
 \textrm{log}\,z=\textrm{log}\,\vert z\vert +i\,\textrm{arg}\,z,
\end{equation*}
where $0\leq\mathrm{arg}\,z<2\pi$. Then we define
\begin{equation*}
 \ell_p(z)=\mathrm{log}\,z+2\pi i p
\end{equation*}
for any $p\in\Z$. Then from \cite{HLZ3}, we have
\begin{propo}\label{opmapiso}
 For any $p\in\Z$, there is a linear isomorphism $\mathcal{V}^{W_3}_{W_1 
W_2}\rightarrow\mathcal{M}^{W_3}_{W_1 W_2}$ given by
 \begin{equation*}
  \mathcal{Y}\mapsto I_{\mathcal{Y},p},
 \end{equation*}
where
\begin{equation*}
 I_{\mathcal{Y},p}(w_{(1)}\otimes w_{(2)})=\mathcal{Y}(w_{(1)}, e^{\ell_p(z)})
\end{equation*}
for $w_{(1)}\in W_1$ and $w_{(2)}\in W_2$. The inverse is given by
\begin{equation*}
 I\mapsto\mathcal{Y}_{I,p}
\end{equation*}
where 
\begin{equation*}
 \mathcal{Y}_{I,p}(w_{(1)},x)w_{(2)}= 
\left(\dfrac{e^{\ell_p(z)}}{x}\right)^{-L(0)} 
I\left(\left(\dfrac{e^{\ell_p(z)}}{x}\right)^{L(0)} w_{(1)}\otimes 
\left(\dfrac{e^{\ell_p(z)}}{x}\right)^{L(0)} w_{(2)}\right)
\end{equation*}
for $w_{(1)}\in W_1$ and $w_{(2)}\in W_2$.
\end{propo}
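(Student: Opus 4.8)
The plan is to establish the stated correspondence by verifying four things: that $I_{\mathcal{Y},p}$ is a genuine $P(z)$-intertwining map whenever $\mathcal{Y}$ is an intertwining operator, that $\mathcal{Y}_{I,p}$ is a genuine intertwining operator whenever $I$ is a $P(z)$-intertwining map, that the two assignments are mutually inverse, and that they are linear. Linearity is immediate from the definitions, so the content lies in the first three points. The guiding idea is that $I_{\mathcal{Y},p}$ is the specialization of the formal variable $x$ to the complex number $e^{\ell_p(z)}$, while the $L(0)$-conjugation formula defining $\mathcal{Y}_{I,p}$ restores the dependence on $x$ from its value at that single point.

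First I would check that $I_{\mathcal{Y},p}$ lands in $\overline{W_3}$ and is a $P(z)$-map. Writing $\mathcal{Y}(w_{(1)},x)w_{(2)}=\sum_{n\in\C}(w_{(1)})_n w_{(2)}x^{-n-1}$, for homogeneous $w_{(1)},w_{(2)}$ of weights $h_1,h_2$ the $L(0)$-grading property of intertwining operators (a consequence of the Jacobi identity applied to the conformal vector $\omega$) forces $(w_{(1)})_n w_{(2)}$ to have weight $h_1+h_2-n-1$. Hence for each $N$ the projection $\pi_N I_{\mathcal{Y},p}(w_{(1)}\otimes w_{(2)})$ collects a single term, so the substitution $x\mapsto e^{\ell_p(z)}$ is unambiguous and defines an element of $\overline{W_3}$. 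Lower truncation \eqref{map:ltc} is then inherited directly from \eqref{log:ltc}. The Jacobi identity \eqref{intwmapjac} is obtained from the intertwining-operator Jacobi identity \eqref{intwopjac} by substituting $x_2=z$ using the branch $e^{\ell_p(z)}$; this is the delicate step, since one cannot naively replace a formal variable by a complex number inside the formal delta functions. I would justify it by applying both sides to fixed homogeneous vectors, projecting onto homogeneous components of $W_3$ so that all series become genuinely summable, and invoking the substitution lemmas for delta functions from the formal calculus.

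Next I would verify that $\mathcal{Y}_{I,p}$ is an intertwining operator and that the two assignments invert one another. The inverse relations are the cleanest to see. Setting $c=e^{\ell_p(z)}/x$ and applying the definition of $\mathcal{Y}_{I_{\mathcal{Y},p},p}$ to homogeneous $w_{(1)},w_{(2)}$ of weights $h_1,h_2$, one pulls the scalars $c^{h_1},c^{h_2}$ out of $I_{\mathcal{Y},p}$, applies $c^{-L(0)}$ term by term using that $(w_{(1)})_n w_{(2)}$ has weight $h_1+h_2-n-1$, and finds that the powers of $e^{\ell_p(z)}$ cancel, leaving exactly $\sum_n (w_{(1)})_n w_{(2)}x^{-n-1}=\mathcal{Y}(w_{(1)},x)w_{(2)}$; the reverse composition $I_{\mathcal{Y}_{I,p},p}=I$ follows by the same computation with $x$ specialized to $e^{\ell_p(z)}$, where $c$ collapses to $1$. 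That $\mathcal{Y}_{I,p}$ satisfies lower truncation and the Jacobi identity follows by running the substitution argument of the previous paragraph in reverse, and its $L(-1)$-derivative property \eqref{intwopderiv} is recovered by differentiating the defining $L(0)$-conjugation formula in $x$ and using the Virasoro relations together with the Jacobi identity of $I$ applied to $\omega$.

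The main obstacle, as indicated, is the precise justification that the formal-variable Jacobi identity passes to the $P(z)$-Jacobi identity under the substitution $x_2\mapsto z$. Substituting a nonzero complex number for a formal variable in expressions such as $z^{-1}\delta((x_1-x_0)/z)$ is only legitimate within the substitution formalism of Huang, Lepowsky, and Zhang, which fixes the branch $\ell_p(z)$ of the logarithm and tracks how the three-variable delta-function identities behave under this specialization. I expect most of the genuine effort to go into checking that this substitution is well defined on the relevant spaces of series and preserves each delta-function identity, after which the remaining verifications reduce to the bookkeeping sketched above.
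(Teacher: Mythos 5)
Your outline is correct and follows essentially the intended argument: the paper itself gives no proof of this proposition, quoting it directly from \cite{HLZ3}, and the steps you describe (componentwise specialization $x\mapsto e^{\ell_p(z)}$ made unambiguous by the weight formula for intertwining-operator modes, the $L(0)$-conjugation computation showing $\mathcal{Y}\mapsto I_{\mathcal{Y},p}$ and $I\mapsto\mathcal{Y}_{I,p}$ are mutually inverse, and the formal-calculus substitution lemmas needed to pass the delta-function Jacobi identity to the $P(z)$-Jacobi identity) are exactly the ones carried out there. One minor imprecision: the weight formula $\mathrm{wt}\,(w_{(1)})_n w_{(2)}=h_1+h_2-n-1$ requires the $L(-1)$-derivative property in addition to the Jacobi identity applied to $\omega$, but this does not affect the structure of your argument.
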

\begin{rema}
 From Proposition \ref{opmapiso}, the dimension of $\mathcal{M}[P(z)]^{W_3}_{W_1 
W_2}$ for any 
$z\in\mathbb{C}^\times$ is also the fusion rule $\mathcal{N}^{W_3}_{W_1 W_2}$
\end{rema}

\subsection{$P(z)$-tensor products}

We now recall from \cite{HL1} and \cite{HLZ3} the definition of a $P(z)$-tensor 
product of 
$V$-modules $W_1$ and $W_2$ using a universal property:
\begin{defi}
 For $z\in\C^\times$, a \textit{$P(z)$-tensor product} of $V$-modules $W_1$ and 
$W_2$ is a 
$V$-module $W_1\boxtimes_{P(z)} W_2$ equipped with a $P(z)$-intertwining map
\begin{equation*}
 \boxtimes_{P(z)}: W_1\otimes W_2\rightarrow \overline{W_1\boxtimes_{P(z)} W_2}
\end{equation*}
such that if $W_3$ is any $V$-module and $I$ is any $P(z)$-intertwining map of 
type $\binom{W_3}{W_1\,W_2}$, then there is a unique $V$-module homomorphism 
\begin{equation*}
 \eta: W_1\boxtimes_{P(z)} W_2\rightarrow W_3
\end{equation*}
satisfying
\begin{equation*}
\overline{\eta}\circ\boxtimes_{P(z)}=I.
\end{equation*}
\end{defi}
\begin{rema}
 We typically use the notation
 \begin{equation*}
  w_{(1)}\boxtimes_{P(z)} w_{(2)}=\boxtimes_{P(z)}(w_{(1)}\otimes w_{(2)})
 \end{equation*}
for $w_{(1)}\in W_1$, $w_{(2)}\in W_2$.
\end{rema}

If $V$ is a suitable vertex operator algebra, then $P(z)$-tensor products of 
$V$-modules always exist. In particular, $P(z)$-tensor products exist when $V$ 
is finitely reductive in the sense of \cite{HLZ3}:
\begin{defi}
 A vertex operator algebra $V$ is \textit{finitely reductive} if:
 \begin{enumerate}
  \item Every $V$-module is completely reducible.
  \item There are finitely many equivalence classes of irreducible $V$-modules.
  \item All fusion rules for triples of $V$-modules are finite.
 \end{enumerate}
\end{defi}
\begin{exam} 
When $\ell\in\mathbb{N}$, the vertex operator algebra $\imzero$ is finitely 
reductive (\cite{FZ}).
\end{exam}
Suppose tensor products of $V$-modules exist. In order to give the category 
$V-\mathbf{mod}$ a tensor category structure, we must choose a specific tensor 
product bifunctor, say $\boxtimes_{P(1)}$. Under suitable additional conditions, 
$V-\mathbf{mod}$ with the tensor product $\boxtimes_{P(1)}$ becomes a braided 
tensor category (see \cite{HL1}-\cite{HL3}, \cite{H1} or 
\cite{HLZ1}-\cite{HLZ8}). We will discuss the associativity isomorphisms for 
this tensor category below.

\section{The Zhu's algebra of a vertex operator algebra and its applications}

In this section we recall from \cite{Z} that the category of modules for a 
suitable vertex operator algebra $V$ is equivalent to the category of 
finite-dimensional modules for the Zhu's algebra $A(V)$ of $V$. We also recall 
the connection given in \cite{FZ}, \cite{Li2} between $A(V)$ and 
intertwining operators among $V$-modules, and apply these results to the vertex 
operator algebra $\imzero$ when $\ell\in\mathbb{N}$.

\subsection{Zhu's algebra and an equivalence of categories}

Here we recall the Zhu's algebra of a vertex operator algebra $V$ from \cite{Z}; 
see also \cite{FZ}. The vertex operator algebra $V$ has a product $*$ given by
\begin{equation*}
 u*v=\mathrm{Res}_x\, x^{-1} Y((1+x)^{L(0)}u,x)v
\end{equation*}
for $u,v\in V$. We also define the subspace $O(V)\subseteq V$ as the linear 
span of elements
\begin{equation*}
 \mathrm{Res}_x\, x^{-2} Y((1+x)^{L(0)}u,x)v
\end{equation*}
for $u,v\in V$. Then $*$ is a well-defined product on the quotient 
$A(V)=V/O(V)$, and in fact $(A(V),*)$ is an associative algebra with unit 
$\mathbf{1}+O(V)$ called the \textit{Zhu's algebra} of $V$.

If $W$ is a $V$-module, then there is a left action of $V$ on $W$ defined by
\begin{equation*}
 v*w=\mathrm{Res}_x\,x^{-1} Y((1+x)^{L(0)}v,x)w
\end{equation*}
for $v\in V$, $w\in W$, and a right action defined by
\begin{equation*}
 w*v=\mathrm{Res}_x\,x^{-1} Y((1+x)^{L(0)-1}v,x)w
\end{equation*}
for $v\in V$, $w\in W$. If we define the subspace $O(W)\subseteq W$ as the 
linear span of elements of the form
\begin{equation*}
 \mathrm{Res}_x\,x^{-2} Y((1+x)^{L(0)}v,x)w
\end{equation*}
for $v\in V$, $w\in W$, then these left and right actions define an 
$A(V)$-bimodule structure on the quotient $A(W)=W/O(W)$.

Now, following \cite{H2}, for a $V$-module $W$ we define the \textit{top level} 
$T(W)$ to be the subspace
\begin{equation*}
 T(W)=\lbrace w\in W\,\vert\,v_n w=0, v\in 
V\;\mathrm{homogeneous},\;\mathrm{wt}\,v-n-1<0\rbrace.
\end{equation*}
Then $T(W)$ is a (left) $A(V)$-module with action defined by
\begin{equation*}
 (v+O(V))\cdot w=o(v)w
\end{equation*}
for $v\in V$, $w\in T(W)$; here $o(v)=v_{\mathrm{wt}\,v-1}$ if $v$ is 
homogeneous and we extend linearly to define $o(v)$ for non-homogeneous $v$. 
The corresponce $W\rightarrow T(W)$ defines a functor
\begin{equation*}
 T: V-\mathbf{mod}\rightarrow A(V)-\mathbf{mod},
\end{equation*}
in which a $V$-module homomorphism $f: W_1\rightarrow W_2$ corresponds to 
$T(f)=f\vert_{T(W_1)}: T(W_1)\rightarrow T(W_2)$. Note that the image of the 
restriction of a $V$-homomorphism to the top level of $W_1$ is indeed contained 
in the top level of $W_2$.
\begin{rema}
 If $W$ is an irreducible $V$-module, $T(W)$ is simply the lowest conformal 
weight space of $W$, and if $W$ is semisimple, $T(W)$ is the direct sum of the 
lowest weight spaces of the irreducible components of $W$. In general, however, 
determining $T(W)$ is a subtle problem.
\end{rema}

When $V$ is a suitable vertex operator algebra, we want $T$ to give an 
equivalence of categories between the category of $V$-modules and the category 
of finite-dimensional (left) $A(V)$-modules. Thus we need to construct a 
functor $S$ from the category $\mathbf{C}^{fin}(A(V))$ of finite-dimensional 
(left) $A(V)$-modules to the category of $V$-modules. Starting from a 
finite-dimensional $A(V)$-module $U$, there are several constructions of a 
suitable $V$-module $S(U)$ (see \cite{Z}, 
\cite{Li2}, and \cite{H2}), all of which 
involve a kind of induced module construction. Here we sketch the construction 
from \cite{H2}.

We consider the affinization $V[t, t^{-1}]=V\otimes\mathbb{C}[t,t^{-1}]$ and 
its tensor algebra $\mathcal{T}(V[t,t^{-1}])$. Then given a finite-dimensional 
$A(V)$-module $U$, the vector space $\mathcal{T}(V[t,t^{-1}])\otimes U$ is a 
left $\mathcal{T}(V[t,t^{-1}])$-module in the obvious way. For $v\in V$ and 
$n\in\Z$, we use $v(n)$ to denote the action of $v\otimes t^n$ on 
$\mathcal{T}(V[t,t^{-1}])\otimes U$, and we set
\begin{equation*}
 Y_t(v,x)=\sum_{n\in\Z} v(n) x^{-n-1}.
\end{equation*}

Now we let $\mathcal{I}$ denote the submodule of 
$\mathcal{T}(V[t,t^{-1}])\otimes U$ generated by suitable elements so that on 
the quotient $S_1(U)=(\mathcal{T}(V[t,t^{-1}])\otimes U)/\mathcal{I}$, $v(n)$ 
for $v\in V$ homogeneous and $\mathrm{wt}\,v-n-1<0$ acts trivially on $U$, 
$v(\mathrm{wt}\,v-1)$ for $v\in V$ homogeneous acts on $U$ as $v+O(V)$ acts, 
and a commutator formula for the operators $Y_t(v,x)$ holds. Next, we define 
$\mathcal{J}$ to be the submodule of $S_1(U)$ generated by suitable elements so 
that on the quotient $S(U)=S_1(U)/\mathcal{J}$, an $L(-1)$-derivative property 
and an iterate formula hold for the operators $Y_t(v,x)$. Then $S(U)$ equipped 
with the vertex operator $Y(v,x)=Y_t(v,x)$ for $v\in V$ satisfies all the axioms 
for a $V$-module except that it does not necessarily have a conformal weight 
grading by $L(0)$-eigenvalues satisfying the usual grading restriction 
conditions (recall the definition of module for a vertex operator algebra from 
\cite{FLM}, \cite{FHL}, or \cite{LL}).

However, $S(U)$ does admit an $\N$-grading $S(U)=\coprod_{n\geq 0} S(U)(n)$ 
determined by the properties $S(U)(0)=U$ and for any homogeneous $v\in V$ and 
$n\in\Z$, $v_n=v(n)$ is an operator of degree $\mathrm{wt}\,v-n-1$. This means 
$S(U)$ is an $\N$-gradable weak $V$-module in the sense of \cite{H2} (such 
modules are called simply $V$-modules in \cite{Z}, but we reserve the term 
$V$-module for modules with a conformal weight grading satisfying the grading 
restriction conditions). Note that $U$ is contained in the top level of $S(U)$.
\begin{rema}
 Note that $U$ generates $S(U)$, so when $S(U)$ is semisimple, $T(S(U))=U$. 
However, $S(U)$ is not generally semisimple, and $T(S(U))$ does not generally 
equal $U$.
\end{rema}
The correspondence $U\rightarrow S(U)$ defines a functor from 
$\mathbf{C}^{fin}(A(V))$ to the category of $\N$-gradable weak $V$-modules. The 
fact that an $A(V)$-module 
homomorphism $f: U_1\rightarrow U_2$ extends to a unique $V$-homomorphism $S(f): 
S(U_1)\rightarrow S(U_2)$ amounts to an appropriate 
universal property satisfied by the induced module $S(U_1)$. Now, if every 
$\N$-gradable weak $V$-module is a direct sum of irreducible $V$-modules, then 
for any finite-dimensional $A(V)$-module $U$, $S(U)$ is a direct sum of finitely 
many irreducible $V$-modules since it is generated by the finite-dimensional 
space $U$. In this case, $S$ is a functor from $\mathbf{C}^{fin}(A(V))$ to 
$V-\mathbf{mod}$. Then from \cite{Z} 
and \cite{H2}, we have:
\begin{theo}\label{catequiv}
 If every $\N$-gradable weak $V$-module is a direct sum of irreducible $V$-modules, then the functors
 \begin{equation*}
  T: V-\mathbf{mod}\rightarrow\mathbf{C}^{fin}(A(V))
 \end{equation*}
and
\begin{equation*}
 S: \mathbf{C}^{fin}(A(V))\rightarrow V-\mathbf{mod}
\end{equation*}
are equivalences of categories. More specifically, $T\circ 
S=1_{\mathbf{C}^{fin}(A(V))}$ and $S\circ T$ is naturally isomorphic to 
$1_{V-\mathbf{mod}}$.
\end{theo}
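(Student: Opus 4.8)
The plan is to treat $S$ and $T$ as an adjoint pair and to show that the unit and counit of the adjunction are isomorphisms. Concretely, the construction of $S(U)$ above is governed by a universal property: for any $\N$-gradable weak $V$-module $W$, restriction to the top level should induce a natural bijection $\mathrm{Hom}_V(S(U),W)\cong\mathrm{Hom}_{A(V)}(U,T(W))$, making $S$ left adjoint to $T$. The unit $\eta_U\colon U\to T(S(U))$ is the inclusion of $U=S(U)(0)$ into the top level, which is well defined precisely because the lowering operators $v_n$ with $\mathrm{wt}\,v-n-1<0$ annihilate $U$ in $S(U)$; the counit $\epsilon_W\colon S(T(W))\to W$ is the $V$-homomorphism extending the identity of $T(W)$. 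Both $\eta$ and $\epsilon$ are natural by the functoriality already recorded, so it suffices to prove that each component is an isomorphism. Once this is done, $T\circ S=1$ holds on the nose, since $\eta_U$ is literally an inclusion that I will show to be onto (and $T(S(f))=f$ because $S(f)$ extends $f$), while $S\circ T\cong 1$ follows from $\epsilon$ being a natural isomorphism. Throughout I would use the hypothesis that every $\N$-gradable weak $V$-module is a direct sum of irreducibles, together with the standard facts from \cite{Z} that the top level $T(L)$ of an irreducible $V$-module $L$ is an irreducible $A(V)$-module and that any nonzero module has nonzero top level.

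The main step, and the place where the semisimplicity hypothesis is essential, is to prove that $\eta_U$ is surjective, i.e.\ $T(S(U))=U$. Since $U$ is finite-dimensional and generates $S(U)$, the hypothesis forces $S(U)=\bigoplus_j L_j$ to be a finite direct sum of irreducible $V$-modules, whence $T(S(U))=\bigoplus_j T(L_j)$ with each $T(L_j)$ an irreducible $A(V)$-module. Grouping isomorphic summands, I would write the $L_k$-isotypic component of $S(U)$ as $L_k\otimes\C^{m_k}$, so that its top level is $T(L_k)\otimes\C^{m_k}$. Since $\mathrm{End}_{A(V)}(T(L_k))=\C$ by Schur's lemma, every $A(V)$-submodule of $T(L_k)\otimes\C^{m_k}$ has the form $T(L_k)\otimes E$ for a subspace $E\subseteq\C^{m_k}$, and, because $V$ acts only on the first factor and $T(L_k)$ generates the irreducible $L_k$, the $V$-submodule it generates is exactly $L_k\otimes E$. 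Now $U\subseteq T(S(U))$ is an $A(V)$-submodule that generates $S(U)$ as a $V$-module, so the image of $U$ in each isotypic component generates that component; this forces the corresponding $E$ to be all of $\C^{m_k}$. Since a submodule of a semisimple module respects the isotypic decomposition, these full projections yield $U=T(S(U))$. The hard part is precisely this argument: without semisimplicity the top level of $S(U)$ can be strictly larger than $U$ (as the remark above notes), so the identification relies on decomposing $S(U)$ into irreducibles and using the double-centralizer structure of the isotypic components.

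Finally I would show the counit $\epsilon_W$ is an isomorphism. Because $S$ is a left adjoint it commutes with direct sums, and by hypothesis $W=\bigoplus_j L_j$, so $\epsilon_W=\bigoplus_j\epsilon_{L_j}$ reduces the claim to the irreducible case. For an irreducible $L$, the module $S(T(L))$ is $\N$-gradable weak and generated by the irreducible $A(V)$-module $T(L)$, hence semisimple; its top level is $T(S(T(L)))=T(L)$ by the step just proved, and since $T(L)$ is irreducible of multiplicity one while any nonzero summand must contribute a nonzero top level, $S(T(L))$ is forced to be a single irreducible $V$-module with top level $T(L)$, namely $L$ itself. Thus $\epsilon_L$ is a nonzero map between isomorphic irreducibles, hence an isomorphism. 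Assembling the three steps, $\eta$ and $\epsilon$ are natural isomorphisms, giving $T\circ S=1_{\mathbf{C}^{fin}(A(V))}$ and $S\circ T\cong 1_{V-\mathbf{mod}}$, so $S$ and $T$ are mutually inverse equivalences of categories.
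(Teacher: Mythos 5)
Your proof is correct, but it is worth knowing that the paper does not actually prove this theorem: it is quoted from \cite{Z} and \cite{H2} immediately after the constructions of $T$ and $S$ are sketched, so there is no in-paper argument to match yours against. What you have written is a self-contained proof of a statement the paper leaves to citation, and its ingredients line up exactly with the facts the paper does record informally. Your adjunction $\mathrm{Hom}_V(S(U),W)\cong\mathrm{Hom}_{A(V)}(U,T(W))$ is precisely the ``appropriate universal property satisfied by the induced module $S(U_1)$'' that the paper invokes to define $S$ on morphisms; your main step, that $U$ generating $S(U)$ plus semisimplicity forces $T(S(U))=U$, is a careful proof of the paper's remark that ``$U$ generates $S(U)$, so when $S(U)$ is semisimple, $T(S(U))=U$,'' which the paper asserts without argument, and your isotypic argument (every $A(V)$-submodule of $T(L_k)\otimes\C^{m_k}$ is $T(L_k)\otimes E$ by Schur's lemma, generation forces $E=\C^{m_k}$, and a submodule of a semisimple module is the sum of its projections to isotypic components) is the right way to fill that gap; your counit step is likewise sound, since a nonzero map between irreducibles is an isomorphism. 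Two points you should make explicit in a write-up: (i) to identify $\bigoplus_k T(L_k)\otimes\C^{m_k}$ as the isotypic decomposition of $T(S(U))$ over $A(V)$ you need that non-isomorphic irreducible $V$-modules have non-isomorphic top levels, which is part of Zhu's correspondence in \cite{Z}; and (ii) the finiteness of the decomposition $S(U)=\bigoplus_j L_j$, which you use when passing to top levels, follows (as the paper notes) from $U$ being finite-dimensional and generating $S(U)$. With those references supplied, your argument is a complete and correct substitute for the citation.
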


\subsection{Zhu's algebra and intertwining operators}\label{sub:Zhuintwop}

Now we consider intertwining operators among $V$-modules. If a $V$-module $W$ 
is indecomposable, its conformal weights lie in $h+\mathbb{N}$ for some 
$h\in\mathbb{C}$. 
Thus suppose $W_i$ for $i=1,2,3$ are indecomposable $V$-modules whose lowest 
weights are $h_i\in\mathbb{C}$. Then for any intertwining operator 
$\mathcal{Y}\in \mathcal{V}^{W_3}_{W_1 W_2}$, we can 
write (\cite{FHL})
\begin{equation*}
 \mathcal{Y}(w_{(1)},x)w_{(2)}=\sum_{n\in\Z} o^{\mathcal{Y}}_n (w_{(1)}\otimes 
w_{(2)}) x^{h_3-h_1-h_2-n-1}
\end{equation*}
for any $w_{(1)}\in W_1$ and $w_{(2)}\in W_2$; here for any $n\in\Z$,
\begin{equation*}
 o^{\mathcal{Y}}_n(w_{(1)}\otimes w_{(2)})=(w_{(1)})_{n-h_3+h_1+h_2} w_{(2)}
\end{equation*}
for $w_{(1)}\in W_1$ and $w_{(2)}\in W_2$. If $w_{(1)}$ and $w_{(2)}$ are 
homogenous, then
\begin{equation*}
 \mathrm{wt}\,o^{\mathcal{Y}}_n(w_{(1)}\otimes 
w_{(2)})=\mathrm{wt}\,w_{(1)}+\mathrm{wt}\,w_{(2)}+h_3-h_1-h_2-n-1
\end{equation*}
for any $n\in\Z$. In particular, for any $w_{(1)}\in W_1$, we have a linear map
\begin{equation*}
 o_\mathcal{Y}(w_{(1)})\in\mathrm{Hom}((W_2)_{(h_2)}, (W_3)_{(h_3)})
\end{equation*}
defined by
\begin{equation*}
 o_\mathcal{Y}(w_{(1)}): u_{(2)}\mapsto 
o^{\mathcal{Y}}_{\mathrm{wt}\,w_{(1)}-h_1-1}(w_{(1)}\otimes u_{(2)})
\end{equation*}
for homogeneous $w_{(1)}\in W_1$ and $u_{(2)}\in W_{(h_2)}$.

From \cite{FZ}, the linear map $o_\mathcal{Y}(w_{(1)})=0$ when $w_{(1)}\in 
O(W_1)$, so for any 
$\mathcal{Y}\in\mathcal{V}^{W_3}_{W_1 W_2}$, we have an $A(V)$-homomorphism
\begin{equation*}
 \pi(\mathcal{Y}): A(W_1)\otimes_{A(V)} (W_2)_{(h_2)}\rightarrow (W_3)_{(h_3)}
\end{equation*}
defined by
\begin{equation*}
 \pi(\mathcal{Y})((w_{(1)}+O(W_1))\otimes u_{(2)})=o_\mathcal{Y}(w_{(1)})\cdot 
u_{(2)}
\end{equation*}
for $w_{(1)}\in W_1$ and $u_{(2)}\in (W_2)_{(h_2)}$. In certain cases, the map 
$\mathcal{Y}\mapsto\pi(\mathcal{Y})$ is an isomorphism. In fact, from Theorem 
2.1 in \cite{Li2} (which is a correction and generalization of Theorem 1.5.3 in \cite{FZ}) we have
\begin{theo}
 Suppose $W_1\cong S(M_1)$ and $W_2\cong S(M_2)$ where $M_1$ and $M_2$ are 
finite-dimensional irreducible $A(V)$-modules; suppose also that $W_3\cong 
S(M_3)'$ where $M_3$ is a finite-dimensional irreducible $A(V)$-module. Then 
$\pi$ is a linear isomorphism, so as vector spaces,
 \begin{equation*}
  \mathcal{V}^{W_3}_{W_1\,W_2}\cong\mathrm{Hom}_{A(V)}(A(W_1)\otimes_{A(V)} 
(W_2)_{(h_2)}, 
(W_3)_{(h_3)}).
 \end{equation*}
\end{theo}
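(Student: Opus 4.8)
The plan is to show that the already well-defined map $\pi:\mathcal{Y}\mapsto\pi(\mathcal{Y})$ (well-definedness being the input from \cite{FZ} recalled just above) is a bijection, by proving injectivity and surjectivity separately. The guiding principle is that the hypotheses force a strong rigidity: since $W_1\cong S(M_1)$ and $W_2\cong S(M_2)$, these modules are generated by their top levels $(W_1)_{(h_1)}=M_1$ and $(W_2)_{(h_2)}=M_2$, while $W_3\cong S(M_3)'$ means its contragredient $W_3'\cong S(M_3)$ is generated by its top level $(W_3')_{(h_3)}\cong (W_3)_{(h_3)}^*$. Under these constraints an intertwining operator is completely controlled by its top-to-top component $o_{\mathcal{Y}}$, and that component is exactly the datum recorded by $\pi(\mathcal{Y})$.

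For injectivity, suppose $\pi(\mathcal{Y})=0$, i.e.\ $o_{\mathcal{Y}}(w_{(1)})$ vanishes on $(W_2)_{(h_2)}$ for every $w_{(1)}$. I would show that every matrix coefficient $\langle w_{(3)}',\mathcal{Y}(w_{(1)},x)w_{(2)}\rangle$ vanishes, by a triple reduction to top levels inducting on total conformal weight. The iterate formula \eqref{itform} expresses $\mathcal{Y}(v_n w_{(1)},x_2)$ through $Y_{W_3}(v,x_1)$ and $\mathcal{Y}(w_{(1)},x_2)$, so that, $W_1$ being generated by $M_1$, it suffices to treat $w_{(1)}\in M_1$; the commutator formula \eqref{commform} likewise reduces $w_{(2)}$ to $M_2$ at the cost of $Y_{W_2}$- and $Y_{W_3}$-actions; and pairing against $w_{(3)}'$ while transferring $Y_{W_3}$-actions to $W_3'\cong S(M_3)$ via the contragredient vertex operator reduces $w_{(3)}'$ to the top level $M_3$. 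Each reduction stays within the module actions and lowers the relevant weight, so the induction terminates at multiples of $o_{\mathcal{Y}}$, which vanish; hence $\mathcal{Y}=0$. (Alternatively, one may first apply the adjoint-intertwining-operator isomorphism to put all three modules in induced form $S(\cdot)$ and run a single symmetric generation argument.)

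Surjectivity is the main obstacle. Given $\phi\in\mathrm{Hom}_{A(V)}(A(W_1)\otimes_{A(V)}(W_2)_{(h_2)},(W_3)_{(h_3)})$, I must produce $\mathcal{Y}$ with $\pi(\mathcal{Y})=\phi$. The conceptual route, available because the vertex operator algebras of interest here are finitely reductive so that $T$ and $S$ are inverse equivalences (Theorem \ref{catequiv}) and $P(z)$-tensor products exist, is to identify $A(W_1)\otimes_{A(V)}(W_2)_{(h_2)}$ with the top level $T(W_1\boxtimes_{P(z)}W_2)$. Granting this, $\phi$ becomes an $A(V)$-map $T(W_1\boxtimes_{P(z)}W_2)\to T(W_3)$, which lifts under the equivalence $T$ to a $V$-homomorphism $\eta:W_1\boxtimes_{P(z)}W_2\to W_3$; composing $\overline{\eta}$ with the canonical map $\boxtimes_{P(z)}$ gives a $P(z)$-intertwining map, and Proposition \ref{opmapiso} converts it into the sought $\mathcal{Y}$, whose top-to-top component is $\phi$ by construction. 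I expect the genuinely hard step to be the identification $T(W_1\boxtimes_{P(z)}W_2)\cong A(W_1)\otimes_{A(V)}(W_2)_{(h_2)}$ — equivalently, the direct reconstruction of $\mathcal{Y}$ from its prescribed leading term $\phi$, using the iterate and commutator formulas as a recursion and then verifying that the resulting assignment satisfies the full Jacobi identity \eqref{intwopjac}. The consistency of this recursion, namely its independence of the order in which $V$-modes are stripped off $W_1$ and $W_2$, is precisely where the defining relations of the induced modules $S(M_1)$, $S(M_2)$ and the co-induced module $S(M_3)'$ must be invoked; this is the technical heart of Theorem 2.1 of \cite{Li2}.
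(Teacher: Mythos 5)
First, a point of comparison: the paper does not prove this statement at all. It is quoted directly from Theorem 2.1 of \cite{Li2} (stated there as a correction and generalization of Theorem 1.5.3 of \cite{FZ}), and the paper's only ``proof'' is that citation; so your proposal can only be measured against Li's argument and against the logical structure of Section 4 of the paper. Your injectivity half is consistent with that argument in outline: with $W_1\cong S(M_1)$ and $W_2\cong S(M_2)$ generated by their top levels, and $W_3'\cong S(M_3)''\cong S(M_3)$ likewise, the iterate formula \eqref{itform}, the commutator formula \eqref{commform}, and the contragredient action do reduce every matrix coefficient of $\mathcal{Y}$ to the data recorded by $\pi(\mathcal{Y})$. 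One caution: the reductions do not monotonically ``lower the relevant weight'' as you assert --- transferring a mode out of one slot typically raises the weights appearing in the other two slots --- so the induction must be organized by a filtration (the number of modes needed to express a vector over the top level), not by conformal weight. That is repairable and standard.

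The genuine gap is surjectivity, and you have not closed it. Your ``conceptual route'' is circular within this paper: the identification $T(W_1\boxtimes_{P(z)}W_2)\cong A(W_1)\otimes_{A(V)}(W_2)_{(h_2)}$ is precisely what Proposition \ref{tensprodpropo} provides, and the paper proves that proposition by invoking Corollary \ref{intwopcorol}, which is itself a consequence of the theorem you are trying to prove. The route also requires hypotheses absent from the statement (complete reducibility of $\mathbb{N}$-gradable weak modules, so that Theorem \ref{catequiv} and $P(z)$-tensor products are available), whereas the theorem holds for a general $V$ with only the stated hypotheses on $W_1$, $W_2$, $W_3$. Your fallback --- reconstructing $\mathcal{Y}$ from its prescribed leading term $\phi$ by a recursion through \eqref{itform} and \eqref{commform}, then verifying lower truncation, the Jacobi identity \eqref{intwopjac}, and the $L(-1)$-derivative property \eqref{intwopderiv} --- is indeed the correct self-contained strategy, but you carry out none of it: defining the coefficients on all of $W_1\otimes W_2$, proving independence of the chosen presentation of vectors as modes applied to top-level vectors (exactly where the structure of the induced modules $S(M_i)$, the subspace $O(W_1)$, and irreducibility of the $M_i$ enter), and verifying the axioms constitute the entire content of Li's theorem. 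Declaring that step to be ``the technical heart of Theorem 2.1 of \cite{Li2}'' concedes the point: the proposal establishes injectivity (modulo the bookkeeping above) but defers, rather than proves, surjectivity.
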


When every $\N$-gradable weak $V$-module is a direct sum of irreducible $V$-modules, both irreducible modules and 
their (irreducible) contragredients are isomorphic to modules $S(M)$ where $M$ 
is a finite-dimensional irreducible $A(V)$-module. Moreover, the lowest weight 
space of an irreducible module $W$ is $T(W)$. Thus we have:
\begin{corol}\label{intwopcorol}
 Assume that every $\N$-gradable weak $V$-module is a direct sum of irreducible $V$-modules and that $W_1$, $W_2$, and $W_3$ are 
$V$-modules. Then as vector spaces,
 \begin{equation*}
  \mathcal{V}^{W_3}_{W_1 W_2}\cong\mathrm{Hom}_{A(V)}(A(W_1)\otimes_{A(V)} 
T(W_2), 
T(W_3)).
 \end{equation*}
\end{corol}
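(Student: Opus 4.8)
The plan is to reduce the general statement to the irreducible case already settled by the preceding theorem from \cite{Li2}, exploiting that under our hypothesis every $V$-module is a direct sum of irreducibles together with the additivity of both sides of the claimed isomorphism in all three variables. So first I would record that everything in sight is additive. The assignment $W\mapsto A(W)=W/O(W)$ is additive because $O(-)$ is, the top-level functor $T$ is additive since the grading and the operators $o(v)$ decompose over direct summands, and $\otimes_{A(V)}$ and $\mathrm{Hom}_{A(V)}$ distribute over direct sums. Hence, writing $W_1=\bigoplus_i W_1^i$, $W_2=\bigoplus_j W_2^j$, $W_3=\bigoplus_k W_3^k$ as sums of irreducibles, we get
\begin{equation*}
\mathrm{Hom}_{A(V)}\big(A(W_1)\otimes_{A(V)} T(W_2), T(W_3)\big)\cong\bigoplus_{i,j,k}\mathrm{Hom}_{A(V)}\big(A(W_1^i)\otimes_{A(V)} T(W_2^j), T(W_3^k)\big).
\end{equation*}

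Next I would establish the matching biadditivity on the left. Restricting an intertwining operator of type $\binom{W_3}{W_1\,W_2}$ to $W_1^i\otimes W_2^j$ and composing with the projection of $\overline{W_3}$ onto $\overline{W_3^k}$ produces an intertwining operator of type $\binom{W_3^k}{W_1^i\,W_2^j}$, and conversely such a family assembles into a single intertwining operator; since the lower truncation, Jacobi, and $L(-1)$-derivative conditions are all linear and decompose over the summands, this gives
\begin{equation*}
\mathcal{V}^{W_3}_{W_1 W_2}\cong\bigoplus_{i,j,k}\mathcal{V}^{W_3^k}_{W_1^i W_2^j}.
\end{equation*}
Thus it suffices to prove the isomorphism when $W_1$, $W_2$, and $W_3$ are all irreducible. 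In that case I would put the modules in the form required by the preceding theorem. By Theorem \ref{catequiv}, $S$ is inverse to $T$ up to natural isomorphism, so $W_1\cong S(T(W_1))$ and $W_2\cong S(T(W_2))$ with $T(W_1),T(W_2)$ finite-dimensional irreducible $A(V)$-modules. For $W_3$ I would pass to the contragredient: since $W_3$ is irreducible so is $W_3'$, whence $W_3'\cong S(T(W_3'))$ and therefore $W_3\cong (W_3')'\cong S(M_3)'$ with $M_3=T(W_3')$. The preceding theorem then yields
\begin{equation*}
\mathcal{V}^{W_3}_{W_1 W_2}\cong\mathrm{Hom}_{A(V)}\big(A(W_1)\otimes_{A(V)} (W_2)_{(h_2)}, (W_3)_{(h_3)}\big),
\end{equation*}
and since $W_2,W_3$ are irreducible their top levels are exactly their lowest conformal-weight spaces, $(W_2)_{(h_2)}=T(W_2)$ and $(W_3)_{(h_3)}=T(W_3)$, giving the asserted isomorphism for irreducibles.

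The step needing the most care is the compatibility of the two direct-sum decompositions: I must verify that the isomorphism $\pi$ of the preceding theorem is natural enough that the irreducible-case isomorphisms glue, i.e. that $\mathcal{Y}\mapsto\pi(\mathcal{Y})$ carries the summand $\mathcal{V}^{W_3^k}_{W_1^i W_2^j}$ precisely onto the summand $\mathrm{Hom}_{A(V)}(A(W_1^i)\otimes_{A(V)} T(W_2^j), T(W_3^k))$. Because distinct irreducible summands of $W_2$ and $W_3$ generally carry different lowest conformal weights, I expect the main subtlety to be the grading bookkeeping in the definitions of $o_\mathcal{Y}(w_{(1)})$ and $\pi(\mathcal{Y})$; this is exactly where the identification $(W_i)_{(h_i)}=T(W_i)$ for irreducibles does the real work, since it matches the lowest-weight space appearing in the domain/codomain of $\pi(\mathcal{Y})$ with the top-level space appearing on the right-hand side summand by summand. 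Once this is checked, reassembling via the two decompositions produces the general isomorphism.
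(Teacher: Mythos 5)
Your proposal is correct and follows essentially the same route as the paper: the paper also reduces to irreducibles (implicitly, via the semisimplicity hypothesis), realizes $W_1\cong S(T(W_1))$, $W_2\cong S(T(W_2))$ and $W_3\cong(W_3')'\cong S(T(W_3'))'$ so that Li's theorem applies, and then identifies the lowest conformal weight spaces $(W_2)_{(h_2)}$, $(W_3)_{(h_3)}$ with the top levels $T(W_2)$, $T(W_3)$. The only difference is that you spell out the direct-sum bookkeeping that the paper leaves implicit; note also that since the corollary asserts only a vector space isomorphism, your final "gluing" worry is unnecessary, as matching summand-by-summand isomorphisms already suffice.
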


We can use Corollary \ref{intwopcorol} to identify the $P(z)$-tensor product of 
$W_1$ and $W_2$. In particular, suppose that the $A(V)$-module 
$A(W_1)\otimes_{A(V)} T(W_2)$ is finite-dimensional. Then the identity on 
$A(W_1)\otimes_{A(V)} T(W_2)$ induces an intertwining operator 
$\mathcal{Y}_\boxtimes$ of type $\binom{S(A(W_1)\otimes_{A(V)} 
T(W_2))}{W_1\,\,\,\,\,W_2}$. Moreover, recalling Proposition \ref{opmapiso}, we 
have for any $z\in\C^\times$ a $P(z)$-intertwining map 
$I_{\mathcal{Y}_\boxtimes,0}$. Then we have (see also Theorem 6.3.3 in 
\cite{Li1}):
\begin{propo}\label{tensprodpropo}
 In the setting of Corollary \ref{intwopcorol}, assume that the $A(V)$-module 
$A(W_1)\otimes_{A(V)} T(W_2)$ is finite dimensional. Then for any 
$z\in\mathbb{C}^\times$, $(S(A(W_1)\otimes_{A(V)} T(W_2)), 
I_{\mathcal{Y}_\boxtimes, 0})$ is a $P(z)$-tensor product of $W_1$ and $W_2$.
\end{propo}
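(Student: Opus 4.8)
The plan is to verify the universal property defining a $P(z)$-tensor product directly, reducing it through the isomorphisms of Proposition \ref{opmapiso} and Corollary \ref{intwopcorol} together with the categorical equivalence of Theorem \ref{catequiv} to an elementary statement about $A(V)$-homomorphisms. Write $M=A(W_1)\otimes_{A(V)} T(W_2)$, so the candidate tensor product is $S(M)$. Since $M$ is finite-dimensional by hypothesis and every $\N$-gradable weak $V$-module is a direct sum of irreducible $V$-modules, $S(M)$ is a genuine $V$-module (a finite direct sum of irreducibles), and $\mathcal{Y}_\boxtimes$ is a bona fide intertwining operator of type $\binom{S(M)}{W_1\,W_2}$; moreover $T(S(M))=M$ by Theorem \ref{catequiv}.

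First I would note that any $V$-module homomorphism $\eta$ commutes with the vertex operators and preserves conformal weights, so $\overline{\eta}\circ I_{\mathcal{Y},0}=I_{\overline{\eta}\circ\mathcal{Y},0}$ for every intertwining operator $\mathcal{Y}$ of type $\binom{S(M)}{W_1\,W_2}$. Combined with the bijectivity in Proposition \ref{opmapiso}, this shows that the defining condition $\overline{\eta}\circ I_{\mathcal{Y}_\boxtimes,0}=I$ is equivalent to $\overline{\eta}\circ\mathcal{Y}_\boxtimes=\mathcal{Y}_{I,0}$. Hence it suffices to prove that $\eta\mapsto\overline{\eta}\circ\mathcal{Y}_\boxtimes$ is a bijection from $\mathrm{Hom}_V(S(M),W_3)$ onto $\mathcal{V}^{W_3}_{W_1 W_2}$.

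The crux is to identify $\pi(\mathcal{Y}_\boxtimes)$ and to establish a naturality property of $\pi$. By the very construction of $\mathcal{Y}_\boxtimes$ as the intertwining operator corresponding under Corollary \ref{intwopcorol} to the identity of $M=T(S(M))$, we have $\pi(\mathcal{Y}_\boxtimes)=\mathrm{id}_M$. Next, for any $V$-homomorphism $\eta\colon S(M)\to W_3$ I would check from the definitions of $o_{\mathcal{Y}}$ and $\pi$ that $\pi(\overline{\eta}\circ\mathcal{Y}_\boxtimes)=T(\eta)\circ\pi(\mathcal{Y}_\boxtimes)=T(\eta)$; this rests on the facts that $\eta$ intertwines the two vertex operator actions and that $T(\eta)$ is exactly the restriction of $\eta$ to the top level $T(S(M))=M$.

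These two facts deliver existence and uniqueness simultaneously. Given a $P(z)$-intertwining map $I$, set $\mathcal{Y}=\mathcal{Y}_{I,0}$; then $\pi(\mathcal{Y})\in\mathrm{Hom}_{A(V)}(M,T(W_3))$ by Corollary \ref{intwopcorol}, and since $T$ is an equivalence with $T(S(M))=M$, there is a unique $\eta\in\mathrm{Hom}_V(S(M),W_3)$ with $T(\eta)=\pi(\mathcal{Y})$. The naturality computation then gives $\pi(\overline{\eta}\circ\mathcal{Y}_\boxtimes)=T(\eta)=\pi(\mathcal{Y})$, and the injectivity of $\pi$ from Corollary \ref{intwopcorol} forces $\overline{\eta}\circ\mathcal{Y}_\boxtimes=\mathcal{Y}$, that is, $\overline{\eta}\circ I_{\mathcal{Y}_\boxtimes,0}=I$; uniqueness follows since any competing map has the same image $T(\eta)=\pi(\mathcal{Y})$ under the injective functor $T$. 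The main obstacle I anticipate is the naturality identity $\pi(\overline{\eta}\circ\mathcal{Y}_\boxtimes)=T(\eta)$: one must verify carefully that post-composing an intertwining operator with a module map and then projecting onto lowest conformal weight spaces coincides with first projecting and then applying $T(\eta)$, which hinges on $\eta$ preserving the conformal-weight grading so that it commutes with the weight selection implicit in the operators $o_{\mathcal{Y}}(w_{(1)})$.
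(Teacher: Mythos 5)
Your proof is correct and takes essentially the same route as the paper's: both reduce the universal property via Proposition \ref{opmapiso} to the equation $\eta\circ\mathcal{Y}_\boxtimes=\mathcal{Y}_{I,0}$ and then resolve it using the injectivity of $\pi$ from Corollary \ref{intwopcorol} together with the equivalence given by $T$ and $S$. The only cosmetic difference is that the paper constructs $\eta$ explicitly as $\tau\circ S(\pi(\mathcal{Y}_{I,0}))$ while you invoke fullness and faithfulness of $T$ to produce it, and the naturality identity $\pi(\eta\circ\mathcal{Y}_\boxtimes)=T(\eta)$ that you carefully isolate is precisely the step the paper asserts as clear from the definitions.
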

\begin{proof}
 Let us use $M_{1,2}$ to denote the finite-dimensional $A(V)$-module 
$A(W_1)\otimes_{A(V)} T(W_2)$. We need to check that $(S(M_{1,2}), 
I_{\mathcal{Y}_\boxtimes,0})$ satisfies the universal property of a 
$P(z)$-tensor product. Thus suppose $W_3$ is a $V$-module and $I$ is an 
intertwining map of type $\binom{W_3}{W_1\,W_2}$. We need to show that there is 
a unique $V$-module homomorphism $\eta: S(M_{1,2})\rightarrow W_3$ such that 
$\overline{\eta}\circ I_{\mathcal{Y}_\boxtimes,0}=I$.
 
 Recalling Proposition \ref{opmapiso}, we have the intertwining operator 
$\mathcal{Y}_{I,0}$ of type $\binom{W_3}{W_1\,W_2}$, and we have the 
$A(V)$-homomorphism
 \begin{equation*}
  \pi(\mathcal{Y}_{I,0}): M_{1,2}\rightarrow T(W_3).
 \end{equation*}
Suppose also that $\tau: S(T(W_3))\rightarrow W_3$ is the unique $V$-module 
isomorphism that equals the identity on $T(W_3)$. We set $\eta=\tau\circ 
S(\pi(\mathcal{Y}_{I,0}))$. 

Now, $\eta\circ\mathcal{Y}_\boxtimes$ and $\mathcal{Y}_{I,0}$ are two 
intertwining operators of type $\binom{W_3}{W_1\,W_2}$. Moreover, it is clear 
from the definitions that
\begin{equation*}
 \pi(\eta\circ\mathcal{Y}_\boxtimes)=\pi(\mathcal{Y}_{I,0})
\end{equation*}
as $A(V)$-module homomorphisms from $M_{1,2}$ to $T(W_3)$. Since $\pi$ is an 
isomorphism by Corollary \ref{intwopcorol}, it follows that 
$\eta\circ\mathcal{Y}_{\boxtimes}=\mathcal{Y}_{I,0}$. Then
\begin{equation*}
 \overline{\eta}\circ 
I_{\mathcal{Y}_\boxtimes,0}=\overline{\eta}\circ\mathcal{Y}_\boxtimes (\cdot, 
e^{\mathrm{log}\,z})\cdot =\mathcal{Y}_{I,0}(\cdot,e^{\mathrm{log}\,z})\cdot =I
\end{equation*}
as well. To show the uniqueness of $\eta$, note that if $\overline{\eta}\circ 
I_{\mathcal{Y}_\boxtimes,0}=I$, then Proposition \ref{opmapiso} implies that 
$\eta\circ\mathcal{Y}_\boxtimes=\mathcal{Y}_{I,0}$ as well. Thus 
$\pi(\eta\circ\mathcal{Y}_\boxtimes)=\pi(\mathcal{Y}_{I,0})$, which forces 
$T(\eta)=\pi(\mathcal{Y}_{I,0})$. Therefore we have
\begin{equation*}
 \eta=\tau\circ S(T(\eta))=\tau\circ S(\pi(\mathcal{Y}_{I,0})),
\end{equation*}
as desired.
\end{proof} 

\subsection{An equivalence of tensor categories}\label{sub:tensequiv}

Here we work in the setting of Proposition \ref{tensprodpropo}, that is, we 
assume that every $\N$-gradable weak $V$-module is a direct sum of irreducible $V$-modules and that $A(W_1)\otimes_{A(V)} T(W_2)$ is 
finite dimensional for any $V$-modules $W_1$ and $W_2$. We also assume that 
$V-\mathbf{mod}$ equipped with the tensor product $\boxtimes_{P(1)}$ is a tensor 
category with associativity isomorphisms $\mathcal{A}$, unit object $V$, and 
left and right unit isomorphisms $l$ and $r$ (see Section 12.2 in \cite{HLZ8} 
for a detailed description of this tensor category structure). We discuss 
precisely how the functor $T$ induces tensor category structure on 
$\mathbf{C}^{fin}(A(V))$, omitting proofs because they are straightforward.

As in Proposition \ref{tensprodpropo}, for $V$-modules $W_1$ and $W_2$, we take
\begin{equation*}
 W_1\boxtimes_{P(1)} W_2=S(A(W_1)\otimes_{A(V)} T(W_2)),
\end{equation*}
and we use $I_{\mathcal{Y}_\boxtimes,0}$ as the tensor product 
$P(1)$-intertwining map. Also, for morphisms $f_1: 
W_1\rightarrow\widetilde{W}_1$ and $f_2: W_2\rightarrow\widetilde{W}_2$ in 
$V-\mathbf{mod}$, the tensor product morphism is induced by the universal 
property of the tensor product: $f_1\boxtimes_{P(1)} f_2$ is the unique morphism 
such that
\begin{equation*}
 \overline{f_1\boxtimes_{P(1)} f_2}(\mathcal{Y}_{\boxtimes}(\cdot, 
1)\cdot)=\widetilde{\mathcal{Y}}_{\boxtimes}(f_1(\cdot),1)f_2(\cdot),
\end{equation*}
where $\widetilde{\mathcal{Y}}_\boxtimes$ denotes the tensor product 
intertwining operator of type 
$\binom{\widetilde{W}_1\boxtimes_{P(1)}\widetilde{W}_2}{\widetilde{W}_1\,\,\,
\widetilde{W}_2}$. By the definition of $\mathcal{Y}_\boxtimes$ and 
$\widetilde{\mathcal{Y}}_\boxtimes$, this means $f_1\boxtimes_{P(1)} f_2$ is the 
morphism
\begin{equation*}
 f_1\boxtimes_{P(1)} f_2=S(A(f_1)\otimes T(f_2)): S(A(W_1)\otimes_{A(V)} 
T(W_2))\rightarrow S(A(\widetilde{W}_1)\otimes_{A(V)} T(\widetilde{W}_2)),
\end{equation*}
where $A(f_1): A(W_1)\rightarrow A(\widetilde{W}_1)$ is the $A(V)$-bimodule 
homomorphism induced by $f_1$. 

Now we can define a tensor product bifunctor $\boxtimes$ on 
$\mathbf{C}^{fin}(A(V))$ as follows. For finite-dimensional $A(V)$-modules $U_1$ 
and $U_2$, define
\begin{equation*}
 U_1\boxtimes U_2=A(S(U_1))\otimes_{A(V)} U_2,
\end{equation*}
a finite-dimensional $A(V)$-module by our assumptions. Also, for morphisms $f_1: 
U_1\rightarrow\widetilde{U}_1$ and $f_2: U_2\rightarrow\widetilde{U}_2$ in 
$\mathbf{C}^{fin}(A(V))$, we define 
\begin{equation}\label{tensmorphA}
 f_1\boxtimes f_2=A(S(f_1))\otimes f_2: A(S(U_1))\otimes_{A(V)} U_2\rightarrow 
A(S(\widetilde{U}_1))\otimes_{A(V)}\widetilde{U}_2.
\end{equation}
Note that with these definitions,
\begin{equation}\label{Sboxtimes}
S(U_1\boxtimes U_2)= S(U_1)\boxtimes_{P(1)} S(U_2),
\end{equation}
so because $T\circ S=1_{\mathbf{C}^{fin}(A(V))}$,
\begin{equation}\label{boxtimes}
 U_1\boxtimes U_2=T(S(U_1)\boxtimes_{P(1)} S(U_2))
\end{equation}
for any finite-dimensional $A(V)$-modules $U_1$ and $U_2$.

The relations \eqref{Sboxtimes} and \eqref{boxtimes} imply that for objects 
$U_1$, $U_2$, and $U_3$ in $\mathbf{C}^{fin}(A(V))$,
\begin{equation*}
 U_1\boxtimes(U_2\boxtimes 
U_3)=T(S(U_1)\boxtimes_{P(1)}(S(U_2)\boxtimes_{P(1)}S(U_3)))
\end{equation*}
and
\begin{equation*}
 (U_1\boxtimes U_2)\boxtimes U_3=T((S(U_1)\boxtimes_{P(1)} 
S(U_2))\boxtimes_{P(1)}S(U_3)).
\end{equation*}
Thus we can take the associativity isomorphism for $U_1$, $U_2$, and $U_3$ to be
\begin{equation*}
 \mathcal{A}_{U_1,U_2,U_3}=T(\mathcal{A}_{S(U_1),S(U_2),S(U_3)}).
\end{equation*}
That these associativity isomorphisms give a natural isomorphism from 
$\boxtimes\circ(1_{\mathbf{C}^{fin}(A(V))}\times\boxtimes)$ to 
$\boxtimes\circ(\boxtimes\times 1_{\mathbf{C}^{fin}(A(V))})$ and satisfy the 
pentagon axiom follows easily from these properties for the associativity 
isomorphisms in $V-\mathbf{mod}$.

Now, as in the proof of Proposition \ref{tensprodpropo}, let $\tau_W$ for a 
$V$-module $W$ denote the unique isomorphism from $S(T(W))$ to $W$ that equals 
the identity on the top level $T(W)$. Notice that for $V$-modules $W_1$ and 
$W_2$,
\begin{equation*}
 T(W_1)\boxtimes T(W_2)=A(S(T(W_1)))\otimes_{A(V)} T(W_2)
\end{equation*}
while 
\begin{equation*}
 T(W_1\boxtimes_{P(1)}W_2)=A(W_1)\otimes_{A(V)} T(W_2).
\end{equation*}
Then we can define
\begin{equation*}
 M_{W_1,W_2}: T(W_1)\boxtimes T(W_2)\rightarrow T(W_1\boxtimes_{P(1)} W_2)
\end{equation*}
to be $M_{W_1,W_2}=A(\tau_{W_1})\otimes 1_{T(W_2)}$. Thus we obtain a natural 
isomorphism $M$ from $\boxtimes\circ(T\times T)$ to $T\circ\boxtimes_{P(1)}$.

Next, we take the unit object of $\mathbf{C}^{fin}(A(V))$ to be $T(V)$. The 
natural isomorphism $M$ then allows us to define unit isomorphisms in 
$\mathbf{C}^{fin}(A(V))$. In particular, for a finite-dimensional $A(V)$-module 
$U$, we define the left unit isomorphism $l_U$ to be the composition
\begin{equation}\label{leftunit}
 T(V)\boxtimes U=T(V)\boxtimes T(S(U))\xrightarrow{M_{V,S(U)}} 
T(V\boxtimes_{P(1)} S(U))\xrightarrow{T(l_{S(U)})} T(S(U))=U
\end{equation}
and the right unit isomorphism $r_U$ to be the composition
\begin{equation}\label{rightunit}
 U\boxtimes T(V)=T(S(U))\boxtimes T(V)\xrightarrow{M_{S(U),V}} 
T(S(U)\boxtimes_{P(1)} V)\xrightarrow{T(r_{S(U)})} T(S(U))=U.
\end{equation}

We recall the notion of equivalence of tensor categories from, for example, 
\cite{BK} or \cite{Ka}. Then it is straightforward to prove:
\begin{theo}\label{tensequiv}
 Under the assumptions of this subsection, the category $\mathbf{C}^{fin}(A(V))$ 
equipped with the tensor product $\boxtimes$, unit object $T(V)$, and 
associativity and unit isomorphisms as described above is a tensor category. 
Moreover, $(T, M, 1_{T(V)})$ defines a tensor equivalence from $V-\mathbf{mod}$ 
to $\mathbf{C}^{fin}(A(V))$.
\end{theo}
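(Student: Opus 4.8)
The plan is to treat the statement as an instance of transport of tensor structure along the equivalence $(T,S)$ of Theorem \ref{catequiv}, organized around two elementary properties of the comparison isomorphisms $\tau_W\colon S(T(W))\to W$. First, since $T$ acts on morphisms by restriction to the top level and $T(S(T(W)))=T(W)$, and since $\tau_W$ restricts to the identity on $T(W)$, we get $T(\tau_W)=\tau_W|_{T(W)}=1_{T(W)}$. Second, whenever $W$ lies in the image of $S$, say $W=S(U)$, semisimplicity gives $T(W)=U$ and hence $S(T(W))=W$, so $1_W$ is an isomorphism restricting to the identity on $T(W)$ and therefore $\tau_W=1_W$ by the uniqueness of $\tau_W$. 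In particular $\tau_W=1_W$ on every iterated $\boxtimes_{P(1)}$-product, since such products lie in the image of $S$ by Proposition \ref{tensprodpropo}.

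I would first dispose of all the associativity data. The second property above, together with $T\circ S=1$ and the formula $f_1\boxtimes f_2=A(S(f_1))\otimes f_2$, shows that $S$ strictly preserves tensor products of both objects and morphisms, $S(U_1\boxtimes U_2)=S(U_1)\boxtimes_{P(1)} S(U_2)$ and $S(f_1\boxtimes f_2)=S(f_1)\boxtimes_{P(1)} S(f_2)$, and also strictly preserves associativity isomorphisms, $S(\mathcal{A}_{U_1,U_2,U_3})=\mathcal{A}_{S(U_1),S(U_2),S(U_3)}$ (here the second property is what kills the $\tau$'s that would otherwise appear, because the source and target of $\mathcal{A}_{S(U_1),S(U_2),S(U_3)}$ are tensor products). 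Since $S$ is moreover fully faithful, bifunctoriality of $\boxtimes$, the naturality and invertibility of $\mathcal{A}$, and the pentagon identity on $\mathbf{C}^{fin}(A(V))$ all follow by applying $S$ and reducing to the corresponding facts in $V-\mathbf{mod}$.

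The genuine bookkeeping is confined to the unit, since the unit object $T(V)$ satisfies $S(T(V))\cong V$ only up to the typically nontrivial isomorphism $\tau_V$. Here I would use the identity $T(\tau_W)=1_{T(W)}$ from above: it makes each $l_U,r_U$ of \eqref{leftunit}--\eqref{rightunit} a composite of isomorphisms, and together with the naturality of $M$, $l$, and $r$ it yields the naturality of $l,r$ and the triangle axiom by reduction to the triangle axiom in $V-\mathbf{mod}$. This completes the verification that $\mathbf{C}^{fin}(A(V))$ is a tensor category. For the final assertion, $T$ is already an equivalence of underlying categories by Theorem \ref{catequiv}, so it remains to check the coherence axioms for $(T,M,1_{T(V)})$ as a tensor functor. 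The key simplification is again $T(\tau_W)=1_{T(W)}$, which identifies $M_{W_1,W_2}=A(\tau_{W_1})\otimes 1_{T(W_2)}$ with the canonical tensor constraint $T(\tau_{W_1}\boxtimes_{P(1)}\tau_{W_2})$ for the quasi-inverse of the strong monoidal functor $S$; the unit and associativity coherences for $T$ then reduce to those for $V-\mathbf{mod}$. I expect the main (routine) obstacle to be exactly this associativity coherence: one must commute the three maps $\tau_{W_1},\tau_{W_2},\tau_{W_3}$ past the associator using its naturality and the naturality of $M$, after which $T(\tau_{W_i})=1_{T(W_i)}$ collapses all the remaining bookkeeping and leaves only the naturality of $\mathcal{A}$ in $V-\mathbf{mod}$.
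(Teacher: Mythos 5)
Your proposal is correct, and it takes essentially the approach the paper intends: the paper omits the proof of Theorem \ref{tensequiv} as ``straightforward,'' and your argument is exactly that omitted transport-of-structure verification along the equivalence $(T,S)$ of Theorem \ref{catequiv}. The two reductions you isolate --- $T(\tau_W)=1_{T(W)}$ and $\tau_{S(U)}=1_{S(U)}$ (hence $S$ strictly preserves $\boxtimes$, tensor products of morphisms, and the associativity isomorphisms $\mathcal{A}$, with $M_{W_1,W_2}=T(\tau_{W_1}\boxtimes_{P(1)}\tau_{W_2})$) --- are precisely what makes the paper's definitions of $\boxtimes$, of $\mathcal{A}_{U_1,U_2,U_3}=T(\mathcal{A}_{S(U_1),S(U_2),S(U_3)})$, and of the unit isomorphisms \eqref{leftunit}--\eqref{rightunit} cohere, with everything collapsing onto naturality of $\mathcal{A}$, $l$, $r$, and $M$ in $V-\mathbf{mod}$.
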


\subsection{Application to $\imzero$}\label{sub:apptoghat}

We now apply the results of this section to the vertex operator algebra 
$\imzero$ when $\ell\in\mathbb{N}$. It was shown in Theorem 3.1.3 of \cite{FZ} that every $\N$-gradable weak $\imzero$-module is a direct sum of irreducible $\imzero$-modules; in particular, $\imzero-\mathbf{mod}$ is semisimple. First, from 
\cite{FZ},
\begin{equation}\label{zalg}
 A(\imzero)\cong U(\g)/\langle x_\theta^{\ell+1}\rangle
\end{equation}
as an associative algebra, where $x_\theta$ is a root vector for the longest 
root $\theta$ of $\g$; the isomorphism is determined by
\begin{equation*}
 g+\langle x_\theta^{\ell+1}\rangle\mapsto g(-1)\mathbf{1}+O(\imzero)
\end{equation*}
for $g\in\g$. Thus if we set $\mathbf{D}(\g,\ell)$ to be the 
category of finite-dimensional $A(\imzero)$-modules, it is clear that 
$\mathbf{D}(\g,\ell)$ is simply the full subcategory of finite-dimensional 
$\g$-modules on which $x_\theta^{\ell+1}$ acts trivially. By Theorem 
\ref{catequiv}, the functor
\begin{equation*}
 T: \imzero-\mathbf{mod}\rightarrow\mathbf{D}(\g,\ell)
\end{equation*}
is an equivalence of categories. Note that if $U$ is an object of 
$\mathbf{D}(\g,\ell)$, then $T(\imu)=U$ and $S(U)\cong\imu$.

Next from \cite{FZ}, for an irreducible $\imzero$-module 
$L_{\ghat}(\ell,\lambda)$, the corresponding $A(\imzero)$-bimodule 
is
\begin{equation}\label{bimod}
 A(L_{\ghat}(\ell,\lambda))\cong(L_\lambda\otimes U(\g))/\langle 
v_\lambda\otimes 
x_\theta^{\ell-\langle\lambda,\theta\rangle+1}\rangle,
\end{equation}
where $v_\lambda$ is a highest weight vector of $L_\lambda$ and 
$\langle\cdot\rangle$ indicates the sub-bimodule generated by an element; the 
isomorphism is determined by
\begin{equation}\label{bimodiso}
 u\otimes g_1\cdots g_n+\langle v_\lambda\otimes 
x_\theta^{\ell-\langle\lambda,\theta\rangle+1}\rangle\mapsto g_n(-1)\cdots 
g_1(-1)u+O(\imlambda)
\end{equation}
for $u\in L_\lambda$ and $g_1,\ldots, g_n\in\g$. The 
$A(\imzero)\cong U(\g)/\langle x_\theta^{\ell+1}\rangle$-bimodule structure on 
$A(L_{\ghat}(\ell,\lambda))$ is induced by the following $U(\g)$-bimodule 
structure on 
$L_\lambda\otimes U(\g)$:
\begin{equation}\label{leftaction}
 x\cdot(v\otimes y)=(x\cdot v)\otimes y+v\otimes xy
\end{equation}
for $x\in\g$, $y\in U(\g)$, $v\in L_\lambda$, and
\begin{equation}\label{rightaction}
 (v\otimes y)\cdot x=v\otimes yx.
\end{equation}

We can now identify the tensor product in $\mathbf{D}(\g,\ell)$ with a quotient 
of the usual tensor product. For objects $U_1$ and $U_2$ of 
$\mathbf{D}(\g,\ell)$, let $W_{U_1,U_2}^{(\ell)}$ denote the $\g$-submodule of 
$U_1\otimes U_2$ generated by vectors of the form $v_\lambda\otimes 
x_\theta^{\ell-\langle\lambda,\theta\rangle+1}\cdot w$ where $v_\lambda$ is any 
highest weight vector (of weight $\lambda$) in $U_1$ and $w$ is any vector in 
$U_2$.
\begin{propo}\label{identboxtimes}
For objects $U_1$ and $U_2$ in $\mathbf{D}(\g,\ell)$, there is an 
$A(\imzero)\cong U(\g)/\langle x_\theta^{\ell+1}\rangle$ isomorphism
 \begin{align}\label{identboxtimesiso}
  \Phi_{U_1,U_2}: (U_1\otimes U_2)/W^{(\ell)}_{U_1,U_2} & \rightarrow 
A(S(U_1))\otimes_{A(\imzero)} U_2\nonumber\\
 u_{(1)}\otimes u_{(2)}+W^{(\ell)}_{U_1,U_2} & \mapsto 
(u_{(1)}+O(S(U_1)))\otimes u_{(2)}
\end{align}
\end{propo}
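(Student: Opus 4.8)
The plan is to realize the target $A(S(U_1))\otimes_{A(\imzero)}U_2$ concretely from the explicit bimodule presentation \eqref{bimod} and then identify it with a quotient of $U_1\otimes U_2$. Recall that $S(U_1)$ decomposes as a direct sum of irreducible modules $\imlambda$, so since $A(\cdot)=(\cdot)/O(\cdot)$ is additive, \eqref{bimod} presents $A(S(U_1))$ as a quotient $(U_1\otimes U(\g))/W_1$, where $W_1$ is the sub-bimodule generated by all $v_\lambda\otimes x_\theta^{\ell-\langle\lambda,\theta\rangle+1}$ for $v_\lambda$ a highest weight vector (of weight $\lambda$) in $U_1$, and the $U(\g)$-bimodule structure is \eqref{leftaction}--\eqref{rightaction}. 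Under this presentation, $u_{(1)}+O(S(U_1))$ corresponds to $u_{(1)}\otimes 1+W_1$.

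The first step is to replace the tensor product over $A(\imzero)$ by one over $U(\g)$. Since $A(S(U_1))$ is an $A(\imzero)$-bimodule and $U_2$ is an object of $\mathbf{D}(\g,\ell)$, both the right action on $A(S(U_1))$ and the left action on $U_2$ factor through $A(\imzero)=U(\g)/\langle x_\theta^{\ell+1}\rangle$, and a standard argument then gives
\begin{equation*}
A(S(U_1))\otimes_{A(\imzero)}U_2=A(S(U_1))\otimes_{U(\g)}U_2.
\end{equation*}
Applying the right-exact functor $-\otimes_{U(\g)}U_2$ to $W_1\hookrightarrow U_1\otimes U(\g)\twoheadrightarrow A(S(U_1))$ exhibits this space as the cokernel of $W_1\otimes_{U(\g)}U_2\to(U_1\otimes U(\g))\otimes_{U(\g)}U_2$.

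Next I would use the canonical isomorphism
\begin{equation*}
\Theta:(U_1\otimes U(\g))\otimes_{U(\g)}U_2\xrightarrow{\sim}U_1\otimes U_2,\qquad (v\otimes y)\otimes u_{(2)}\mapsto v\otimes(y\cdot u_{(2)}),
\end{equation*}
whose inverse sends $u_{(1)}\otimes u_{(2)}$ to $(u_{(1)}\otimes 1)\otimes u_{(2)}$. A short computation with \eqref{leftaction} shows that $\Theta$ intertwines the left $A(S(U_1))$-action on the source with the diagonal $\g$-action on $U_1\otimes U_2$. It then remains to prove the key identity $\Theta(W_1\otimes_{U(\g)}U_2)=W^{(\ell)}_{U_1,U_2}$. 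The inclusion $\supseteq$ is immediate, since $(v_\lambda\otimes x_\theta^{\ell-\langle\lambda,\theta\rangle+1})\otimes w$ maps to the generator $v_\lambda\otimes x_\theta^{\ell-\langle\lambda,\theta\rangle+1}\cdot w$ of $W^{(\ell)}_{U_1,U_2}$, and the image of $W_1\otimes_{U(\g)}U_2$ under $\Theta$ is a $\g$-submodule by the equivariance just noted. For $\subseteq$, I would write a general element of $W_1$ as a sum of bimodule translates $a\cdot(v_\lambda\otimes x_\theta^{\ell-\langle\lambda,\theta\rangle+1})\cdot b$ with $a,b\in U(\g)$; upon tensoring with $u_{(2)}$ the right factor $b$ is absorbed into $U_2$ via \eqref{rightaction}, while the left factor $a$ becomes the diagonal action, under which $W^{(\ell)}_{U_1,U_2}$ is stable.

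Granting this identity, $\Theta$ descends to a $\g$-module (equivalently $A(\imzero)$-module) isomorphism $A(S(U_1))\otimes_{A(\imzero)}U_2\xrightarrow{\sim}(U_1\otimes U_2)/W^{(\ell)}_{U_1,U_2}$ sending $(u_{(1)}+O(S(U_1)))\otimes u_{(2)}$ to $u_{(1)}\otimes u_{(2)}+W^{(\ell)}_{U_1,U_2}$. This is precisely the inverse of the map $\Phi_{U_1,U_2}$ in \eqref{identboxtimesiso}, so $\Phi_{U_1,U_2}$ is automatically well defined and is an $A(\imzero)$-isomorphism. I expect the main obstacle to be the identity $\Theta(W_1\otimes_{U(\g)}U_2)=W^{(\ell)}_{U_1,U_2}$: one must reconcile the two-sided generation of the bimodule $W_1$ with the one-sided $\g$-submodule generation of $W^{(\ell)}_{U_1,U_2}$, and handle general (reducible) $U_1$ by ranging over all of its highest weight vectors.
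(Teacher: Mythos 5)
Your proof is correct, and it is organized in the opposite direction from the paper's. The paper reduces to irreducible $U_1$ by complete reducibility, defines the forward map $U_1\otimes U_2\rightarrow A(S(U_1))\otimes_{A(\imzero)}U_2$, $u_{(1)}\otimes u_{(2)}\mapsto(u_{(1)}\otimes 1)\otimes u_{(2)}$, checks that $W^{(\ell)}_{U_1,U_2}$ lies in its kernel so that $\Phi_{U_1,U_2}$ descends, and then defers the existence of an inverse to Lemma 4.1 of \cite{M} (``it is straightforward to show that $\Phi_{U_1,U_2}$ has an inverse''). You instead build the inverse outright: identifying $\otimes_{A(\imzero)}$ with $\otimes_{U(\g)}$, applying right-exactness of $-\otimes_{U(\g)}U_2$ to the presentation \eqref{bimod}, and using the canonical isomorphism $(U_1\otimes U(\g))\otimes_{U(\g)}U_2\cong U_1\otimes U_2$, so that everything reduces to your key identity $\Theta(W_1\otimes_{U(\g)}U_2)=W^{(\ell)}_{U_1,U_2}$. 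The two inclusions of that identity are exactly the two halves of the paper's argument: $\supseteq$ corresponds to $W^{(\ell)}_{U_1,U_2}\subseteq\ker\widetilde{\Phi}_{U_1,U_2}$, while $\subseteq$ (your absorption of the right factor $b$ into $U_2$ via \eqref{rightaction} and of the left factor $a$ into the diagonal action via \eqref{leftaction}) is precisely the invertibility statement the paper outsources. So your route buys a self-contained proof in which well-definedness and bijectivity come simultaneously from the cokernel description, and it also handles reducible $U_1$ uniformly (your observation that all highest weight vectors in an isotypic component share the same exponent $\ell-\langle\lambda,\theta\rangle+1$ makes the ranging over all highest weight vectors legitimate), whereas the paper's version is shorter but leans on an external reference and on the reduction to the irreducible case.
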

\begin{proof}
Since finite-dimensional $\g$-modules are completely reducible, we may take 
$U_1=L_{\lambda_1}$ where $\langle\lambda_1,\theta\rangle\leq\ell$. Then using 
the identifications \eqref{bimod}, \eqref{bimodiso}, \eqref{leftaction}, and 
\eqref{rightaction}, we have a $\g$-module homomorphism
\begin{align*}
 \widetilde{\Phi}_{U_1,U_2}: U_1\otimes U_2 & \rightarrow (U_1\otimes 
U(\g))/\langle v_\lambda\otimes x_\theta^{\ell-\langle\lambda_1,\theta\rangle+1} 
\rangle\otimes U_2\nonumber\\
 u_{(1)}\otimes u_{(2)} &\mapsto (u_{(1)}\otimes 1+\langle v_\lambda\otimes 
x_\theta^{\ell-\langle\lambda_1,\theta\rangle+1} \rangle)\otimes u_{(2)}
\end{align*}
which contains $W^{(\ell)}_{U_1,U_2}$ in its kernel, so we have the desired 
homomorphism $\Phi_{U_1,U_2}$. It is straightforward to show that 
$\Phi_{U_1,U_2}$ has an inverse, so it is an isomorphism. See Lemma 4.1 in 
\cite{M} for more details; see also Theorem 3.2.3 in \cite{FZ} and the related Theorem 6.5 in \cite{FF}.
\end{proof}

Due to the preceding proposition, we can take
\begin{equation}\label{gboxtimes}
 U_1\boxtimes U_2=(U_1\otimes U_2)/W^{(\ell)}_{U_1,U_2}
\end{equation}
for $\g$-modules $U_1$ and $U_2$ in $\mathbf{D}(\g,\ell)$, recalling the 
$\boxtimes$ notation of the previous subsection. Note that if $f_1: 
U_1\rightarrow\widetilde{U}_1$ and $f_2: U_2\rightarrow\widetilde{U}_2$ are 
$\g$-module homomorphisms in $\mathbf{D}(\g,\ell)$, $f_1\otimes f_2$ takes 
generators of $W^{(\ell)}_{U_1,U_2}$ to generators of 
$W^{(\ell)}_{\widetilde{U}_1,\widetilde{U}_2}$, so $f_1\otimes f_2$ induces a 
$\g$-module homomorphism
\begin{equation*}
 f_1\boxtimes f_2: U_1\boxtimes 
U_2\rightarrow\widetilde{U}_1\boxtimes\widetilde{U}_2.
\end{equation*}
Then \eqref{tensmorphA} and \eqref{identboxtimesiso} show that the isomorphisms 
$\Phi_{U_1,U_2}$ define a natural isomorphism.
\begin{rema}
 Our realization \eqref{gboxtimes} of $U_1\boxtimes U_2$ is not especially useful for calculating fusion rules, that is, the multiplicities of irreducible $\g$-modules in $U_1\boxtimes U_2$, since the submodule $W^{(\ell)}_{U_1, U_2}$ is usually difficult to calculate explicitly (but note that Theorem 6.2 in \cite{FF} is an interesting formula for fusion rules likewise derived using results in \cite{FZ}). The main reason we use \eqref{gboxtimes} is that it allows us to realize the triple tensor products $U_1\boxtimes(U_2\boxtimes U_3)$ and $(U_1\boxtimes U_2)\boxtimes U_3$, for objects $U_1$, $U_2$, and $U_3$ of $\mathbf{D}(\g,\ell)$, as quotients of $U_1\otimes U_2\otimes U_3$. In the following sections, we will use this to realize the associativity isomorphism $\mathcal{A}_{U_1,U_2,U_3}$ as the isomorphism induced from a certain automorphism of $U_1\otimes U_2\otimes U_3$.
\end{rema}

Now for $\imzero$-modules $W_1$ and $W_2$, we recall from Proposition 
\ref{tensprodpropo} the intertwining operator $\mathcal{Y}_\boxtimes$ of type 
$\binom{S(A(W_1)\otimes_{A(\imzero)} T(W_2))}{W_1\,\,\,\,\,W_2}$ and obtain:
\begin{propo}
 If $W_1$ and $W_2$ are $\imzero$-modules and $z\in\C^\times$, 
$S(A(W_1)\otimes_{A(\imzero)} T(W_2))$ equipped with the $P(z)$-intertwining map 
$I_{\mathcal{Y}_\boxtimes,0}$ is a $P(z)$-tensor product of $W_1$ and $W_2$.
\end{propo}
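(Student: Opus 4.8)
The plan is to obtain this proposition as a direct application of Proposition \ref{tensprodpropo} with $V=\imzero$. That proposition has two hypotheses: first, the standing assumption of Corollary \ref{intwopcorol} that every $\N$-gradable weak $\imzero$-module is a direct sum of irreducible modules; and second, that the $A(\imzero)$-module $A(W_1)\otimes_{A(\imzero)} T(W_2)$ is finite-dimensional. Once both are in hand, Proposition \ref{tensprodpropo} delivers the conclusion for every $z\in\C^\times$ with no further work, so the entire proof reduces to verifying these two hypotheses for the vertex operator algebra $\imzero$ when $\ell\in\N$.

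The first hypothesis is precisely Theorem 3.1.3 of \cite{FZ}, recalled at the beginning of this subsection, which holds because $\ell\in\N$; in particular $\imzero-\mathbf{mod}$ is semisimple. Thus the only substantive point is the finite-dimensionality of $A(W_1)\otimes_{A(\imzero)} T(W_2)$. Here I would first note that $T(W_2)$ is finite-dimensional: since $\imzero-\mathbf{mod}$ is semisimple with only finitely many irreducible objects, $W_2$ is a finite direct sum of irreducibles, and $T(W_2)$ is the corresponding finite direct sum of their (finite-dimensional) lowest conformal weight spaces.

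For the tensor product itself, I would use semisimplicity once more: the natural isomorphism $S\circ T\cong 1_{\imzero-\mathbf{mod}}$ from Theorem \ref{catequiv} gives $W_1\cong S(T(W_1))$, hence $A(W_1)\cong A(S(T(W_1)))$ as $A(\imzero)$-bimodules. Applying Proposition \ref{identboxtimes} with $U_1=T(W_1)$ and $U_2=T(W_2)$ then yields
\begin{equation*}
 A(W_1)\otimes_{A(\imzero)} T(W_2)\cong (T(W_1)\otimes T(W_2))/W^{(\ell)}_{T(W_1),T(W_2)},
\end{equation*}
which is a quotient of the finite-dimensional $\g$-module $T(W_1)\otimes T(W_2)$ and is therefore finite-dimensional, as required. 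Since the argument is essentially hypothesis-checking, I expect no serious obstacle; the only point demanding care is this finite-dimensionality step, which crucially bypasses the fact that the bimodule $A(W_1)$ is itself infinite-dimensional (as \eqref{bimod} shows) by passing through the explicit quotient description of the tensor product in Proposition \ref{identboxtimes}.
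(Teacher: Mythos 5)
Your proposal is correct and is essentially the paper's own (implicit) proof: the paper states this proposition as a direct application of Proposition \ref{tensprodpropo} to $V=\imzero$, with the semisimplicity hypothesis supplied by Theorem 3.1.3 of \cite{FZ} (recalled at the start of the subsection) and the finite-dimensionality of $A(W_1)\otimes_{A(\imzero)} T(W_2)$ supplied by Proposition \ref{identboxtimes}, exactly as you argue. Your explicit hypothesis-checking — using $W_1\cong S(T(W_1))$ to get $A(W_1)\cong A(S(T(W_1)))$ and then identifying $A(W_1)\otimes_{A(\imzero)} T(W_2)$ with the finite-dimensional quotient $(T(W_1)\otimes T(W_2))/W^{(\ell)}_{T(W_1),T(W_2)}$ — just makes precise what the paper leaves tacit.
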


Thus in the tensor category of $\imzero$-modules given by \cite{HL1}-\cite{HL4}, 
\cite{H1}, we may take the tensor product $\boxtimes_{P(1)}$ of 
$\imzero$-modules $W_1$ and $W_2$ to be
\begin{equation*}
 W_1\boxtimes_{P(1)} W_2=S(A(W_1)\otimes_{A(\imzero)} T(W_2)))\cong 
L_{\ghat}(\ell, T(W_1)\boxtimes T(W_2))
\end{equation*}
equipped with the $P(1)$-intertwining map
\begin{equation*}
 I_{\mathcal{Y}_\boxtimes,0}=\mathcal{Y}_\boxtimes(\cdot,1)\cdot.
\end{equation*}
Moreover, by Theorem \ref{tensequiv} and the natural isomorphism of Proposition 
\ref{identboxtimes}, the equivalence of categories $T$ is in fact an equivalence 
of tensor categories, where $\mathbf{D}(\g,\ell)$ is equipped with the tensor 
product $\boxtimes$ of \eqref{gboxtimes}. 

The unit object of $\mathbf{D}(\g,\ell)$ is $T(\imzero)=\C\mathbf{1}$, the 
trivial one-dimensional $\g$-module. It is easy to see from \eqref{leftunit}, 
\eqref{rightunit}, \eqref{identboxtimesiso}, and the definition of the unit 
isomorphisms in $\imzero-\mathbf{mod}$ (see for example Section 12.2 in 
\cite{HLZ8}), that the unit isomorphisms in $\mathbf{D}(\g,\ell)$ are the 
obvious ones
\begin{align*}
 l_U: \C\mathbf{1}\boxtimes U=\C\mathbf{1}\otimes U & \rightarrow U\nonumber\\
 \mathbf{1}\otimes u & \mapsto u
\end{align*}
and
\begin{align*}
 r_U: U\boxtimes\C\mathbf{1}=U\otimes\C\mathbf{1} & \rightarrow U\nonumber\\
 u\otimes\mathbf{1} & \mapsto u
\end{align*}
for $\g$-modules $U$ in $\mathbf{D}(\g,\ell)$. (Note that both 
$W^{(\ell)}_{\C\mathbf{1},U}$ and $W^{(\ell)}_{U,\C\mathbf{1}}$ are zero.)
The remainder of this paper is devoted to a description of the associativity 
isomorphisms in $\mathbf{D}(\g,\ell)$.

\section{The Knizhnik-Zamolodchikov equations}

The goal of this section and the next is to identify the associativity 
isomorphisms in $\mathbf{D}(\g,\ell)$ with certain isomorphisms of $\g$-modules 
obtained from solutions to Knizhnik-Zamolodchikov (KZ) equations (\cite{KZ}; see 
also \cite{HL4}).

\subsection{Formal KZ equations}\label{KZeqns}

We start by deriving a system of (formal) differential equations satisfied by an 
iterate of intertwining operators. Such equations were first derived in 
\cite{KZ}. We will also need a similar system of equations for a product of 
intertwining operators; we will not derive them here, since a vertex algebraic 
derivation may be found in \cite{HL4}.

First we consider the action of $L(-1)$ on a $\imzero$-module $W$. If $w\in W$ 
satisfies $g(n)w=0$ for any $g\in\g$ and $n>0$, then \eqref{L(-1)} implies
\begin{equation}\label{L-1}
 L(-1)w=\dfrac{1}{\ell+h^\vee}\sum_{i=1}^{\mathrm{dim}\,\g} 
\gamma_i(-1)\gamma_i(0)w.
\end{equation}
We shall use the $L(-1)$-derivative property \eqref{intwopderiv} for 
intertwining operators and \eqref{L-1} to derive the KZ equations for an iterate 
of intertwining operators.

We will need the commutator formula \eqref{commform} for intertwining operators. 
Suppose $W_1$, $W_2$, and $W_3$ are $\imzero$-modules and $\mathcal{Y}\in 
\mathcal{V}^{W_3}_{W_1\,W_2}$. If we take $v=g(-1)\mathbf{1}$ for $g\in\g$ in 
\eqref{commform}, we obtain
\begin{align}\label{intwopcomm}
 [g(x_1), \mathcal{Y}(w_{(1)},x_2)]
& =\mathrm{Res}_{x_0}\, x_2^{-1}\delta \left(\frac{x_1-x_0}{x_2}\right) 
\mathcal{Y}(g(x_0)w_{(1)},x_2)\nonumber\\
& =\sum_{i\geq 0}\dfrac{(-1)^i}{i!}\left(\dfrac{\partial}{\partial 
x_1}\right)^i\left(x_2^{-1}\delta \left(\frac{x_1}{x_2}\right)\right) 
\mathcal{Y}(g(i)w_{(1)},x_2)
\end{align}
for $w_{(1)}\in W_1$. If we further extract the coefficient of $x_1^{-n-1}$ in 
\eqref{intwopcomm}, we obtain
 \begin{align}\label{intwopcomm2}
  [g(n), \mathcal{Y}(w_{(1)},x_2)]=\sum_{i\geq 
0}  \binom{n}{i} x_2^{n-i} \mathcal{Y}(g(i)w_{(1)},x_2)
 \end{align}
 for $w_{(1)}\in W_1$. When $w_{(1)}\in W_1$ satisfies $g(i)w=0$ for $i>0$, 
\eqref{intwopcomm2} simplifies to
 \begin{equation}\label{intwopcomm3}
  [g(n),\mathcal{Y}(w_{(1)},x)]=x^n\mathcal{Y}(g(0)w_{(1)},x)
 \end{equation}
for any $g\in\g$ and $n\in\Z$. We shall also need the iterate formula for 
intertwining operators. When we take $v=g(-1)\mathbf{1}$ for $g\in\g$ in 
\eqref{itform}, we obtain
\begin{align}\label{intwopit}
 \mathcal{Y}(g(n)w_{(1)}, x_2) & =\mathrm{Res}_{x_1}\,\left( (x_1-x_2)^n 
g(x_1)\mathcal{Y}(w_{(1)}, x_2)-(-x_2+x_1)^n \mathcal{Y}(w_{(1)},x_2) 
g(x_1)\right)\nonumber\\
 & =\sum_{i\geq 0} (-1)^i\binom{n}{i}(x_2^i\, 
g(n-i)\mathcal{Y}(w_{(1)},x_2)-(-1)^n x_2^{n-i} \mathcal{Y}(w_{(1)},x_2)g(i))
\end{align}
for $w_{(1)}\in W_1$.

Now suppose $W_1$, $W_2$, $W_3$, $W_4$ and $M_2$ are $\imzero$-modules, 
$\mathcal{Y}^1$ is an intertwining operator of type $\binom{W_4}{M_2\,W_3}$, and 
$\mathcal{Y}^2$ is an intertwining operator of type $\binom{M_2}{W_1\,W_2}$. We 
use \eqref{intwopit} to obtain:
\begin{lemma}\label{aon1and2}
 For $u_{(1)}\in T(W_1)$, $u_{(2)}\in T(W_2)$, $u_{(3)}\in T(W_3)$, $u_{(4)}'\in 
T(W_4')$, and $g\in\g$,
 \begin{align}\label{aon1}
  \langle u_{(4)}',\mathcal{Y}^1(\mathcal{Y}^2(g(-1)u_{(1)}, x_0)u_{(2)}, & 
x_2)u_{(3)}\rangle = x_0^{-1}\langle 
u_{(4)}',\mathcal{Y}^1(\mathcal{Y}^2(u_{(1)}, 
x_0)g(0)u_{(2)},x_2)u_{(3)}\rangle\nonumber\\
  & +(x_2+x_0)^{-1}\langle u_{(4)}',\mathcal{Y}^1(\mathcal{Y}^2(u_{(1)}, 
x_0)u_{(2)},x_2)g(0)u_{(3)}\rangle
 \end{align}
and
\begin{align}\label{aon2}
 \langle u_{(4)}',\mathcal{Y}^1(\mathcal{Y}^2(u_{(1)}, 
x_0)g(-1)u_{(2)},x_2)u_{(3)}\rangle & =-x_0^{-1}\langle 
u_{(4)}',\mathcal{Y}^1(\mathcal{Y}^2(g(0)u_{(1)}, 
x_0)u_{(2)},x_2)u_{(3)}\rangle\nonumber\\
&\;\;\;\;+x_2^{-1} \langle u_{(4)}',\mathcal{Y}^1(\mathcal{Y}^2(u_{(1)}, 
x_0)u_{(2)},x_2)g(0)u_{(3)}\rangle.
\end{align}
\end{lemma}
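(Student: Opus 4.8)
The plan is to prove both identities by applying the iterate formula \eqref{intwopit} and the commutator formula \eqref{intwopcomm3} for intertwining operators, using two annihilation properties repeatedly to collapse the infinite sums they produce. First, since $u_{(1)}$, $u_{(2)}$, and $u_{(3)}$ are top-level vectors, $g(i)$ annihilates each of them for $i>0$; this follows from \eqref{L0comm}, which shows that $g(i)$ strictly lowers conformal weight when $i>0$. Second, since $u_{(4)}'$ is a top-level vector of $W_4'$, the contragredient action gives $\langle u_{(4)}',g(n)w\rangle=-\langle g(-n)u_{(4)}',w\rangle$, so $\langle u_{(4)}',g(n)w\rangle=0$ whenever $n<0$. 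These two facts will reduce each iterate to one or two surviving terms.

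To prove \eqref{aon1}, I would first apply \eqref{intwopit} to $\mathcal{Y}^2$ with $n=-1$, where the binomial coefficients simplify via $(-1)^i\binom{-1}{i}=1$, giving
\begin{equation*}
\mathcal{Y}^2(g(-1)u_{(1)},x_0)u_{(2)}=\sum_{i\geq 0}x_0^i\,g(-1-i)\mathcal{Y}^2(u_{(1)},x_0)u_{(2)}+\sum_{i\geq 0}x_0^{-1-i}\mathcal{Y}^2(u_{(1)},x_0)g(i)u_{(2)}.
\end{equation*}
Because $g(i)u_{(2)}=0$ for $i>0$, the second sum reduces to its $i=0$ term, which is exactly the first term on the right-hand side of \eqref{aon1}. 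For the first sum I would apply \eqref{intwopit} to $\mathcal{Y}^1$ with $n=-1-i$ acting on $m:=\mathcal{Y}^2(u_{(1)},x_0)u_{(2)}\in\overline{M_2}$: pairing against $u_{(4)}'$ annihilates every term carrying a negative mode $g(-1-i-j)$ on the left, and pairing against $u_{(3)}$ annihilates every term carrying a mode $g(j)$ with $j>0$ on the right, so only the single term with $g(0)$ acting on $u_{(3)}$ survives, contributing $(-1)^i x_2^{-1-i}\langle u_{(4)}',\mathcal{Y}^1(m,x_2)g(0)u_{(3)}\rangle$. Resumming the geometric series $\sum_{i\geq 0}(-1)^i x_0^i x_2^{-1-i}=(x_2+x_0)^{-1}$ (expanded in nonnegative powers of $x_0$) then yields the second term on the right-hand side of \eqref{aon1}.

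To prove \eqref{aon2}, I would instead use \eqref{intwopcomm3} for $\mathcal{Y}^2$, valid because $g(i)u_{(1)}=0$ for $i>0$, to move $g(-1)$ off of $u_{(2)}$:
\begin{equation*}
\mathcal{Y}^2(u_{(1)},x_0)g(-1)u_{(2)}=g(-1)\mathcal{Y}^2(u_{(1)},x_0)u_{(2)}-x_0^{-1}\mathcal{Y}^2(g(0)u_{(1)},x_0)u_{(2)}.
\end{equation*}
The correction term is exactly the first term on the right-hand side of \eqref{aon2}, while inserting the remaining term into $\mathcal{Y}^1(\cdot,x_2)u_{(3)}$ and pairing with $u_{(4)}'$ is precisely the $i=0$ instance of the calculation already performed for \eqref{aon1}, which produces the second term. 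The main obstacle will be the formal-variable bookkeeping: one must expand expressions such as $(x_2+x_0)^{-1}$ consistently in nonnegative powers of the second variable $x_0$, justify the geometric resummation as an identity of formal series in the relevant completion, and verify the contragredient sign convention so that no spurious boundary terms survive the top-level hypotheses.
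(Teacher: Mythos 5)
Your proposal is correct and follows essentially the same route as the paper's own proof: for \eqref{aon1} the iterate formula \eqref{intwopit} with $n=-1$ applied to $\mathcal{Y}^2$ and then with $n=-i-1$ applied to $\mathcal{Y}^1$, killing surviving modes via $g(i)u_{(2)}=0$, $g(j)u_{(3)}=0$ and the vanishing of negative modes against $u_{(4)}'\in T(W_4')$, followed by the resummation $\sum_{i\geq 0}(-1)^i x_0^i x_2^{-1-i}=(x_2+x_0)^{-1}$; and for \eqref{aon2} the commutator formula \eqref{intwopcomm3} on $\mathcal{Y}^2$ followed by the same $\mathcal{Y}^1$ computation. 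The only cosmetic difference is that you justify $g(i)u_{(k)}=0$ for $i>0$ via \eqref{L0comm}, whereas it is immediate from the paper's definition of the top level $T(W)$ applied to $v=g(-1)\mathbf{1}$.
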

\begin{proof}
 To prove \eqref{aon1}, we first note that \eqref{intwopit} and the fact that 
$g(i)u_{(2)}=0$ for $i>0$ implies that
 \begin{equation}\label{firstterm}  
\mathcal{Y}^2(g(-1)u_{(1)},x_0)u_{(2)}=x_0^{-1}\mathcal{Y}^2(u_{(1)},x_0)g(0)u_{
(2)}+\sum_{i\geq 0} x_0^i\,g(-i-1)\mathcal{Y}^2(u_{(1)},x_0)u_{(2)}.
 \end{equation}
Then applying \eqref{intwopit} again as well as the fact that $g(j)u_{(3)}=0$ 
for $j>0$,
\begin{align}\label{secondterm}
 \mathcal{Y}^1(g(-i-1)\mathcal{Y}^2( & u_{(1)},x_0) u_{(2)},x_2)u_{(3)} =(-1)^i 
x_2^{-i-1}\mathcal{Y}^1(\mathcal{Y}^2(u_{(1)},x_0)u_{(2)},x_2)g(0)u_{(3)}
\nonumber\\
 & +\sum_{j\geq 0} (-1)^j\binom{-i-1}{j} 
x_2^j\,g(-i-j-1)\mathcal{Y}^1(\mathcal{Y}^2(u_{(1)},x_0)u_{(2)},x_2)u_{(3)}.
\end{align}
Putting \eqref{firstterm} and \eqref{secondterm} together,
\begin{align}\label{together}
 \mathcal{Y}^1(\mathcal{Y}^2( & g(-1)u_{(1)}, x_0)u_{(2)},x_2)u_{(3)} = 
x_0^{-1}\mathcal{Y}^1(\mathcal{Y}^2(u_{(1)}, 
x_0)g(0)u_{(2)},x_2)u_{(3)}\nonumber\\
 &+\sum_{i\geq 0} (-1)^i x_2^{-1-i} x_0^i \mathcal{Y}^1(\mathcal{Y}^2(u_{(1)}, 
x_0)u_{(2)},x_2)g(0)u_{(3)}\nonumber\\
 & +\sum_{i\geq 0}\sum_{j\geq 0} (-1)^j\binom{-i-1}{j} 
x_2^j\,g(-i-j-1)\mathcal{Y}^1(\mathcal{Y}^2(u_{(1)},x_0)u_{(2)},x_2)u_{(3)}.
\end{align}
Since the adjoint operator of $g(-i-j-1)$ is $-g(i+j+1)$, the third term on the 
right side of \eqref{together} disappears when \eqref{together} is paired with 
$u_{(4)}'\in T(W_4')$. Since also
\begin{equation*}
 \sum_{i\geq 0} (-1)^i x_2^{-i-1} x_0^i=(x_2+x_0)^{-1},
\end{equation*}
\eqref{aon1} follows.

To prove \eqref{aon2}, we first use \eqref{intwopcomm3} to obtain
\begin{equation}\label{firstterm'} 
\mathcal{Y}^2(u_{(1)},x_0)g(-1)u_{(2)}=g(-1)\mathcal{Y}^2(u_{(1)},x_0)u_{(2)}
-x_0^{-1}\mathcal{Y}^2(g(0)u_{(1)},x_0)u_{(2)}.
\end{equation}
Then by \eqref{intwopit} and the fact that $g(i)u_{(3)}=0$ for $i>0$,
\begin{align}\label{secondterm'}
 \mathcal{Y}^1(g(-1)\mathcal{Y}^2( u_{(1)},x_0) & 
u_{(2)},x_2)u_{(3)}=x_2^{-1}\mathcal{Y}^1(\mathcal{Y}^2(u_{(1)},x_0)u_{(2)},
x_2)g(0)u_{(3)}\nonumber\\
 & +\sum_{i\geq 0} 
x_2^i\,g(-i-1)\mathcal{Y}^1(\mathcal{Y}^2(u_{(1)},x_0)u_{(2)},x_2)u_{(3)}.
\end{align}
Putting \eqref{firstterm'} and \eqref{secondterm'} together,
\begin{align}\label{together'}
 \mathcal{Y}^1( & \mathcal{Y}^2( u_{(1)},x_0) g(-1)u_{(2)},x_2)u_{(3)} 
=-x_0^{-1}\mathcal{Y}^1(\mathcal{Y}^2( g(0)u_{(1)},x_0)  
u_{(2)},x_2)u_{(3)}\nonumber\\
 & + 
x_2^{-1}\mathcal{Y}^1(\mathcal{Y}^2(u_{(1)},x_0)u_{(2)},x_2)g(0)u_{(3)}+\sum_{
i\geq 0} 
x_2^i\,g(-i-1)\mathcal{Y}^1(\mathcal{Y}^2(u_{(1)},x_0)u_{(2)},x_2)u_{(3)}.
\end{align}
Since the adjoint of the operator $g(-i-1)$ is $-g(i+1)$, the third term on the 
right side of \eqref{together'} disappears when \eqref{together'} is paired with 
$u_{(4)}'\in T(W_4')$, and \eqref{aon2} follows.
\end{proof}

We now define an operator $\Omega_{1,2}$ on $(T(W_1)\otimes T(W_2)\otimes 
T(W_3))^*$ by
\begin{equation}\label{omega12}
 (\Omega_{1,2} F)(u_{(1)}\otimes u_{(2)}\otimes 
u_{(3)})=\sum_{i=1}^{\mathrm{dim}\,\g} 
F(\gamma_i(0)u_{(1)}\otimes\gamma_i(0)u_{(2)}\otimes u_{(3)})
\end{equation}
for any $F\in (T(W_1)\otimes T(W_2)\otimes T(W_3))^*$, recalling 
that $\lbrace \gamma_i\rbrace$ is an orthonormal basis for $\g$. We analogously 
define operators $\Omega_{1,3}$ and $\Omega_{2,3}$ on $(T(W_1)\otimes 
T(W_2)\otimes T(W_3))^*$ in the obvious way. The operators $\Omega_{1,2}$, 
$\Omega_{1,3}$, and $\Omega_{2,3}$ also have natural extensions to operators on 
$(T(W_1)\otimes T(W_2)\otimes T(W_3))^*\lbrace x_0,x_2\rbrace$. We can now 
derive the KZ equations:
\begin{theo}
 For any $u_{(4)}'\in T(W_4')$, define
 \begin{equation*}
\varphi_{\mathcal{Y}^1,\mathcal{Y}^2}(u_{(4)}')=\langle 
u_{(4)}',\mathcal{Y}^1(\mathcal{Y}^2(\cdot,x_0)\cdot,x_2)\cdot\rangle\in 
(T(W_1)\otimes T(W_2)\otimes T(W_3))^*\lbrace x_0,x_2\rbrace.
\end{equation*}
Then $\varphi_{\mathcal{Y}^1,\mathcal{Y}^2}(u_{(4)}')$ satisfies the system of 
formal differential equations
\begin{align}
 (\ell+h^\vee)\dfrac{\partial\varphi}{\partial x_0} & 
=\left(\dfrac{\Omega_{1,2}}{x_0}+\dfrac{\Omega_{1,3}}{x_2+x_0}
\right)\varphi\label{KZit0}\\
 (\ell+h^\vee)\dfrac{\partial\varphi}{\partial x_2} & 
=\left(\dfrac{\Omega_{2,3}}{x_2}+\dfrac{\Omega_{1,3}}{x_2+x_0}
\right)\varphi.\label{KZit2}
\end{align}
Similarly, suppose $M_1$ is an $\imzero$-module, $\mathcal{Y}_1$ is an 
intertwining operator of type $\binom{W_4}{W_1\,M_1}$ and $\mathcal{Y}_2$ is an 
intertwining operator of type $\binom{M_1}{W_1\,W_2}$; for any $u_{(4)}'\in 
T(W_4')$, set
\begin{align*}
 \psi_{\mathcal{Y}_1,\mathcal{Y}_2}(u_{(4)}')=\langle 
u_{(4)}',\mathcal{Y}_1(\cdot,x_1)\mathcal{Y}_2(\cdot,x_2)\cdot\rangle\in 
(T(W_1)\otimes T(W_2)\otimes T(W_3))^*\lbrace x_1,x_2\rbrace.
\end{align*}
Then $\psi_{\mathcal{Y}_1,\mathcal{Y}_2}(u_{(4)}')$ satisfies the system of 
formal differential equations
\begin{align}
 (\ell+h^\vee)\dfrac{\partial\psi}{\partial x_1} & 
=\left(\dfrac{\Omega_{1,2}}{x_1-x_2}+\dfrac{\Omega_{1,3}}{x_1}\right)\psi\label{KZprod1}\\
 (\ell+h^\vee)\dfrac{\partial\psi}{\partial x_2} & 
=\left(\dfrac{\Omega_{2,3}}{x_2}-\dfrac{\Omega_{1,2}}{x_1-x_2}\right)\psi.\label
{KZprod2}
\end{align}
\end{theo}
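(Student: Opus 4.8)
The plan is to derive the KZ system \eqref{KZit0}--\eqref{KZit2} for the iterate functional $\varphi_{\mathcal{Y}^1,\mathcal{Y}^2}(u_{(4)}')$ directly from the $L(-1)$-derivative property \eqref{intwopderiv}, the formula \eqref{L-1} for the action of $L(-1)$ on a vector annihilated by the positive modes, and the two iterate identities \eqref{aon1}--\eqref{aon2} established in Lemma \ref{aon1and2}; the product system \eqref{KZprod1}--\eqref{KZprod2} will be obtained by a parallel computation. I would begin with \eqref{KZit0}, the $x_0$-derivative. Applying $\tfrac{\partial}{\partial x_0}$ to $\varphi=\langle u_{(4)}',\mathcal{Y}^1(\mathcal{Y}^2(\cdot,x_0)\cdot,x_2)\cdot\rangle$ and using \eqref{intwopderiv} for the \emph{inner} operator $\mathcal{Y}^2$, the derivative pulls $L(-1)$ inside, acting on the first slot: $\tfrac{\partial}{\partial x_0}\mathcal{Y}^2(u_{(1)},x_0) = \mathcal{Y}^2(L(-1)u_{(1)},x_0)$. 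Since $u_{(1)}\in T(W_1)$ is annihilated by all $g(n)$ with $n>0$, formula \eqref{L-1} gives
\begin{equation*}
 L(-1)u_{(1)}=\frac{1}{\ell+h^\vee}\sum_{i=1}^{\mathrm{dim}\,\g}\gamma_i(-1)\gamma_i(0)u_{(1)}.
\end{equation*}
Substituting this and summing the identity \eqref{aon1} against the orthonormal basis $\{\gamma_i\}$, with $g=\gamma_i$ and $u_{(1)}$ replaced by $\gamma_i(0)u_{(1)}$, should produce exactly the two terms $\tfrac{\Omega_{1,2}}{x_0}$ and $\tfrac{\Omega_{1,3}}{x_2+x_0}$ after recognizing the sums $\sum_i F(\gamma_i(0)\gamma_i(0)u_{(1)}\otimes\cdots)$ as the definitions \eqref{omega12} of $\Omega_{1,2}$ and $\Omega_{1,3}$.

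For \eqref{KZit2}, the $x_2$-derivative, I would apply \eqref{intwopderiv} to the \emph{outer} operator $\mathcal{Y}^1$, so that $L(-1)$ acts on its first slot $\mathcal{Y}^2(u_{(1)},x_0)u_{(2)}$. The subtlety here is that this vector is \emph{not} a top-level vector, so I cannot apply \eqref{L-1} directly to it; instead I would use \eqref{L-1} applied to $u_{(2)}\in T(W_2)$ together with the iterate identity \eqref{aon2} (which inserts $g(-1)$ into the second slot) to rewrite $L(-1)\mathcal{Y}^2(u_{(1)},x_0)u_{(2)}$ in terms of $g(0)$-actions. Summing \eqref{aon2} over the orthonormal basis, the first term on its right side combines the two slots on the left of the cut into $\Omega_{1,2}$-type contributions weighted by $x_0^{-1}$, and the cross terms with $u_{(3)}$ assemble into $\Omega_{2,3}/x_2$ and $\Omega_{1,3}/(x_2+x_0)$. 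The main obstacle, and the step I expect to require the most care, is precisely this bookkeeping in the $x_2$-derivative: I must correctly handle the action of $L(-1)$ on the non-top-level intermediate vector, ensuring that all the $g(-1)$ insertions produced by \eqref{L-1} on each tensor factor are converted via \eqref{aon1}--\eqref{aon2} into the clean $g(0)\otimes g(0)$ pairings defining $\Omega_{1,2}$, $\Omega_{1,3}$, $\Omega_{2,3}$, with the geometric series $\sum_{i\geq 0}(-1)^i x_2^{-i-1}x_0^i=(x_2+x_0)^{-1}$ accounting for the $\Omega_{1,3}$ denominators.

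Throughout, I would rely on the fact (used already in the proof of Lemma \ref{aon1and2}) that when paired with $u_{(4)}'\in T(W_4')$, all terms involving creation operators $g(-n)$ with $n\geq 1$ vanish, since their adjoints $-g(n)$ annihilate the top-level vector $u_{(4)}'$; this is what lets the infinite sums in \eqref{together} and \eqref{together'} collapse to closed-form rational functions. Finally, for the product equations \eqref{KZprod1}--\eqref{KZprod2}, I would note that the same method applies, differentiating $\psi=\langle u_{(4)}',\mathcal{Y}_1(\cdot,x_1)\mathcal{Y}_2(\cdot,x_2)\cdot\rangle$ in $x_1$ and $x_2$ and using the analogues of \eqref{aon1}--\eqref{aon2} for a product (rather than iterate) of intertwining operators; since the excerpt remarks that a vertex-algebraic derivation of the product equations already appears in \cite{HL4}, I would either reproduce the short parallel computation or simply cite \cite{HL4}, verifying only that the signs and the denominator $x_1-x_2$ (reflecting the geometric expansion $\sum_{i\geq 0}x_2^i x_1^{-i-1}=(x_1-x_2)^{-1}$ in the region $|x_1|>|x_2|$) come out as stated.
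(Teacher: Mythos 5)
Your proposal is correct and is essentially the paper's own proof: \eqref{KZit0} is obtained exactly as you describe, from the $L(-1)$-derivative property for $\mathcal{Y}^2$, formula \eqref{L-1}, and summing \eqref{aon1} over the orthonormal basis, while \eqref{KZit2} is obtained by distributing $L(-1)$ over the two top-level slots $u_{(1)}$ and $u_{(2)}$ and using \eqref{aon2} together with the already-established $x_0$-equation (whose $\Omega_{1,2}/x_0$ term cancels the $-\Omega_{1,2}/x_0$ produced by \eqref{aon2}), with the product system \eqref{KZprod1}--\eqref{KZprod2} deferred to \cite{HL4} just as the paper does. The one ingredient you should name explicitly in the $x_2$-step is the $L(-1)$-commutator formula $[L(-1),\mathcal{Y}^2(u_{(1)},x_0)]=\mathcal{Y}^2(L(-1)u_{(1)},x_0)$ (a consequence of the Jacobi identity with $v=\omega$), since this is what legitimizes moving $L(-1)$ off the non-top-level vector $\mathcal{Y}^2(u_{(1)},x_0)u_{(2)}$ onto the top-level factors, and it is also why the $\Omega_{1,3}/(x_2+x_0)$ term in \eqref{KZit2} arises from the $u_{(1)}$-contribution (via \eqref{aon1}) rather than from \eqref{aon2} itself.
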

\begin{proof}
 To derive \eqref{KZit0}, we use the $L(-1)$-derivative property 
\eqref{intwopderiv}, \eqref{L-1}, and \eqref{aon1} to obtain
 \begin{align}\label{KZit0calc}
  (\ell+h^\vee) & \dfrac{\partial}{\partial x_0}\langle 
u_{(4)}',\mathcal{Y}^1(\mathcal{Y}^2(u_{(1)},x_0)u_{(2)},x_2)u_{(3)}\rangle 
\nonumber\\
  & =(\ell+h^\vee)\langle 
u_{(4)}',\mathcal{Y}^1(\mathcal{Y}^2(L(-1)u_{(1)},x_0)u_{(2)},x_2)u_{(3)}
\rangle\nonumber\\
  & =\sum_{i=1}^{\mathrm{dim}\,\g}\langle 
u_{(4)}',\mathcal{Y}^1(\mathcal{Y}^2(\gamma_i(-1)\gamma_i(0)u_{(1)},x_0)u_{(2)},
x_2)u_{(3)}\rangle\nonumber\\
  & =\sum_{i=1}^{\mathrm{dim}\,\g} x_0^{-1}\langle 
u_{(4)}',\mathcal{Y}^1(\mathcal{Y}^2(\gamma_i(0)u_{(1)},x_0)\gamma_i(0)u_{(2)},
x_2)u_{(3)}\rangle\nonumber\\
  & \;\;\;\;+\sum_{i=1}^{\mathrm{dim}\,\g} (x_2+x_0)^{-1}\langle 
u_{(4)}',\mathcal{Y}^1(\mathcal{Y}^2(\gamma_i(0)u_{(1)},x_0)u_{(2)},
x_2)\gamma_i(0)u_{(3)}\rangle
 \end{align}
for any $u_{(1)}\in T(W_1)$, $u_{(2)}\in T(W_2)$, $u_{(3)}\in T(W_3)$, and 
$u_{(4)}'\in T(W_4')$, and \eqref{KZit0} follows.

To derive \eqref{KZit2}, we will need the $L(-1)$-commutator formula
\begin{equation*}
 [L(-1),\mathcal{Y}(w,x)]=\mathcal{Y}(L(-1)w,x)
\end{equation*}
which follows from the Jacobi identity \eqref{intwopjac} with $v=\omega$ and 
holds for any intertwining operator $\mathcal{Y}$. Using this, the 
$L(-1)$-derivative property \eqref{intwopderiv}, and \eqref{L-1},
\begin{align}\label{KZit2calc}
 (\ell+h^\vee) & \dfrac{\partial}{\partial x_2}\langle 
u_{(4)}',\mathcal{Y}^1(\mathcal{Y}^2(u_{(1)},x_0)u_{(2)},x_2)u_{(3)}\rangle 
\nonumber\\
  & =(\ell+h^\vee)\langle 
u_{(4)}',\mathcal{Y}^1(L(-1)\mathcal{Y}^2(u_{(1)},x_0)u_{(2)},x_2)u_{(3)}
\rangle\nonumber\\
  & = (\ell+h^\vee)\langle 
u_{(4)}',\mathcal{Y}^1(\mathcal{Y}^2(u_{(1)},x_0)L(-1)u_{(2)},x_2)u_{(3)}
\rangle\nonumber\\
  & \;\;\;\;+(\ell+h^\vee)\langle 
u_{(4)}',\mathcal{Y}^1(\mathcal{Y}^2(L(-1)u_{(1)},x_0)u_{(2)},x_2)u_{(3)}
\rangle\nonumber\\
  & = \sum_{i=1}^{\mathrm{dim}\,\g}\langle 
u_{(4)}',\mathcal{Y}^1(\mathcal{Y}^2(u_{(1)},x_0)\gamma_i(-1)\gamma_i(0)u_{(2)},
x_2)u_{(3)}\rangle\nonumber\\
  & \;\;\;\;+(\ell+h^\vee)\dfrac{\partial}{\partial x_0}\langle 
u_{(4)}',\mathcal{Y}^1(\mathcal{Y}^2(u_{(1)},x_0)u_{(2)},x_2)u_{(3)}\rangle
\end{align}
for any $u_{(1)}\in T(W_1)$, $u_{(2)}\in T(W_2)$, $u_{(3)}\in T(W_3)$, and 
$u_{(4)}'\in T(W_4')$. Using \eqref{aon2}, the first term on the right side of 
\eqref{KZit2calc} becomes
\begin{align*}
 & x_2^{-1}\sum_{i=1}^{\mathrm{dim}\,\g}\langle 
u_{(4)}',\mathcal{Y}^1(\mathcal{Y}^2(u_{(1)},x_0)\gamma_i(0)u_{(2)},
x_2)\gamma_i(0)u_{(3)}\rangle\nonumber\\
 & -x_0^{-1}\sum_{i=1}^{\mathrm{dim}\,\g}\langle 
u_{(4)}',\mathcal{Y}^1(\mathcal{Y}^2(\gamma_i(0)u_{(1)},x_0)\gamma_i(0)u_{(2)},
x_2)u_{(3)}\rangle.
\end{align*}
Combining this with \eqref{KZit0calc} and \eqref{KZit2calc} yields 
\eqref{KZit2}.
 
 The derivation of \eqref{KZprod1} and \eqref{KZprod2}, which requires an 
analogue of Lemma \ref{aon1and2}, is analogous, and we omit it here. See for 
instance \cite{HL4} for details.
\end{proof}
\begin{rema}
 The equation \eqref{KZit0} is heuristically the same as \eqref{KZprod1} under 
the identification $x_0=x_1-x_2$. But one cannot substitute $x_0=x_1-x_2$ in 
\eqref{KZit0} because $(x_2+(x_1-x_2))^{-1}$ is not a well-defined formal 
series.
\end{rema}

Although we cannot substitute $x_1-x_2$ for $x_0$ in 
$\varphi_{\mathcal{Y}^1,\mathcal{Y}^2}=\langle\cdot,\mathcal{Y}^1(\mathcal{Y}
^2(\cdot,x_0)\cdot,x_2)\cdot\rangle$ or in \eqref{KZit0}-\eqref{KZit2}, we can 
substitute $x_2=x_1-x_0$. Then we have
\begin{corol}\label{KZit01}
 For any $u_{(4)}'\in T(W_4')$, define
 \begin{equation*}
\widetilde{\varphi}_{\mathcal{Y}^1,\mathcal{Y}^2}(u_{(4)}')=\varphi_{\mathcal{Y}
^1,\mathcal{Y}^2}(u_{(4)}')\vert_{x_2=x_1-x_0}=\langle 
u_{(4)}',\mathcal{Y}^1(\mathcal{Y}^2(\cdot,x_0)\cdot,x_1-x_0)\cdot\rangle.
\end{equation*}
Then $\widetilde{\varphi}_{\mathcal{Y}^1,\mathcal{Y}^2}(u_{(4)}')$ satisfies the 
system of formal differential equations
\begin{align}
 (\ell+h^\vee)\dfrac{\partial\varphi}{\partial x_0} & 
=\left(\dfrac{\Omega_{1,2}}{x_0}-\dfrac{\Omega_{2,3}}{x_1-x_0}
\right)\varphi\label{KZit0'}\\
 (\ell+h^\vee)\dfrac{\partial\varphi}{\partial x_1} & 
=\left(\dfrac{\Omega_{2,3}}{x_1-x_0}+\dfrac{\Omega_{1,3}}{x_1}
\right)\varphi.\label{KZit1}
\end{align}
\end{corol}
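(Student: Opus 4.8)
The plan is to regard $\widetilde{\varphi}_{\mathcal{Y}^1,\mathcal{Y}^2}(u_{(4)}')$ as the image of $\varphi_{\mathcal{Y}^1,\mathcal{Y}^2}(u_{(4)}')$ under the formal substitution $x_2=x_1-x_0$ and to derive \eqref{KZit0'} and \eqref{KZit1} from the already-established equations \eqref{KZit0} and \eqref{KZit2} by the chain rule. First I would make the substitution precise: each power $x_2^{\gamma}$ occurring in $\varphi$ is replaced by the binomial expansion $(x_1-x_0)^{\gamma}=\sum_{k\geq 0}\binom{\gamma}{k}(-x_0)^k x_1^{\gamma-k}$ in non-negative powers of $x_0$. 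Because the powers of $x_0$ in $\varphi$ are bounded below and, for each fixed power of $x_0$, the powers of $x_2$ are bounded below (by lower truncation for $\mathcal{Y}^1$ and $\mathcal{Y}^2$), collecting terms after this substitution produces only finite sums, so $\widetilde{\varphi}$ is a well-defined element of $(T(W_1)\otimes T(W_2)\otimes T(W_3))^*\{x_0,x_1\}$.

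The one genuinely non-formal point is the effect of the substitution on the singular coefficients, which I would check directly. The term $\frac{1}{x_2}$ simply becomes $\frac{1}{x_1-x_0}=\sum_{k\geq 0}x_0^k x_1^{-k-1}$, while the term $\frac{1}{x_2+x_0}$, which as in the computation following \eqref{together} is expanded as $\sum_{i\geq 0}(-1)^i x_0^i x_2^{-i-1}$, becomes $\frac{1}{x_1}$. The latter is the key identity: substituting $x_2=x_1-x_0$ and using $\binom{-i-1}{m-i}=(-1)^{m-i}\binom{m}{i}$, the coefficient of $x_0^m x_1^{-m-1}$ equals $\sum_{i=0}^m(-1)^i\binom{m}{i}=(1-1)^m$, which vanishes for $m\geq 1$, so the double sum collapses to $x_1^{-1}$.

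With the substitution understood, the chain rule (valid termwise in this formal-series setting) gives $\frac{\partial\widetilde{\varphi}}{\partial x_1}=\bigl(\frac{\partial\varphi}{\partial x_2}\bigr)\big|_{x_2=x_1-x_0}$ and $\frac{\partial\widetilde{\varphi}}{\partial x_0}=\bigl(\frac{\partial\varphi}{\partial x_0}-\frac{\partial\varphi}{\partial x_2}\bigr)\big|_{x_2=x_1-x_0}$. Substituting \eqref{KZit2} into the first relation and using $\frac{1}{x_2}\mapsto\frac{1}{x_1-x_0}$ and $\frac{1}{x_2+x_0}\mapsto\frac{1}{x_1}$ yields \eqref{KZit1} at once. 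Substituting \eqref{KZit0} and \eqref{KZit2} into the second relation, the two $\frac{\Omega_{1,3}}{x_1}$ contributions cancel, leaving $(\ell+h^\vee)\frac{\partial\widetilde{\varphi}}{\partial x_0}=\bigl(\frac{\Omega_{1,2}}{x_0}-\frac{\Omega_{2,3}}{x_1-x_0}\bigr)\widetilde{\varphi}$, which is \eqref{KZit0'}.

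The main thing to be careful about is not the algebra, which is routine, but justifying that the formal substitution $x_2=x_1-x_0$ is well-defined and commutes with the formal derivatives, that is, that the chain rule is legitimate here; the potential pitfall is that a careless expansion of a singular term such as $\frac{1}{x_2+x_0}$ could fail to collapse to $\frac{1}{x_1}$. Once the expansion convention (non-negative powers of $x_0$) is fixed and the identity $\frac{1}{x_2+x_0}\big|_{x_2=x_1-x_0}=\frac{1}{x_1}$ is verified as above, everything else is bookkeeping.
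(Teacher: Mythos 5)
Your proposal is correct and follows essentially the same route as the paper: the paper's proof consists precisely of the two chain-rule identities $\frac{\partial\widetilde{\varphi}}{\partial x_0}=\bigl(\frac{\partial\varphi}{\partial x_0}-\frac{\partial\varphi}{\partial x_2}\bigr)\big|_{x_2=x_1-x_0}$ and $\frac{\partial\widetilde{\varphi}}{\partial x_1}=\bigl(\frac{\partial\varphi}{\partial x_2}\bigr)\big|_{x_2=x_1-x_0}$, combined with \eqref{KZit0} and \eqref{KZit2}. Your explicit verification that $(x_2+x_0)^{-1}\big|_{x_2=x_1-x_0}=x_1^{-1}$ via the binomial identity is a detail the paper leaves implicit (it is exactly why the substitution $x_2=x_1-x_0$ is legitimate while $x_0=x_1-x_2$ is not, as the paper's preceding remark notes), and including it is a welcome strengthening rather than a departure.
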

\begin{proof}
 This follows from \eqref{KZit0} and \eqref{KZit2} since
 \begin{equation*}
  \dfrac{\partial}{\partial 
x_0}\widetilde{\varphi}_{\mathcal{Y}^1,\mathcal{Y}^2}(u_{(4)}
')=\left.\left(\dfrac{\partial}{\partial 
x_0}\varphi_{\mathcal{Y}^1,\mathcal{Y}^2}(u_{(4)}')\right)\right\vert_{
x_2=x_1-x_0}-\left.\left(\dfrac{\partial}{\partial 
x_2}\varphi_{\mathcal{Y}^1,\mathcal{Y}^2}(u_{(4)}')\right)\right\vert_{
x_2=x_1-x_0}
 \end{equation*}
and
\begin{equation*}
 \dfrac{\partial}{\partial 
x_1}\widetilde{\varphi}_{\mathcal{Y}^1,\mathcal{Y}^2}(u_{(4)}
')=\left.\left(\dfrac{\partial}{\partial 
x_2}\varphi_{\mathcal{Y}^1,\mathcal{Y}^2}(u_{(4)}')\right)\right\vert_{
x_2=x_1-x_0}
\end{equation*}
for any $u_{(4)}'\in T(W_4')$.
\end{proof}

\subsection{Reduction to one variable}

Suppose $W_1$, $W_2$, and $W_3$ are irreducible $\imzero$-modules. Then for each 
$i=1,2,3$, there is some $h_i\in\mathbb{Q}$ which is the lowest conformal weight 
of $W_i$, so that the conformal weights of $W_i$ are contained in 
$h_i+\mathbb{N}$. In fact, we have $h_i=h_{\lambda_i,\ell}$ (recall 
\eqref{hlambdal}) for some dominant integral weight $\lambda_i$ of $\g$.

Now suppose $\mathcal{Y}$ is an intertwining operator of type 
$\binom{W_3}{W_1\,W_2}$. By Remark 5.4.4 in \cite{FHL}, we can write
\begin{equation*}
 \mathcal{Y}(w_{(1)},x)w_{(2)}=\sum_{n\in\Z} o^\mathcal{Y}_n(w_{(1)}\otimes 
w_{(2)}) x^{h-n-1}
\end{equation*}
where $h=h_3-h_1-h_2$. For each $n\in\Z$, we have
\begin{equation}\label{oyops}
 o^\mathcal{Y}_n: (W_1)_{(h_1+k)}\otimes (W_2)_{(h_2+m)}\rightarrow 
(W_3)_{h_3+k+m-n-1}
\end{equation}
for $k,m\in\mathbb{N}$. In particular, $o^\mathcal{Y}_{-1}$ maps $T(W_1)\otimes 
T(W_2)$ to $T(W_3)$. 

Since $\imzero$-modules are completely reducible, an intertwining operator among 
$\imzero$-modules is a sum of intertwining operators among irreducible 
$\imzero$-modules. Thus for any intertwining operator of type 
$\binom{W_3}{W_1\,W_2}$ where $W_1$, $W_2$, and $W_3$ are not necessarily 
irreducible, we can define operators $o^\mathcal{Y}_n$ for $n\in\Z$. In 
particular, $o^\mathcal{Y}_{-1}$ maps $T(W_1)\otimes T(W_2)$ into $T(W_3)$.

Now suppose that $W_1$, $W_2$, $W_3$, $W_4$, $M_1$, and $M_2$ are 
$\imzero$-modules, $\mathcal{Y}_1\in\mathcal{V}^{W_4}_{W_1 M_1}$, 
$\mathcal{Y}_2\in\mathcal{V}^{M_1}_{W_2 W_3}$, 
$\mathcal{Y}^1\in\mathcal{V}^{W_4}_{M_2 W_3}$, and 
$\mathcal{Y}^2\in\mathcal{V}^{M_2}_{W_1 W_2}$. Recalling the operators 
$\Omega_{1,2}$, $\Omega_{1,3}$, and $\Omega_{2,3}$ on $(T(W_1)\otimes 
T(W_2)\otimes T(W_3))^*$ from the previous subsection, we define
\begin{equation*}
 H=\Omega_{1,2}+\Omega_{1,3}+\Omega_{2,3}
\end{equation*}
and $\widetilde{H}=(\ell+h)^{-1} H$. We also recall the functionals 
$\psi_{\mathcal{Y}_1,\mathcal{Y}_2}(u_{(4)}')$ and 
$\widetilde{\varphi}_{\mathcal{Y}^1,\mathcal{Y}^2}(u_{(4)}')$ for $u_{(4)}'\in 
T(W_4')$ from the previous subsection.
\begin{propo}\label{proditshapepropo}
 For any $u_{(4)}'\in T(W_4')$, there exist series 
 \begin{equation*}
  F(x), G(x)\in (T(W_1)\otimes T(W_2)\otimes T(W_3))^*\lbrace x\rbrace
 \end{equation*}
such that
\begin{equation*}
 \psi_{\mathcal{Y}_1,\mathcal{Y}_2}(u_{(4)}')=x_1^{\widetilde{H}} 
F\left(\dfrac{x_2}{x_1}\right) 
\hspace{2em}\text{and}\hspace{2em}\widetilde{\varphi}_{\mathcal{Y}^1,\mathcal{Y}
^2}(u_{(4)}')= x_1^{\widetilde{H}} G\left(\dfrac{x_0}{x_1}\right) .
\end{equation*}
\end{propo}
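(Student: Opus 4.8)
The plan is to exhibit each matrix coefficient as a function homogeneous of ``degree $\widetilde{H}$'' under simultaneous scaling of its two formal variables, and then to split off a single power of $x_1$ by passing to the ratio of the variables.

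First I would combine the two KZ equations that each coefficient already satisfies. Multiplying \eqref{KZprod1} by $x_1$ and \eqref{KZprod2} by $x_2$ and adding, the two terms with denominator $x_1-x_2$ combine, since $\frac{x_1}{x_1-x_2}-\frac{x_2}{x_1-x_2}=1$, to give
\begin{equation*}
(\ell+h^\vee)(x_1\partial_{x_1}+x_2\partial_{x_2})\,\psi_{\mathcal{Y}_1,\mathcal{Y}_2}(u_{(4)}')=(\Omega_{1,2}+\Omega_{1,3}+\Omega_{2,3})\,\psi_{\mathcal{Y}_1,\mathcal{Y}_2}(u_{(4)}')=H\,\psi_{\mathcal{Y}_1,\mathcal{Y}_2}(u_{(4)}'),
\end{equation*}
that is, $(x_1\partial_{x_1}+x_2\partial_{x_2})\,\psi_{\mathcal{Y}_1,\mathcal{Y}_2}(u_{(4)}')=\widetilde{H}\,\psi_{\mathcal{Y}_1,\mathcal{Y}_2}(u_{(4)}')$. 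The same manipulation applied to \eqref{KZit0'} (times $x_0$) and \eqref{KZit1} (times $x_1$), where now the terms with denominator $x_1-x_0$ combine, yields $(x_0\partial_{x_0}+x_1\partial_{x_1})\,\widetilde{\varphi}_{\mathcal{Y}^1,\mathcal{Y}^2}(u_{(4)}')=\widetilde{H}\,\widetilde{\varphi}_{\mathcal{Y}^1,\mathcal{Y}^2}(u_{(4)}')$. Conceptually this homogeneity is the $L(0)$-conjugation property $y^{L(0)}\mathcal{Y}(w,x)y^{-L(0)}=\mathcal{Y}(y^{L(0)}w,yx)$ applied to both factors at once: rescaling both variables by a common $y$ conjugates each intertwining operator by $y^{L(0)}$, the conjugations on the intermediate module cancel, and on the lowest-weight vectors the remaining conjugations act by scalars.

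Next I would change variables from $(x_1,x_2)$ to $x_1$ together with the ratio $t=x_2/x_1$ (and from $(x_0,x_1)$ to $x_1$ and $s=x_0/x_1$). A direct computation shows that in these coordinates the Euler operator becomes $x_1\partial_{x_1}$ at fixed $t$, so the homogeneity equation reduces to the one-variable equation $x_1\partial_{x_1}\,\psi_{\mathcal{Y}_1,\mathcal{Y}_2}(u_{(4)}')=\widetilde{H}\,\psi_{\mathcal{Y}_1,\mathcal{Y}_2}(u_{(4)}')$ with $t$ a parameter. I would then solve this coefficient by coefficient: decomposing the finite-dimensional space $(T(W_1)\otimes T(W_2)\otimes T(W_3))^*$ into $\widetilde{H}$-eigenspaces and projecting the coefficients accordingly, the equation forces every monomial $x_1^{\alpha}t^k$ appearing in the $\nu$-eigenspace to have $\alpha=\nu$. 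Collecting the surviving terms produces exactly the asserted form $\psi_{\mathcal{Y}_1,\mathcal{Y}_2}(u_{(4)}')=x_1^{\widetilde{H}}F(x_2/x_1)$, and likewise $\widetilde{\varphi}_{\mathcal{Y}^1,\mathcal{Y}^2}(u_{(4)}')=x_1^{\widetilde{H}}G(x_0/x_1)$.

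The step I expect to be most delicate is this last one: converting the operator-valued homogeneity equation into a genuine separated element of $(T(W_1)\otimes T(W_2)\otimes T(W_3))^*\{x\}$. The point that makes it clean is that $\widetilde{H}$ acts \emph{semisimply}. Indeed, the operator on $T(W_1)\otimes T(W_2)\otimes T(W_3)$ whose adjoint is $H=\Omega_{1,2}+\Omega_{1,3}+\Omega_{2,3}$ is $\tfrac{1}{2}(C_{123}-C^{(1)}-C^{(2)}-C^{(3)})$, where $C_{123}$ is the Casimir acting through the diagonal $\g$-action and $C^{(j)}$ is the Casimir on the $j$-th tensor factor; since this module is completely reducible, the eigenvalues of $\widetilde{H}$ are the scalars $h_{\mu,\ell}-h_1-h_2-h_3$ indexed by the irreducible isotypic components $L_\mu$ via \eqref{hlambdal}. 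Hence $x_1^{\widetilde{H}}$ carries no logarithms, the exponent matching above is unambiguous, and the resulting $F$ and $G$ are well-defined series in a single variable.
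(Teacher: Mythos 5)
Your proof is correct, but it follows a genuinely different route from the paper's. The paper argues by explicit expansion: after reducing to irreducible modules via complete reducibility, it uses the weight gradation \eqref{oyops} to show that pairing with the top-level vector $u_{(4)}'$ kills all but one diagonal of terms, so that (in the iterate case) the matrix coefficient equals $\sum_{n\geq 0}\langle u_{(4)}', g_n(\cdot)\rangle\, x_0^{h_6-h_1-h_2+n}(x_1-x_0)^{h_4-h_6-h_3-n}$ with each $g_n$ a $\g$-module homomorphism into $T(W_4)$ (by the $n=0$ case of \eqref{intwopcomm3}); the overall scalar factor $x_1^{h_4-h_1-h_2-h_3}$ is then converted into the operator $x_1^{\widetilde{H}}$ by a lemma asserting that $\langle u_{(4)}',f(u)\rangle x^{h_4-h_1-h_2-h_3}=\langle u_{(4)}',f(x^{\widetilde{H}^*}u)\rangle$ for any such homomorphism $f$, proved from $L(0)\vert_{T(W)}=\frac{1}{2(\ell+h^\vee)}C_{T(W)}$ and the decomposition of the diagonal Casimir. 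You instead derive the Euler equation $(x_1\partial_{x_1}+x_2\partial_{x_2})\psi=\widetilde{H}\psi$ (and its analogue in $x_0,x_1$) directly from the KZ systems---the telescoping $\frac{x_1}{x_1-x_2}-\frac{x_2}{x_1-x_2}=1$ is formally valid under the expansion conventions in force---and then separate variables: the Euler equation forces each coefficient of $x_1^a x_2^b$ to be an $\widetilde{H}$-eigenvector of eigenvalue $a+b$, and semisimplicity of $\widetilde{H}$ (your commuting-Casimirs argument, the same one the paper uses for $\widetilde{\Omega}_{1,2}$ at \eqref{omega12tilde}) makes the regrouping into $x_1^{\widetilde{H}}F(x_2/x_1)$ unambiguous. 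Your argument is shorter, needs no reduction to the irreducible case, and proves more: any formal solution of the KZ system (indeed of the Euler equation alone) has the separated form, not just the solutions coming from intertwining operators. What the paper's computation buys is explicitness: the formula \eqref{gx} for $G(x)$, with coefficients built from the $\g$-homomorphisms $g_n$, is exactly what the proof of Theorem \ref{mainassoctheo} later recalls in order to identify the initial datum of the iterate solution with $F_{It}(u_{(4)}')$, so with your proof that expansion would still have to be derived there. Two quibbles that do not affect correctness: your parenthetical description of the eigenvalues of $\widetilde{H}$ as $h_{\mu,\ell}-h_1-h_2-h_3$ presumes the $T(W_i)$ irreducible (for direct sums the $h_i$ vary with the summand), and your $L(0)$-conjugation aside likewise gives scalar homogeneity only block-by-block; only semisimplicity is actually used, and your Casimir argument does supply it.
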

\begin{proof}
 Since $\imzero$-modules are completely reducible, we may assume for simplicity 
that $W_1$, $W_2$, $W_3$, $W_4$, $M_1$, and $M_2$ are all irreducible, with 
lowest conformal weights $h_1,\ldots, h_6$, respectively. We prove the assertion 
for $\widetilde{\varphi}_{\mathcal{Y}^1,\mathcal{Y}^2}$, the proof of the 
assertion for $\psi_{\mathcal{Y}_1,\mathcal{Y}_2}$ being similar. For 
$u_{(1)}\in T(W_1)$, $u_{(2)}\in T(W_2)$, and $u_{(3)}\in T(W_3)$, we have
\begin{align*}
 \langle & \widetilde{\varphi}_{\mathcal{Y}^1,\mathcal{Y}^2}(u_{(4)}'), 
u_{(1)}\otimes u_{(2)}\otimes u_{(3)}\rangle =\langle 
u_{(4)}',\mathcal{Y}^1(\mathcal{Y}^2(u_{(1)},x_0)u_{(2)},x_1-x_0)u_{(3)}
\rangle\\
 & =\sum_{m,n\in\Z}\langle u_{(4)}', 
o^{\mathcal{Y}^1}_m(o^{\mathcal{Y}^2}_n(u_{(1)}\otimes u_{(2)})\otimes u_{(3)}) 
x_0^{h_6-h_1-h_2-n-1} (x_1-x_0)^{h_4-h_6-h_3-m-1}\\
 & =\sum_{n\geq 0} 
o^{\mathcal{Y}^1}_{n-1}(o^{\mathcal{Y}^2}_{-n-1}(u_{(1)}\otimes u_{(2)})\otimes 
u_{(3)}) x_0^{h_6-h_1-h_2+n} (x_1-x_0)^{h_4-h_6-h_3-n}\\
 & =\sum_{n\geq 0} 
o^{\mathcal{Y}^1}_{n-1}(o^{\mathcal{Y}^2}_{-n-1}(u_{(1)}\otimes u_{(2)})\otimes 
u_{(3)}) 
\left(\dfrac{x_0}{x_1}\right)^{h_6-h_1-h_2+n}\left(1-\dfrac{x_0}{x_1}\right)^{
h_4-h_6-h_3-n} x_1^{h_4-h_1-h_2-h_3}.
\end{align*}
using \eqref{oyops}. Thus, for each $n\in\mathbb{N}$, we can define maps
\begin{equation*}
 g_n: T(W_1)\otimes T(W_2)\otimes T(W_3)\rightarrow W_4
\end{equation*}
by
\begin{equation*}
 g_n(u_{(1)}\otimes u_{(2)}\otimes u_{(3)})= 
o^{\mathcal{Y}^1}_{n-1}(o^{\mathcal{Y}^2}_{-n-1}(u_{(1)}\otimes u_{(2)})\otimes 
u_{(3)})
\end{equation*}
for $u_{(1)}\in T(W_1)$, $u_{(2)}\in T(W_2)$, and $u_{(3)}\in T(W_3)$. These 
maps are $\g$-homomorphisms by the $n=0$ case of \eqref{intwopcomm3}. Now we can 
take the series $G(x)$ to be
\begin{equation*}
 G(x)=\sum_{n\geq 0} g_n^*(u_{(4)}') x^{h_6-h_1-h_2+n} (1-x)^{h_4-h_6-h_3-n},
\end{equation*}
where $*$ denotes the adjoint $\g$-module homomorphism.

To complete the proof, we must show that for any $\g$-homomorphism
\begin{equation*}
 f: T(W_1)\otimes T(W_2)\otimes T(W_3)\rightarrow T(W_4),
\end{equation*}
we have
\begin{equation*}
 \langle u_{(4)}', f(u_{(1)}\otimes u_{(2)}\otimes u_{(3)})\rangle 
x^{h_4-h_1-h_2-h_3} =\langle u_{(4)}', f(x^{\widetilde{H}^*}(u_{(1)}\otimes 
u_{(2)}\otimes u_{(3)}))\rangle.
\end{equation*}
Recall from \eqref{L(0)} that if $W$ is an $\imzero$-module, then
\begin{equation*}
 L(0)\vert_{T(W)}=\dfrac{1}{2(\ell+h^\vee)} C_{T(W)}
\end{equation*}
where $C_{T(W)}$ denotes the action on $T(W)$ of the Casimir operator associated 
to the invariant bilinear form $\langle\cdot,\cdot\rangle$ on $\g$. Thus since 
$L(0)$ is self-adjoint and $f$ is a $\g$-homomorphism, we have
\begin{align*}
 \langle u_{(4)}', & f(u_{(1)}\otimes u_{(2)}\otimes u_{(3)})\rangle 
x^{h_4-h_1-h_2-h_3} \\
 & =\langle x^{L(0)} u_{(4)}', f( x^{-L(0)}u_{(1)}\otimes 
x^{-L(0)}u_{(2)}\otimes x^{-L(0)}u_{(3)}\rangle\\
 & = \langle u_{(4)}', x^{L(0)} f( x^{-L(0)}u_{(1)}\otimes 
x^{-L(0)}u_{(2)}\otimes x^{-L(0)}u_{(3)}\rangle\\
 & =\left\langle u_{(4)}', 
x^{\frac{C_{T(W_4)}}{2(\ell+h^\vee)}}f\left(x^{-\frac{C_{T(W_1)}}{2(\ell+h^\vee)
}} u_{(1)}\otimes x^{-\frac{C_{T(W_2)}}{2(\ell+h^\vee)}} u_{(2)}\otimes 
x^{-\frac{C_{T(W_3)}}{2(\ell+h^\vee)}} u_{(3)}\right)\right\rangle \\
 & =\left\langle u_{(4)}', f\left( x^{\frac{C_{T(W_1)\otimes T(W_2)\otimes 
T(W_3)}}{2(\ell+h^\vee)}}\left(x^{-\frac{C_{T(W_1)}}{2(\ell+h^\vee)}} 
u_{(1)}\otimes x^{-\frac{C_{T(W_2)}}{2(\ell+h^\vee)}} u_{(2)}\otimes 
x^{-\frac{C_{T(W_3)}}{2(\ell+h^\vee)}} u_{(3)}\right)\right)\right\rangle. 
\end{align*}
But it is clear from the action of $\g$ on tensor products of $\g$-modules that
\begin{align*}
 C_{T(W_1)\otimes T(W_2)\otimes T(W_3)} & =C_{T(W_1)}\otimes 1_{T(W_2)}\otimes 
1_{T(W_3)}+1_{T(W_1)}\otimes C_{T(W_2)}\otimes 1_{T(W_3)}\nonumber\\
 &\;\;\;\; +1_{T(W_1)}\otimes 1_{T(W_2)}\otimes C_{T(W_3)}
+2 (\Omega_{1,2}+\Omega_{1,3}+\Omega_{2,3})^*,
\end{align*}
hence by the definition of $\widetilde{H}$,
\begin{equation*}
 x^{\widetilde{H}^*} (u_{(1)}\otimes u_{(2)}\otimes u_{(3)})= 
x^{\frac{C_{T(W_1)\otimes T(W_2)\otimes 
T(W_3)}}{2(\ell+h^\vee)}}\left(x^{-\frac{C_{T(W_1)}}{2(\ell+h^\vee)}} 
u_{(1)}\otimes x^{-\frac{C_{T(W_2)}}{2(\ell+h^\vee)}} u_{(2)}\otimes 
x^{-\frac{C_{T(W_3)}}{2(\ell+h^\vee)}} u_{(3)}\right),
\end{equation*}
for any $u_{(1)}\in T(W_1)$, $u_{(2)}\in T(W_2)$, and $u_{(3)}\in T(W_3)$, 
completing the proof.
\end{proof}

The significance of the preceding proposition is illustrated by the following:
\begin{propo}
 A series of the form $x_1^{\widetilde{H}} F(\frac{x_2}{x_1})$ where $F(x)\in 
(T(W_1)\otimes T(W_2)\otimes T(W_3))^*\lbrace x\rbrace $ solves 
\eqref{KZprod1}-\eqref{KZprod2} if and only if $F(x)$ satisfies the formal 
differential equation
 \begin{equation}\label{KZonevarprod}
  (\ell+h^\vee)\dfrac{d}{dx} F(x) 
=\left(\dfrac{\Omega_{2,3}}{x}-\dfrac{\Omega_{1,2}}{1-x}\right) F(x).
 \end{equation}
Similarly, a series of the form $x_1^{\widetilde{H}} G(\frac{x_0}{x_1})$ where 
$G(x)\in (T(W_1)\otimes T(W_2)\otimes T(W_3))^*\lbrace x\rbrace $ solves 
\eqref{KZit0'}-\eqref{KZit1} if and only if $G(x)$ solves
\begin{equation}\label{KZonevarit}
 (\ell+h^\vee)\dfrac{d}{dx}G(x) 
=\left(\dfrac{\Omega_{1,2}}{x}-\dfrac{\Omega_{2,3}}{1-x}\right) G(x).
\end{equation}
\end{propo}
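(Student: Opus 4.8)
The plan is to prove the claim by a direct substitution, treating the product case in detail; the iterate case is identical after replacing $x_2/x_1$ by $x_0/x_1$ and interchanging the roles of the relevant $\Omega$-operators. Writing $u=x_2/x_1$ and $\psi=x_1^{\widetilde{H}}F(u)$, I would plug $\psi$ into \eqref{KZprod1} and \eqref{KZprod2} and compare both sides after this change of variables, using the chain rule together with $\partial_{x_1}u=-u/x_1$ and $\partial_{x_2}u=1/x_1$.

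The key structural input, which I would establish first, is that $H=\Omega_{1,2}+\Omega_{1,3}+\Omega_{2,3}$, and hence $\widetilde{H}$, commutes with each of $\Omega_{1,2}$, $\Omega_{1,3}$, and $\Omega_{2,3}$. This follows from the identity $C_{T(W_1)\otimes T(W_2)\otimes T(W_3)}=C_{T(W_1)}+C_{T(W_2)}+C_{T(W_3)}+2H^*$ recorded in the proof of Proposition \ref{proditshapepropo}: each $\Omega_{i,j}^*$ commutes with every individual Casimir $C_{T(W_k)}$ by centrality of the Casimir in $U(\g)$, and with the total Casimir because $\Omega_{i,j}^*$ is invariant under the diagonal action of $\g$; hence each $\Omega_{i,j}^*$ commutes with $H^*$, and passing to adjoints gives $[\Omega_{i,j},H]=0$. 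Consequently $x_1^{\widetilde{H}}$ commutes with each $\Omega_{i,j}$, so it may be moved to the far left past the coefficient operators on the right-hand sides of \eqref{KZprod1}--\eqref{KZprod2}. Since $x_1^{\widetilde{H}}$ is an invertible operator-valued power of $x_1$ with inverse $x_1^{-\widetilde{H}}$, the common factor $x_1^{\widetilde{H}-1}$ appearing below can be cancelled, and this cancellation is reversible.

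For \eqref{KZprod2}, the chain rule gives $\partial_{x_2}\psi=x_1^{\widetilde{H}-1}F'(u)$, while the right-hand side, after substituting $x_2=ux_1$, $x_1-x_2=x_1(1-u)$ and commuting $x_1^{\widetilde{H}}$ to the left, equals $x_1^{\widetilde{H}-1}\bigl(\Omega_{2,3}/u-\Omega_{1,2}/(1-u)\bigr)F(u)$. Cancelling $x_1^{\widetilde{H}-1}$ shows that $\psi$ satisfies \eqref{KZprod2} if and only if $F$ satisfies \eqref{KZonevarprod}. For \eqref{KZprod1}, the analogous computation gives $(\ell+h^\vee)\partial_{x_1}\psi=x_1^{\widetilde{H}-1}\bigl(HF(u)-(\ell+h^\vee)\,u\,F'(u)\bigr)$, using $(\ell+h^\vee)\widetilde{H}=H$, whereas its right-hand side becomes $x_1^{\widetilde{H}-1}\bigl(\Omega_{1,2}/(1-u)+\Omega_{1,3}\bigr)F(u)$. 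Substituting the relation $(\ell+h^\vee)\,u\,F'(u)=\bigl(\Omega_{2,3}-u\,\Omega_{1,2}/(1-u)\bigr)F(u)$ coming from \eqref{KZonevarprod} and simplifying $\Omega_{1,2}+u\,\Omega_{1,2}/(1-u)=\Omega_{1,2}/(1-u)$ shows that \eqref{KZprod1} holds automatically once \eqref{KZonevarprod} does.

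Combining these computations yields the asserted equivalence: if $F$ solves \eqref{KZonevarprod} then $\psi$ solves both \eqref{KZprod1} and \eqref{KZprod2}, while conversely a solution of the system satisfies \eqref{KZprod2}, which by the above forces \eqref{KZonevarprod}. The iterate statement follows verbatim with $v=x_0/x_1$: equation \eqref{KZit0'} reduces to \eqref{KZonevarit}, and \eqref{KZit1} then holds automatically by the same simplification. The only genuinely non-routine point is the commutativity $[\,\widetilde{H},\Omega_{i,j}\,]=0$, since without it $x_1^{\widetilde{H}}$ could not be pulled through the coefficient operators and extra terms would obstruct the reduction; the remaining work is the chain rule and the cancellation of the invertible factor $x_1^{\widetilde{H}-1}$.
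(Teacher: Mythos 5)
Your proposal is correct, and its computational core is the same as the paper's: the chain-rule substitution $u=x_2/x_1$, commuting $x_1^{\widetilde{H}}$ past the coefficient operators, and cancelling the invertible factor $x_1^{\widetilde{H}-1}$. The only organizational difference there is a mirror image: the paper extracts \eqref{KZonevarprod} from the $x_1$-equation \eqref{KZprod1} and then verifies that \eqref{KZprod2} follows from \eqref{KZonevarprod}, whereas you extract it from the $x_2$-equation \eqref{KZprod2} and verify that \eqref{KZprod1} follows; both close the logical loop, and your simplification $\Omega_{1,2}\bigl(1+u/(1-u)\bigr)=\Omega_{1,2}/(1-u)$ is exactly the identity the paper uses in reverse. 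Where you genuinely depart from the paper is in the key lemma $[\widetilde{H},\Omega_{i,j}]=0$ (equivalently, $[\Omega_{1,2},\Omega_{1,3}+\Omega_{2,3}]=0$ and its cyclic analogues, which is precisely what the paper proves). The paper establishes this by an explicit index computation with the structure constants $c^k_{i,j}$, using skew symmetry and invariance of the form $\langle\cdot,\cdot\rangle$. You instead pass to adjoints and use the decomposition $2H^{*}=C_{T(W_1)\otimes T(W_2)\otimes T(W_3)}-\sum_k C_{T(W_k)}$ already recorded in the proof of Proposition \ref{proditshapepropo}: each $\Omega_{i,j}^{*}$ commutes with the one-factor Casimirs by centrality of $C$ in $U(\g)$, and with the total Casimir because $\Omega_{i,j}^{*}$ is a $\g$-endomorphism for the diagonal action. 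This is cleaner and more conceptual, and it reuses structure the paper has in any case (compare \eqref{omega12tilde}); its only cost is that it takes as known the $\g$-invariance of $\Omega_{i,j}^{*}$ — a standard fact, which itself follows from the two-factor Casimir identity $\Omega_{1,2}^{*}=\tfrac{1}{2}\bigl(C_{T(W_1)\otimes T(W_2)}-C_{T(W_1)}\otimes 1-1\otimes C_{T(W_2)}\bigr)\otimes 1$ and centrality again, so no hidden index computation is actually needed. Both routes deliver the same lemma; yours trades the paper's self-contained but fiddly structure-constant manipulation for an argument that makes the commutativity look inevitable.
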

\begin{proof}
 We must first verify that $\widetilde{H}$ commutes with $\Omega_{1,2}$, 
$\Omega_{1,3}$, and $\Omega_{2,3}$. It suffices to verify that
 \begin{equation*}  
[\Omega_{1,2},\Omega_{1,3}+\Omega_{2,3}]=[\Omega_{1,3},\Omega_{1,2}+\Omega_{2,3}
]=[\Omega_{2,3},\Omega_{1,2}+\Omega_{1,3}]=0.
 \end{equation*}
In fact, for any $f\in (T(W_1)\otimes T(W_2)\otimes T(W_3))^*$ and $u_{(1)}\in 
T(W_1)$, $u_{(2)}\in T(W_2)$, $u_{(3)}\in T(W_3)$, we have
\begin{align*}
 \langle [\Omega_{1,2}, & \Omega_{1,3}+\Omega_{2,3}]f, u_{(1)}\otimes 
u_{(2)}\otimes u_{(3)}\rangle =\langle f, 
[\Omega_{1,3}^*+\Omega_{2,3}^*,\Omega_{1,2}^*](u_{(1)}\otimes u_{(2)}\otimes 
u_{(3)})\rangle\\
 & =\sum_{i,j=1}^{\mathrm{dim}\,\g}\langle f, (\gamma_i \gamma_j u_{(1)}\otimes 
\gamma_j u_{(2)}+\gamma_j u_{(1)}\otimes \gamma_i \gamma_j u_{(2)})\otimes 
\gamma_i u_{(3)}\rangle\\
 &\;\;\;\; -\sum_{i,j=1}^{\mathrm{dim}\,g} \langle f, (\gamma_j \gamma_i 
u_{(1)}\otimes \gamma_j u_{(2)}+\gamma_j u_{(1)}\otimes \gamma_j \gamma_i 
u_{(2)})\otimes \gamma_i u_{(3)}\rangle\\
 & =\sum_{i,j=1}^{\mathrm{dim}\,\g} \langle f, [\gamma_i, \gamma_j] 
u_{(1)}\otimes \gamma_j u_{(2)}+\gamma_j u_{(1)}\otimes [\gamma_i, \gamma_j] 
u_{(2)})\otimes \gamma_i u_{(3)}\\
 & =\sum_{i,j,k=1}^{\mathrm{dim}\,\g} \langle f, c^k_{i,j}(\gamma_k 
u_{(1)}\otimes \gamma_j u_{(2)}\otimes \gamma_i u_{(3)})+c^k_{i,j}(\gamma_j 
u_{(1)}\otimes \gamma_k u_{(2)}\otimes \gamma_i u_{(3)})\rangle\\
 & =\sum_{i,j,k=1}^{\mathrm{dim}\,\g} \langle f, (c^k_{i,j}+c^j_{i,k})(\gamma_k 
u_{(1)}\otimes \gamma_j u_{(2)}\otimes \gamma_i u_{(3)})\rangle,
\end{align*}
where $[\gamma_i,\gamma_j]=\sum_{k=1}^{\mathrm{dim}\,\g} c^k_{i,j} \gamma_k$ for 
any $i$, $j$. Now, for any $i$, $j$, and $k$ we have $c^k_{i,j}=-c^k_{j,i}$ by 
skew symmetry and we have $c^k_{i,j}=c^i_{j,k}$ by the invariance of the form 
$\langle\cdot,\cdot\rangle$ on $\g$. This implies that $c^k_{i,j}=-c^j_{i,k}$ 
for any $i$, $j$, and $k$, which proves that 
$[\Omega_{1,2},\Omega_{1,3}+\Omega_{2,3}]=0$. The proofs that 
$[\Omega_{1,3},\Omega_{1,2}+\Omega_{2,3}]=0$ and 
$[\Omega_{2,3},\Omega_{1,2}+\Omega_{1,3}]=0$ are similar.

Now suppose $x_1^{\widetilde{H}} F(\frac{x_2}{x_1})$ satisfies 
\eqref{KZprod1}-\eqref{KZprod2}. Using \eqref{KZprod1} and the fact that 
$\widetilde{H}$ commutes with $\Omega_{1,2}$ and $\Omega_{1,3}$, we have
\begin{align*}
 (\ell+h^\vee)\dfrac{\partial}{\partial x_1} \left(x_1^{\widetilde{H}} 
F\left(\dfrac{x_2}{x_1}\right)\right) & = H x^{\widetilde{H}-1} 
F\left(\dfrac{x_2}{x_1}\right)-(\ell+h^\vee) x_1^{\widetilde{H}-2} x_2 
F'\left(\dfrac{x_2}{x_1}\right)\\
 & = 
x_1^{\widetilde{H}-1}\left(\Omega_{1,3}+\dfrac{\Omega_{1,2}}{1-x_2/x_1}\right) 
F\left(\dfrac{x_2}{x_1}\right),
\end{align*}
so that
\begin{align*}
 (\ell+h^\vee) F'\left(\dfrac{x_2}{x_1}\right) & 
=\dfrac{x_1}{x_2}\left(H-\Omega_{1,3}-\dfrac{\Omega_{1,2}}{1-x_2/x_1}\right) 
F\left(\dfrac{x_2}{x_1}\right)\\
 & =\left(\dfrac{\Omega_{2,3}}{x_2/x_1}-\dfrac{\Omega_{1,2}}{1-x_2/x_1}\right) 
F\left(\dfrac{x_2}{x_1}\right).
\end{align*}
Thus $F(x)$ satisfies \eqref{KZonevarprod}. Conversely, if $F(x)$ satisfies 
\eqref{KZonevarprod}, reversing the steps above shows that $x_1^{\widetilde{H}} 
F(\frac{x_2}{x_1})$ satisfies \eqref{KZprod1}. We need to show that 
$x_1^{\widetilde{H}} F(\frac{x_2}{x_1})$ satisfies \eqref{KZprod2}. Indeed,
\begin{align*}
 (\ell+h^\vee) & \dfrac{\partial}{\partial x_2}\left(x_1^{\widetilde{H}} 
F\left(\dfrac{x_2}{x_1}\right)\right)  
=(\ell+h^\vee)x_1^{\widetilde{H}-1}F'\left(\dfrac{x_2}{x_1}\right)\\
 & 
=x_1^{\widetilde{H}-1}\left(\dfrac{\Omega_{2,3}}{x_2/x_1}-\dfrac{\Omega_{1,2}}{
1-x_2/x_1}\right)F\left(\dfrac{x_2}{x_1}\right)=\left(\dfrac{\Omega_{2,3}}{x_2}
-\dfrac{\Omega_{1,2}}{x_1-x_2}\right) x_1^{\widetilde{H}} 
F\left(\dfrac{x_2}{x_1}\right),
\end{align*}
since $\widetilde{H}$ commutes with $\Omega_{2,3}$ and $\Omega_{1,2}$.  

The proof of the second assertion of the proposition is entirely analogous.
\end{proof}

As an immediate consequence of the preceding two propositions, we have
\begin{corol}\label{proditshape}
 For any $u_{(4)}'\in T(W_4')$, 
$x_1^{-\widetilde{H}}\psi_{\mathcal{Y}_1,\mathcal{Y}_2}(u_{(4)}')$ is a series 
$F(\frac{x_2}{x_1})$ where $F(x)$ satisfies \eqref{KZonevarprod}. Similarly, 
$x_1^{-\widetilde{H}}\widetilde{\varphi}_{\mathcal{Y}^1,\mathcal{Y}^2}(u_{(4)}
')$ is a series $G(\frac{x_0}{x_1})$ where $G(x)$ satisfies \eqref{KZonevarit}.
\end{corol}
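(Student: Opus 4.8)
The plan is to combine the three results already established in this section, since the statement is advertised as a purely formal consequence of them. First I would treat the product functional $\psi_{\mathcal{Y}_1,\mathcal{Y}_2}(u_{(4)}')$. By the theorem establishing the formal KZ equations (Section~\ref{KZeqns}), this functional satisfies the system \eqref{KZprod1}--\eqref{KZprod2}. By Proposition~\ref{proditshapepropo} it admits the factored form $\psi_{\mathcal{Y}_1,\mathcal{Y}_2}(u_{(4)}')=x_1^{\widetilde{H}}\,F(\frac{x_2}{x_1})$ for some series $F(x)\in(T(W_1)\otimes T(W_2)\otimes T(W_3))^*\{x\}$. Multiplying through by the formal operator $x_1^{-\widetilde{H}}$ then strips off the prefactor, so that $x_1^{-\widetilde{H}}\psi_{\mathcal{Y}_1,\mathcal{Y}_2}(u_{(4)}')$ is exactly the series $F(\frac{x_2}{x_1})$, as claimed.

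Next I would identify the one-variable equation satisfied by $F$. Since $x_1^{\widetilde{H}}F(\frac{x_2}{x_1})=\psi_{\mathcal{Y}_1,\mathcal{Y}_2}(u_{(4)}')$ solves \eqref{KZprod1}--\eqref{KZprod2}, the forward implication of the preceding proposition forces $F(x)$ to satisfy the one-variable equation \eqref{KZonevarprod}; this yields the first assertion. The iterate case is entirely parallel: by Corollary~\ref{KZit01} the functional $\widetilde{\varphi}_{\mathcal{Y}^1,\mathcal{Y}^2}(u_{(4)}')$ satisfies \eqref{KZit0'}--\eqref{KZit1}; by the second half of Proposition~\ref{proditshapepropo} it equals $x_1^{\widetilde{H}}G(\frac{x_0}{x_1})$, so that $x_1^{-\widetilde{H}}\widetilde{\varphi}_{\mathcal{Y}^1,\mathcal{Y}^2}(u_{(4)}')=G(\frac{x_0}{x_1})$; and the second half of the preceding proposition forces $G(x)$ to satisfy \eqref{KZonevarit}.

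Since the corollary is an immediate consequence of the two preceding propositions, I do not expect a genuine obstacle. The only point deserving a word of care is the legitimacy of multiplying by $x_1^{-\widetilde{H}}$, i.e.\ the invertibility of the formal operation $x_1^{\widetilde{H}}$. As $\widetilde{H}$ is a fixed linear endomorphism of the finite-dimensional space $(T(W_1)\otimes T(W_2)\otimes T(W_3))^*$, we may write $x_1^{\widetilde{H}}=e^{\widetilde{H}\log x_1}$, which has the evident formal inverse $x_1^{-\widetilde{H}}=e^{-\widetilde{H}\log x_1}$; hence the factored forms supplied by Proposition~\ref{proditshapepropo} can be inverted unambiguously, and the extraction of $F$ and $G$ is well defined. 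With this understood, the proof reduces to the two-line chaining described above.
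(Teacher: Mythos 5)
Your proposal is correct and is essentially the paper's own argument: the paper presents this corollary as an "immediate consequence of the preceding two propositions," and your proof simply spells out that chaining — the formal KZ system (together with Corollary \ref{KZit01}) gives the equations satisfied by $\psi_{\mathcal{Y}_1,\mathcal{Y}_2}(u_{(4)}')$ and $\widetilde{\varphi}_{\mathcal{Y}^1,\mathcal{Y}^2}(u_{(4)}')$, Proposition \ref{proditshapepropo} supplies the factored forms, and the preceding proposition's equivalence then forces $F$ and $G$ to satisfy \eqref{KZonevarprod} and \eqref{KZonevarit} respectively. Your side remark on inverting $x_1^{\widetilde{H}}=e^{\widetilde{H}\log x_1}$ is a harmless (and valid) extra precaution that the paper leaves implicit.
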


\subsection{Drinfeld associator isomorphisms}\label{Drinassoc}

In this subsection we study solutions to \eqref{KZonevarprod} and 
\eqref{KZonevarit} in an abstract setting. We fix a finite-dimensional 
$\g$-module $W$ and two  diagonalizable $\g$-module endomorphisms $A$ and $B$ of 
$W$. Our goal is to obtain a $\g$-module automorphism of $W$ from 
solutions to the following one-variable version of the KZ equations:
\begin{equation}\label{KZonevar}
 \dfrac{d\varphi}{dx} = \left(\dfrac{A}{x}-\dfrac{B}{1-x}\right)\varphi
\end{equation}
where $\varphi\in W\lbrace x\rbrace [\mathrm{log}\,x]$. 

Suppose $\lbrace\lambda\rbrace$ is the minimal set of eigenvalues of $A$ such that all 
eigenvalues of $A$ are contained in $\cup_\lambda  (\lambda+\mathbb{N})$. Thus for 
each $\lambda$, $\lbrace\lambda+N^\lambda_j\rbrace_{j=0}^{J_\lambda}$ is the set 
of eigenvalues of $A$ in $\lambda+\Z$, where each $N^\lambda_j\in\mathbb{N}$ and
\begin{equation*}
 0=N^\lambda_0<N^\lambda_1<\ldots<N^\lambda_{J_\lambda}.
\end{equation*}
For any eigenvalue $\mu$ of $A$, we use $\pi^A_{\mu}$ to denote projection onto 
the $\mu$-eigenspace of $A$.
\begin{propo}\label{KZpropA}
 For any $w\in W$, there is a unique solution to \eqref{KZonevar} of the form
 \begin{equation}\label{wformsoln}
  \varphi^A_w(x)=\sum_\lambda\sum_{j=0}^{J_\lambda}\sum_{i\geq N_j^\lambda} 
w^{(\lambda)}_{i,j} x^{\lambda+i} (\mathrm{log}\,x)^j\in W\lbrace x\rbrace 
[\mathrm{log}\,x]
 \end{equation}
such that for each $\lambda$, $w^{(\lambda)}_{0,0}=\pi^A_\lambda(w)$ and for 
each $j>0$, 
$\pi^A_{\lambda+N^\lambda_j}(w^{(\lambda)}_{N^\lambda_j,0})=\pi^A_{
\lambda+N^\lambda_j}(w)$.
\end{propo}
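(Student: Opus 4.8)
The plan is to treat \eqref{KZonevar} as a linear ordinary differential equation with a regular singular point at $x=0$ and to solve it by the Frobenius method, the diagonalizability of $A$ being exactly what makes the recursion solvable at every step. Since $B/(1-x)=B\sum_{k\geq 0}x^k$ is analytic at $x=0$, the equation is equivalent to
\begin{equation*}
 \vartheta\varphi=\Big(A-B\sum_{k\geq 1}x^k\Big)\varphi,\qquad \vartheta:=x\frac{d}{dx}.
\end{equation*}
Substituting the ansatz $\varphi=\sum_{\lambda}\sum_{i,j}w^{(\lambda)}_{i,j}x^{\lambda+i}(\mathrm{log}\,x)^j$ and using $\vartheta\big(x^{\lambda+i}(\mathrm{log}\,x)^j\big)=(\lambda+i)x^{\lambda+i}(\mathrm{log}\,x)^j+j\,x^{\lambda+i}(\mathrm{log}\,x)^{j-1}$, I would extract the coefficient of $x^{\lambda+i}(\mathrm{log}\,x)^j$. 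Because exponents in distinct classes $\lambda+\Z$ never coincide and both $\vartheta$ and multiplication by $x^k$ preserve such classes, the resulting system decouples over $\lambda$, so I may fix a single $\lambda$ and suppress it from the notation. The recursion then reads
\begin{equation*}
 (A-\lambda-i)\,w_{i,j}=(j+1)\,w_{i,j+1}+B\sum_{i'=0}^{i-1}w_{i',j}.
\end{equation*}

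Next I would solve this recursion by induction on the offset $i$, regarding the right-hand side (which involves only $w_{i',j}$ with $i'<i$, together with $w_{i,j+1}$) as already known. Since $A$ is diagonalizable, for each eigenvalue $\nu=\lambda+i$ we have $W=\ker(A-\nu)\oplus\mathrm{im}(A-\nu)$; write $P=\pi^A_\nu$ for the projection onto the kernel, $Q=1-P$, and let $R$ be the inverse of $A-\nu$ on $\mathrm{im}(A-\nu)$ extended by $0$ on $\ker(A-\nu)$, so that $(A-\nu)R=Q$ and $RP=PR=0$. When $\lambda+i$ is not an eigenvalue of $A$, the operator $A-\lambda-i$ is invertible and a downward induction on $j$ (starting where all coefficients vanish) determines each $w_{i,j}$ uniquely, with no new freedom and no constraint. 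When $i=N^\lambda_m$ is a resonance, applying $P$ to the recursion gives the solvability condition $(j+1)Pw_{i,j+1}=-P\,B\sum_{i'<i}w_{i',j}$, which determines $Pw_{i,j}$ for all $j\geq 1$ but leaves $Pw_{i,0}$ free, while applying $R$ determines $Qw_{i,j}$. I would then fix the free datum as $Pw_{i,0}=\pi^A_{\nu}(w)$, which is precisely the normalization in the statement; at $i=0$ one checks that $Qw_{0,0}=0$, so there the condition reduces to $w_{0,0}=\pi^A_\lambda(w)$.

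The main obstacle, and the source of the logarithms, is the resonant case. I would verify directly that with $Pw_{i,j+1}:=-\tfrac{1}{j+1}P\,B\sum_{i'<i}w_{i',j}$ and $Qw_{i,j}:=R\big[(j+1)w_{i,j+1}+B\sum_{i'<i}w_{i',j}\big]$ the recursion holds: the $Q$-component because $(A-\nu)R=Q$, and the $P$-component because the solvability condition was imposed. This gives existence for the prescribed initial data, and uniqueness follows since at every step the only undetermined quantities are the kernel components $Pw_{N^\lambda_m,0}$, which the normalization pins down. Finally I would bound the logarithmic degree: letting $d_i$ denote the largest $j$ with $w_{i,j}\neq 0$, an induction on $i$ shows that $d_i$ does not increase across a non-resonant $i$ and increases by at most $1$ when $i$ is a resonance, so $d_i\leq\#\{m:N^\lambda_m\leq i\}-1$. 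Consequently, if $i<N^\lambda_j$ then at most $j$ resonances lie at offsets $\leq i$, forcing $w_{i,j}=0$; hence the $(\mathrm{log}\,x)^j$ term is supported on offsets $i\geq N^\lambda_j$ and the degree in $\mathrm{log}\,x$ is at most $J_\lambda$, exactly matching the claimed form of $\varphi^A_w$.
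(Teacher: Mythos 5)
Your argument is correct, and it runs on the same engine as the paper's proof: after decoupling the congruence classes $\lambda+\Z$, both you and the paper arrive at the identical recursion $(A-\lambda-i)w_{i,j}=(j+1)w_{i,j+1}+B\sum_{i'<i}w_{i',j}$ and exploit diagonalizability of $A$ in the same two ways (invertibility of $A-\lambda-i$ off resonances; the splitting $W=\ker(A-\nu)\oplus\mathrm{im}(A-\nu)$ at a resonance, with the kernel component of $w_{N_j,0}$ as the free datum and the solvability condition creating the logarithms). The difference lies in how the induction is organized. The paper inducts \emph{downward on the logarithm degree} $j'$: at each stage it shows that the equations with $j\geq j'$ admit a solution uniquely parametrized by an eigenvector $w_{N_{j'},j'}$ and the projections $\pi^A_{\lambda+N_j}(w_{N_j,j'})$ for $j>j'$, its equation \eqref{notmueigen} recording how the stage-$(j'+1)$ data are pinned down by stage-$j'$ equations. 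You instead induct \emph{on the power} $i$, resolving each column $(w_{i,j})_j$ by an inner downward induction on $j$ via the projections $P$, $Q$ and the pseudo-inverse $R$, which localizes the whole resonance analysis at each power. Your arrangement buys two things: the check that the constructed series actually solves the recursion is a one-line computation from $(A-\nu)R=Q$ together with the imposed solvability condition; and the support condition $i\geq N^\lambda_j$ for the $(\mathrm{log}\,x)^j$ coefficients, which the paper builds into its ansatz, is \emph{derived} from your bound on the top log degree $d_i$, so your uniqueness holds in the a priori larger space of log-series with exponents in $\lambda+\N$. The paper's arrangement, conversely, makes explicit at every log level exactly which data parametrize the solution space, which is the form in which the proposition is stated and used.
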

\begin{proof}
 Since no two $\lambda$'s are congruent mod $\Z$, any series as in 
\eqref{wformsoln} solves \eqref{KZonevar} if and only if for each $\lambda$, the 
double series over $j$ and $i$ in \eqref{wformsoln} solves \eqref{KZonevar}. 
Thus we may fix a $\lambda$, and suppose that the $A$-eigenvalues in 
$\lambda+\Z$ consist of $\lambda, \lambda+N_1,\ldots,\lambda+N_J$. Then we need 
to show that any solution to \eqref{KZonevar} of the form
 \begin{equation}\label{onelambda}
  \sum_{j=0}^J\sum_{i\geq N_j} w_{i,j} x^{\lambda+i} (\mathrm{log}\,x)^j
 \end{equation}
exists and is uniquely determined by the vectors $w_{0,0}$ and 
$\pi^A_{\lambda+N_j}(w_{N_j,0})$ for $j>0$; moreover, we need to show that 
$w_{0,0}$ is an $A$-eigenvector with eigenvalue $\lambda$.

We observe that the series \eqref{onelambda} solves \eqref{KZonevar} if and only 
if
\begin{align}
 \sum_{j=0}^J\sum_{i\geq N_j} & \left( (\lambda+i-A)w_{i,j} 
x^{\lambda+i}(\mathrm{log}\,x)^j+jw_{i,j} x^{\lambda+i} 
(\mathrm{log}\,x)^{j-1}\right)\nonumber\\
 & =-\dfrac{x}{1-x}\sum_{j=0}^J\sum_{i\geq N_j} Bw_{i,j} 
x^{\lambda+i}(\mathrm{log}\,x)^j.
\end{align}
When we identify powers of $x$ and $\mathrm{log}\,x$ in preceding equation, we 
see that we must show there is a unique solution to the equations
\begin{equation}\label{KZcomps}
 (\lambda+i-A)w_{i,j}+(j+1)w_{i,j+1}=-\sum_{k=N_j}^{i-1} Bw_{k,j}
\end{equation}
for $0\leq j\leq J$ and $i\geq N_j$. It is enough to prove that for any $j'\leq 
J$, the equations \eqref{KZcomps} for $j\geq j'$ have a solution which is 
uniquely determined by the vectors $w_{N_{j'},j'}$ and 
$\pi^A_{\lambda+N_j}(w_{N_j,j'})$, and that moreover $w_{N_{j'},j'}$ is an 
eigenvector of $A$ with eigenvalue $\lambda+N_{j'}$. We can prove this assertion 
by downward induction on $j'$, starting with the base case $j'=J$.

Equations \eqref{KZcomps} for $j=J$ yield
\begin{equation*}
 Aw_{N_J,J}=(\lambda+N_J)w_{N_J,J}
\end{equation*}
and 
\begin{equation*}
 w_{i,J}=(A-(\lambda+i)1_W)^{-1}\sum_{k=N_J}^{i-1} Bw_{k,J}
\end{equation*}
for $i>N_J$. This shows that equations \eqref{KZcomps} for $j=J$ have a solution 
uniquely determined by $w_{N_J,J}$, which must be in the 
$(\lambda+N_J)$-eigenspace for $A$. This proves the base case of the induction.

Now suppose that for some $j'<J$, equations \eqref{KZcomps} for $j>j'$ have a 
solution uniquely determined by the vectors $w_{N_{j'+1},j'+1}$ and 
$\pi^A_{\lambda+N_j}(w_{N_j,j'+1})$ for $j>j'+1$, where $w_{N_{j'+1},j'+1}$ must 
be in the $(\lambda+N_{j'+1})$-eigenspace of $A$. We must show that the 
coefficients $w_{i,j'}$ for $i\geq N_{j'}$ as well as the vectors 
$w_{N_{j'+1},j'+1}$ and $\pi^A_{\lambda+N_j}(w_{N_j,j'+1})$ for $j>j'+1$ are 
uniquely determined by equations \eqref{KZcomps} for $j=j'$ together with the 
vectors $w_{N_{j'},j'}$ and $\pi^A_{\lambda+N_j}(w_{N_j,j'})$ for $j>j'$. We 
must also show that $w_{N_{j'},j'}$ must be in the $(\lambda+N_{j'})$-eigenspace 
for $A$.

In fact, when $i\neq N_j$ for $j\geq j'$, equations \eqref{KZcomps} for $j=j'$ 
yield
\begin{equation}\label{noteigen}
 w_{i,j'}=(A-(\lambda+i)1_W)^{-1}\left((j'+1)w_{i,j'+1}+\sum_{k=N_{j'}}^{i-1} B 
w_{k,j'}\right).
\end{equation}
On the other hand, when $i=N_{j'}$, we have
\begin{equation}\label{loweigen}
 Aw_{N_{j'},j'}=(\lambda+N_{j'})w_{N_{j'},j'},
\end{equation}
and when $i=N_j$ for $j>j'$, projecting \eqref{KZcomps} to the eigenspaces of 
$A$ yields
\begin{equation}\label{mueigen} 
\pi^A_\mu(w_{N_j,j'})=\dfrac{1}{\mu-\lambda-N_j}\left((j'+1)\pi^A_{\mu}(w_{N_j,
j'+1})+\sum_{k=N_{j'}}^{N_j-1}\pi^A_\mu(Bw_{k,j'})\right)
\end{equation}
when $\mu\neq\lambda+N_j$ and 
\begin{equation}\label{notmueigen}
\pi^A_{\lambda+N_j}(w_{N_j,j'+1})=-\dfrac{1}{j'+1}\sum_{k=N_{j'}}^{N_j-1}\pi^A_{
\lambda+N_j}(Bw_{k,j'}).
\end{equation}
Equations \eqref{noteigen}, \eqref{loweigen}, \eqref{mueigen}, and 
\eqref{notmueigen}, together with the induction hypothesis, imply that equations 
\eqref{KZcomps} for $j\geq j'$ have a unique solution determined by the vectors 
$w_{N_{j'},j'}$ and $\pi^A_{\lambda+N_j}(w_{N_j,j'})$ for $j>j'$ whenever 
$w_{N_{j'},j'}$ is in the $(\lambda+N_{j'})$-eigenspace of $A$. This completes 
the proof.
\end{proof}

The space of series $W\lbrace x\rbrace [\mathrm{log}\,x]$ has an obvious 
$\g$-module structure, and because $A$ and $B$ are $\g$-module endomorphisms, 
the subspace $S^{(x)}_{KZ}$ of $W\lbrace x\rbrace [\mathrm{log}\,x]$ consisting 
of solutions to \eqref{KZonevar} is also a $\g$-module. Then the linearity of 
\eqref{KZonevar} and the fact that $A$ and $B$ are $\g$-module endomorphisms 
implies that the correspondence
\begin{equation*}
 \varphi_A^{(x)}: w\mapsto\varphi^A_w(x)
\end{equation*}
defines a $\g$-module homomorphism from $W$ to $S^{(x)}_{KZ}$. Our goal, 
however, is a $\g$-homomorphism from $W$ to a space of $W$-valued analytic 
functions rather than a space of formal series.

We give $W$ the weak topology whereby a sequence $\lbrace w_n\rbrace\subseteq W$ 
converges to an element $w\in W$ if and only if for any $w'\in W^*$, $\langle 
w',w_n\rangle\rightarrow\langle w',w\rangle$ in the usual topology on $\C$. Note 
that the action of $\g$ on $W$ is continuous because if $\lbrace w_n\rbrace$ is 
a sequence converging to $w\in W$, then for any $g\in\g$ and $w'\in W^*$,
\begin{equation*}
 \langle w', g\cdot w_n\rangle=-\langle g\cdot w',w_n\rangle\rightarrow-\langle 
g\cdot w',w\rangle=\langle w',g\cdot w\rangle,
\end{equation*}
so that $\lbrace g\cdot w_n\rbrace$ converges to $g\cdot w$.

The topology on $W$ allows us to speak of analytic functions from $\C$ to $W$. 
We use $S_{KZ}$ to denote the space of analytic $W$-valued solutions of 
\begin{equation}\label{KZanaly}
 \dfrac{d}{dz}\varphi(z)=\left(\dfrac{A}{z}-\dfrac{B}{1-z}\right)\varphi(z)
\end{equation}
on $\C\setminus\left((-\infty,0]\cup [1,\infty)\right)$. Since $A$ and $B$ are 
$\g$-module homomorphisms, $S_{KZ}$ is a $\g$-module with the action of $\g$ 
defined by
\begin{equation*}
 (x\cdot\varphi)(z)=x\cdot\varphi(z)
\end{equation*}
for $x\in\g$, $\varphi\in S_{KZ}$, and $z\in\C\setminus\left((-\infty,0]\cup 
[1,\infty)\right)$.

Since \eqref{KZanaly} is a linear differential equation with regular singular 
points at $0$, $1$, and $\infty$, any formal solution
\begin{equation*}
 \varphi(x)=\sum_{j=0}^J\sum_{n\in\C} w_{j,n} x^n (\mathrm{log}\,x)^j\in 
S_{KZ}^{(x)}
\end{equation*}
induces a solution $\varphi(z)\in S_{KZ}$ which is the analytic extension of the 
(convergent) series
\begin{equation*}
 \sum_{j=0}^J\sum_{n\in\C} w_{j,n} z^n (\mathrm{log}\,z)^j
\end{equation*}
to $\C\setminus\left((-\infty,0]\cup[1,\infty)\right)$ (see for example Appendix 
B of \cite{Kn} for an overview of the theory of linear differential equations 
with regular singular points). Here
the complex numbers $z^n$ and $\mathrm{log}\,z$ are determined using some fixed 
branch of logarithm with a branch cut along the negative real axis. Because the 
action of $\g$ on $W$ is continuous, the correspondence 
$\varphi(x)\mapsto\varphi(z)$ defines a $\g$-module homomorphism from 
$S^{(x)}_{KZ}$ to $S_{KZ}$. Then we get a $\g$-homomorphism
\begin{equation*}
 \varphi_A: w\mapsto\varphi^A_w(z)
\end{equation*}
from $W$ to $S_{KZ}$.
\begin{propo}
 The $\g$-homomorphism $\varphi_A$ is an  isomorphism.
\end{propo}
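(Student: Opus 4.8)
The plan is to prove that $\varphi_A$ is injective and that $W$ and $S_{KZ}$ have equal finite dimension over $\C$; since $\varphi_A$ is already known to be a $\g$-module homomorphism, this will force it to be a linear, hence $\g$-module, isomorphism.

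First I would verify injectivity. Suppose $\varphi^A_w(z)\equiv 0$ on $\C\setminus((-\infty,0]\cup[1,\infty))$. Near $z=0$ the function $\varphi^A_w(z)$ is the analytic continuation of the convergent series $\sum_\lambda\sum_{j}\sum_{i\geq N^\lambda_j} w^{(\lambda)}_{i,j}\,z^{\lambda+i}(\mathrm{log}\,z)^j$ from \eqref{wformsoln}, so this series must vanish identically. The functions $z^\mu(\mathrm{log}\,z)^j$ for distinct pairs $(\mu,j)$ are linearly independent, and by construction the exponents $\lambda+i$ occurring for distinct $\lambda$ lie in distinct cosets of $\Z$, while those within a fixed $\lambda$ are distinct; hence every coefficient $w^{(\lambda)}_{i,j}$ is zero. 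In particular the initial data $\pi^A_\lambda(w)=w^{(\lambda)}_{0,0}$ and $\pi^A_{\lambda+N^\lambda_j}(w)$ that define $\varphi^A_w$ all vanish, and since $A$ is diagonalizable these projections sum to the identity on $W$. Thus $w=0$, and $\varphi_A$ is injective.

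Next I would compute $\dim_\C S_{KZ}$. Equation \eqref{KZanaly} is a first-order linear system $\frac{d}{dz}\varphi=M(z)\varphi$ with $M(z)=A/z-B/(1-z)$ holomorphic on the simply connected domain $\C\setminus((-\infty,0]\cup[1,\infty))$. By the existence-and-uniqueness theorem for linear ODEs, together with the fact that simple connectivity makes analytic continuation of a local solution unambiguous (no monodromy around $0$ or $1$), evaluation at a base point such as $z_0=\tfrac{1}{2}$ gives a linear isomorphism $S_{KZ}\xrightarrow{\sim} W$, $\varphi\mapsto\varphi(z_0)$. Hence $\dim_\C S_{KZ}=\dim_\C W<\infty$.

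Combining the two steps, $\varphi_A$ is an injective $\C$-linear map between finite-dimensional spaces of equal dimension, hence bijective, and being a $\g$-module homomorphism it is a $\g$-module isomorphism. The main obstacle is surjectivity, that is, showing every analytic solution arises from some $w\in W$; the dimension count resolves this cleanly, but one can also argue it directly, since any analytic solution, being a solution of a regular singular point equation at $z=0$, admits a Frobenius expansion of the exact shape \eqref{wformsoln} and is therefore equal to $\varphi^A_w(z)$ for the vector $w$ recovered from its leading coefficients via Proposition \ref{KZpropA}.
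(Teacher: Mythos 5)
Your proposal is correct and takes essentially the same route as the paper: injectivity from the vanishing of all coefficients in the log-power expansion, plus the equality $\dim_\C S_{KZ}=\dim_\C W$ from the theory of linear differential equations (which you usefully make explicit via evaluation at a base point on the simply connected domain). The only caveat is that your ``linear independence of $z^\mu(\mathrm{log}\,z)^j$'' step needs the stronger statement that an \emph{infinite} convergent series of such terms vanishing identically has all coefficients zero; this is exactly what the paper's citation of the ``unique expansion set'' property (Definition 7.5 and Proposition 7.8 of \cite{HLZ5}) supplies.
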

\begin{proof}
 The theory of linear differential equations implies that the dimension of 
$S_{KZ}$ equals the dimension of $W$ so it suffices to show that $\varphi_A$ is 
injective. Thus suppose
 \begin{equation*}
  \varphi^A_w(z)=  \sum_\lambda\sum_{j=0}^{J_\lambda}\sum_{i\geq N_j^\lambda} 
w^{(\lambda)}_{i,j} z^{\lambda+i} (\mathrm{log}\,z)^j=0
 \end{equation*}
for some $w\in W$. Then the fact that 
\begin{equation*}
 \left(\cup_\lambda (\lambda+\mathbb{N})\right)\times\lbrace 
0,1,\ldots,\mathrm{max}_\lambda\,J_\lambda\rbrace
\end{equation*}
 is a unique expansion set (see Definition 7.5 and Proposition 7.8 in 
\cite{HLZ5}) implies that $w^{(\lambda)}_{i,j}=0$ for all $\lambda, i, j$. In 
particular, for each $\lambda$, $w^{(\lambda)}_{0,0}=\pi^A_\lambda(w)=0$ and for 
each $j>0$, 
$\pi^A_{\lambda+N^\lambda_j}(w^{(\lambda)}_{N^\lambda_j,0})=\pi^A_{
\lambda+N^\lambda_j}(w)=0$. Since $\pi^A_\mu(w)=0$ for all eigenvalues $\mu$ of 
$A$, $w=0$, proving injectivity.
\end{proof}

Now we consider the operator $B$ and its eigenvalues. Suppose 
$\lbrace\mu\rbrace$ is the minimal set of eigenvalues of $B$ such that all 
eigenvalues of $B$ are contained in $\cup_\mu (\mu+\mathbb{N})$. Thus for each 
$\mu$, $\lbrace\mu+M^\mu_k\rbrace_{k=0}^{K_\mu}$ is the set of eigenvalues of 
$B$ in $\mu+\Z$, where each $M^\mu_k\in\mathbb{N}$ and
\begin{equation*}
 0=M^\mu_0<M^\mu_1<\ldots<M^\mu_{K_\mu}.
\end{equation*}
For any eigenvalue $\lambda$ of $B$, we use $\pi^B_{\lambda}$ to denote 
projection onto the $\lambda$-eigenspace of $B$. The proof of Proposition 
\ref{KZpropA} yields
\begin{propo}\label{KZpropB}
 For any $w\in W$, there is a unique solution to
 \begin{equation*}
  \dfrac{d}{dy}\varphi(y)=\left(\dfrac{B}{y}-\dfrac{A}{1-y}\right)\varphi(y)
 \end{equation*}
of the form
 \begin{equation*}
  \varphi^B_w(y)=\sum_\mu\sum_{k=0}^{K_\mu}\sum_{i\geq M_k^\mu} w^{(\mu)}_{i,k} 
y^{\mu+i} (\mathrm{log}\,y)^k\in W\lbrace y\rbrace [\mathrm{log}\,y]
 \end{equation*}
such that for each $\mu$, $w^{(\mu)}_{0,0}=\pi^B_\mu(w)$ and for each $k>0$, 
$\pi^B_{\mu+M^\lambda_k}(w^{(\mu)}_{M^\mu_k,0})=\pi^B_{\mu+M^\mu_k}(w)$.
\end{propo}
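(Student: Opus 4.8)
The plan is to observe that Proposition \ref{KZpropB} is exactly Proposition \ref{KZpropA} with the roles of the operators $A$ and $B$ interchanged, so that the argument proving the latter applies verbatim after this relabeling. First I would note that the equation
\[
 \dfrac{d}{dy}\varphi(y)=\left(\dfrac{B}{y}-\dfrac{A}{1-y}\right)\varphi(y)
\]
is obtained from the equation \eqref{KZonevar} by the formal substitution $A\mapsto B$, $B\mapsto A$, and $x\mapsto y$. The hypotheses on $A$ and $B$ are completely symmetric: both are diagonalizable $\g$-module endomorphisms of the finite-dimensional $\g$-module $W$. Consequently the decomposition of the eigenvalues of $B$ into the sets $\lbrace\mu+M^\mu_k\rbrace_{k=0}^{K_\mu}$, together with the projections $\pi^B_\mu$ onto the eigenspaces of $B$, plays exactly the role that the decomposition $\lbrace\lambda+N^\lambda_j\rbrace_{j=0}^{J_\lambda}$ and the projections $\pi^A_\lambda$ played in Proposition \ref{KZpropA}.

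Next I would repeat the coefficient analysis from the proof of Proposition \ref{KZpropA}. Fixing a single $\mu$ and substituting the ansatz for $\varphi^B_w(y)$ into the displayed equation, matching powers of $y$ and $\mathrm{log}\,y$ yields the recursion
\[
 (\mu+i-B)w_{i,k}+(k+1)w_{i,k+1}=-\sum_{l=M^\mu_k}^{i-1} A\,w_{l,k},
\]
which is precisely the image of \eqref{KZcomps} under the substitution above. Because the eigenvalues of $B$ lying in $\mu+\Z$ are exactly $\mu+M^\mu_0<\cdots<\mu+M^\mu_{K_\mu}$, the operator $B-(\mu+i)1_W$ is invertible whenever $\mu+i$ is not one of these, so the downward induction on the logarithmic degree $k$ goes through unchanged: at each step every coefficient is either determined by inverting $B-(\mu+i)1_W$ or is forced, after projecting to the eigenspaces of $B$, to satisfy an eigenvector condition and a consistency condition. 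The initial-data conditions $w^{(\mu)}_{0,0}=\pi^B_\mu(w)$ and $\pi^B_{\mu+M^\mu_k}(w^{(\mu)}_{M^\mu_k,0})=\pi^B_{\mu+M^\mu_k}(w)$ are the exact translates of the conditions in Proposition \ref{KZpropA}.

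I do not expect any genuine obstacle here. The only point requiring a moment's care is to confirm that the proof of Proposition \ref{KZpropA} used nothing about $A$ and $B$ beyond $A$ being diagonalizable---so that its spectrum splits into cosets of $\mathbb{N}$ and projections to eigenspaces are available---and $B$ being an endomorphism commuting with the $\g$-action. Since these assumptions are symmetric in $A$ and $B$, and both operators are diagonalizable by hypothesis, the conclusion transfers immediately and the proposition follows by symmetry.
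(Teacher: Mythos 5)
Your proposal is correct and matches the paper exactly: the paper gives no separate argument, stating only that ``the proof of Proposition \ref{KZpropA} yields'' Proposition \ref{KZpropB}, which is precisely your symmetry observation with the recursion \eqref{KZcomps} rewritten under the swap $A\leftrightarrow B$, $\lambda\leftrightarrow\mu$, $N^\lambda_j\leftrightarrow M^\mu_k$. Your closing remark is also the right sanity check---the coefficient analysis uses only diagonalizability of the operator appearing over the singularity at $0$, so the argument transfers verbatim.
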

Then the same argument as for the operator $A$ shows that there is a $\g$-module 
isomorphism
\begin{equation*}
 \widetilde{\varphi}_B: w\mapsto\varphi^B_w(z)
\end{equation*}
from $W$ to the space $\widetilde{S}_{KZ}$ of $W$-valued analytic solutions of
\begin{equation}\label{KZanalyB}
 \dfrac{d}{dz}\varphi(z)=\left(\dfrac{B}{z}-\dfrac{A}{1-z}\right)\varphi(z)
\end{equation}
on $\C\setminus\left((-\infty,0]\cup [1,\infty)\right)$. Observe that replacing 
$z$ with $1-z$ in \eqref{KZanalyB} gives \eqref{KZanaly}. This means that 
$\varphi(z)\in\widetilde{S}_{KZ}$ if and only if $\varphi(1-z)\in S_{KZ}$, and 
the correspondence $\varphi(z)\mapsto\varphi(1-z)$ is a $\g$-module isomorphism 
from $\widetilde{S}_{KZ}$ to $S_{KZ}$.

As a consequence of the preceding considerations, we have a $\g$-module 
isomorphism
\begin{equation*}
 \varphi_B: w\mapsto\varphi^B_w(1-z)
\end{equation*}
from $W$ to $S_{KZ}$. This allows us to define the $\g$-module automorphism
\begin{equation*}
 \Phi_{KZ}=\varphi_B^{-1}\circ\varphi_A
\end{equation*}
of $W$, which we call the \textit{Drinfeld associator} for the $\g$-module $W$ 
equipped with the diagonalizable $\g$-endomorphisms $A$ and $B$. Observe that 
$\Phi_{KZ}$ is defined by the relation
\begin{equation}\label{DAdefprop}
 \varphi_B\circ\Phi_{KZ}=\varphi_A.
\end{equation}

\section{The associativity isomorphisms in $\mathbf{D}(\g,\ell)$}

In this section we use the results and constructions of the previous section to 
describe the associativity isomorphisms in the category $\mathbf{D}(\g,\ell)$.

\subsection{Associativity of intertwining operators}\label{sub:mainassoctheo}

We recall the convergence and associativity of intertwining operators in 
$\imzero-\mathbf{mod}$, which was proved in Theorem 3.8 in \cite{HL4} using the 
KZ equations:
\begin{theo}\label{intwopassoc} Suppose $W_1$, $W_2$, $W_3$, and $W_4$ are 
$\imzero$-modules.
 \begin{enumerate}
  \item For any $\imzero$-module $M_1$ and intertwining operators 
$\mathcal{Y}_1$ of type $\binom{W_4}{W_1\,M_1}$ and $\mathcal{Y}_2$ of type 
$\binom{M_1}{W_2\,W_3}$, the double series
  \begin{equation*}
   \langle 
w_{(4)}',\mathcal{Y}_1(w_{(1)},z_1)\mathcal{Y}_2(w_{(2)},z_2)w_{(3)}\rangle
  \end{equation*}
converges absolutely for any $w_{(1)}\in W_1$, $w_{(2)}\in W_2$, $w_{(3)}\in 
W_3$, $w_{(4)}'\in W_4'$ and $\vert z_1\vert>\vert z_2\vert >0$.

\item For any $\imzero$-module $M_2$ and intertwining operators $\mathcal{Y}^1$ 
of type $\binom{W_4}{M_2\,W_3}$ and $\mathcal{Y}^2$ of type 
$\binom{M_2}{W_1\,W_2}$, the double series
  \begin{equation*}
   \langle 
w_{(4)}',\mathcal{Y}^1(\mathcal{Y}^2(w_{(1)},z_0)w_{(2)},z_2)w_{(3)}\rangle
  \end{equation*}
converges absolutely for any $w_{(1)}\in W_1$, $w_{(2)}\in W_2$, $w_{(3)}\in 
W_3$, $w_{(4)}'\in W_4'$ and $\vert z_2\vert>\vert z_0\vert >0$.

\item For any $\imzero$-module $M_1$ and intertwining operators $\mathcal{Y}_1$ 
of type $\binom{W_4}{W_1\,M_1}$ and $\mathcal{Y}_2$ of type 
$\binom{M_1}{W_2\,W_3}$, there is an $\imzero$-module $M_2$ and intertwining 
operators $\mathcal{Y}^1$ of type $\binom{W_4}{M_2\,W_3}$ and $\mathcal{Y}^2$ of 
type $\binom{M_2}{W_1\,W_2}$ such that
\begin{equation*}
 \langle 
w_{(4)}',\mathcal{Y}_1(w_{(1)},z_1)\mathcal{Y}_2(w_{(2)},z_2)w_{(3)}\rangle 
=\langle 
w_{(4)}',\mathcal{Y}^1(\mathcal{Y}^2(w_{(1)},z_1-z_2)w_{(2)},z_2)w_{(3)}\rangle
\end{equation*}
for any $w_{(1)}\in W_1$, $w_{(2)}\in W_2$, $w_{(3)}\in W_3$, $w_{(4)}'\in W_4'$ 
and $\vert z_1\vert>\vert z_2\vert>\vert z_1-z_2\vert>0$.

\item For any $\imzero$-module $M_2$ and intertwining operators $\mathcal{Y}^1$ 
of type $\binom{W_4}{M_2\,W_3}$ and $\mathcal{Y}^2$ of type 
$\binom{M_2}{W_1\,W_2}$, there is an $\imzero$-module $M_1$ and intertwining 
operators $\mathcal{Y}_1$ of type $\binom{W_4}{W_1\,M_1}$ and $\mathcal{Y}_2$ of 
type $\binom{M_1}{W_2\,W_3}$ such that
\begin{equation*}
 \langle 
w_{(4)}',\mathcal{Y}^1(\mathcal{Y}^2(w_{(1)},z_0)w_{(2)},z_2)w_{(3)}
\rangle=\langle 
w_{(4)}',\mathcal{Y}_1(w_{(1)},z_0+z_2)\mathcal{Y}_2(w_{(2)},z_2)w_{(3)}\rangle
\end{equation*}
for any $w_{(1)}\in W_1$, $w_{(2)}\in W_2$, $w_{(3)}\in W_3$, $w_{(4)}'\in W_4'$ 
and $\vert z_0+z_2\vert>\vert z_2\vert>\vert z_0\vert>0$.
 \end{enumerate}
\end{theo}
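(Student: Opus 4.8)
The plan is to reduce all four assertions to the case of lowest-conformal-weight vectors, where the one-variable machinery of Section~\ref{Drinassoc} applies, and then to bootstrap back to arbitrary vectors. First I would observe that it suffices to prove everything when $w_{(1)}$, $w_{(2)}$, $w_{(3)}$ and $w_{(4)}'$ are top-level (lowest-weight) vectors of their respective modules. Indeed, every $\imzero$-module is spanned by vectors $g_1(-n_1)\cdots g_k(-n_k)u$ with $u$ top-level and $n_i>0$, and the commutator formula \eqref{intwopcomm2} together with the iterate formula \eqref{intwopit} let one move each creation operator $g(-n)$ successively to the left through the intertwining operators. Each such move either transfers $g(-n)$ to the outermost module, where after pairing with $w_{(4)}'$ it becomes an adjoint mode annihilating the top-level vector $w_{(4)}'$, or replaces an argument by a lower-weight vector $g(i)w_{(j)}$, the accompanying coefficients being rational functions in $z_1,z_2$ (resp.\ $z_0,z_2$) with poles only along $z_1=0$, $z_2=0$, and $z_1=z_2$. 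Thus the general matrix element is a finite $\C$-rational combination of top-level matrix elements, so both absolute convergence on the stated domains and the analytic-continuation identities of parts (3)--(4) propagate from the top-level case.

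Second, for top-level vectors I would invoke Proposition~\ref{proditshapepropo} and Corollary~\ref{proditshape}: the product functional equals $x_1^{\widetilde{H}}F(x_2/x_1)$ with $F$ a formal solution of the one-variable equation \eqref{KZonevarprod}, while the iterate functional equals $x_1^{\widetilde{H}}G(x_0/x_1)$ with $G$ a formal solution of \eqref{KZonevarit}. Both equations are linear first-order systems with regular singular points at $0$, $1$, and $\infty$, so by the standard theory of such systems (the Frobenius method; see Appendix~B of \cite{Kn}) every formal power-series solution converges on the punctured disk $0<|x|<1$. Translating back through $x=x_2/x_1$ and $x=x_0/x_1$ yields absolute convergence of the product on $|z_1|>|z_2|>0$ and of the iterate on $|z_2|>|z_0|>0$, establishing parts (1) and (2).

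Finally, for the associativity statements (3) and (4) I would exploit the fact, noted in Section~\ref{Drinassoc}, that \eqref{KZonevarprod} and \eqref{KZonevarit} are carried into one another by the substitution $x\mapsto 1-x$, which interchanges $\Omega_{1,2}$ and $\Omega_{2,3}$. Hence the iterate solution $G$, initially a series about the singular point $x=0$, analytically continues along a path in $\C\setminus\{0,1\}$ to a genuine solution of the same equation near $x=1$, i.e.\ a series in $1-x$, which is exactly the regime governing products. Since the solution space of the regular-singular system is finite-dimensional and is spanned by the correlation functions attached to products (resp.\ iterates) as the intermediate module and intertwining operators vary, the analytically continued iterate must coincide with a product functional for a suitable module $M_2$ and operators $\mathcal{Y}^1,\mathcal{Y}^2$, and symmetrically for the converse. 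The existence of operators with the correct types is guaranteed by the identification of spaces of intertwining operators with spaces of $A(\imzero)$-homomorphisms in Corollary~\ref{intwopcorol}. I expect the main obstacle to be precisely this completeness step: one must show that every analytically continued iterate is realized by an honest product (and conversely), which requires controlling the full span of correlation functions over all admissible intermediate modules rather than matching one solution at a time, and it is here that the finite reductivity of $\imzero$ and the Zhu-algebra classification of fusion rules are indispensable.
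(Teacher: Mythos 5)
First, for calibration: the paper does not prove Theorem \ref{intwopassoc} at all --- it is quoted verbatim from Theorem 3.8 of \cite{HL4}, so any proof must stand on its own. Your treatment of parts (1) and (2) is essentially the route taken in \cite{HL4}: reduce to top-level vectors via the commutator and iterate formulas, observe that products and iterates on top-level vectors satisfy KZ equations with regular singular points at $0$, $1$, $\infty$, and invoke convergence of formal Frobenius solutions. Modulo routine care (the reduction must be organized as an induction on conformal weights, and the ``rational'' coefficients are really specific binomial expansions that converge on the stated domains), this part of your sketch is sound.

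The genuine gap is in parts (3) and (4), and it is exactly the claim you lean on: that the solution space of the regular-singular system ``is spanned by the correlation functions attached to products (resp.\ iterates) as the intermediate module and intertwining operators vary.'' This is false at positive integral level. The analytic solution space has dimension $\dim\left(T(W_1)\otimes T(W_2)\otimes T(W_3)\right)$, whereas correlation functions span only the subspace of conformal blocks, whose dimension is controlled by the truncated level-$\ell$ fusion rules. Concretely, take $\g=\mathfrak{sl}_2$, $\ell=1$, $W_1=W_2=W_3=L_{\ghat}(1,1)$: since the level-$1$ fusion product of $L_1$ with $L_1$ is $L_0$, the only admissible intermediate module is $L_{\ghat}(1,0)$ and the only target is $W_4=L_{\ghat}(1,1)$, so product functionals span a space of dimension at most $2$, while the KZ solution space is $8$-dimensional. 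Hence knowing that the analytically continued iterate solves the equation near $x=1$ does not place it in the span of product functionals; proving that continuation carries iterate blocks precisely into product blocks \emph{is} the content of the theorem, and no dimension or spanning argument can substitute for it. Your appeal to Corollary \ref{intwopcorol} does not repair this: it computes the dimension of a space of intertwining operators but does not produce an operator whose product equals a prescribed analytic solution, and finite reductivity only makes the span of blocks \emph{smaller} relative to the full solution space, not larger. In \cite{HL4}, together with \cite{H1} and \cite{HLZ6}, this step is where the real work lies: the intermediate module $M_2$ and the operators $\mathcal{Y}^1$, $\mathcal{Y}^2$ are constructed directly from the expansion coefficients of the analytically continued product, and the intertwining-operator axioms (lower truncation, Jacobi identity) are verified for these coefficients via the compatibility and expansion-condition machinery --- precisely the machinery your sketch tries to bypass.
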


\begin{rema}\label{assocunique}
 It is easy to see from the universal property of $P(z_2)$-tensor products that 
the module $M_1$ in part 4 of Theorem \ref{intwopassoc} can be taken to be 
$W_2\boxtimes_{P(z_2)} W_3$ and that $\mathcal{Y}_2$ can be taken to be the 
corresponding tensor product intertwining operator. Then the intertwining 
operator $\mathcal{Y}_1$ is uniquely determined by these choices (see Corollary 
9.30 in \cite{HLZ6}). An analogous remark holds for the module $M_2$ and 
intertwining operators $\mathcal{Y}^1$ and $\mathcal{Y}^2$ in part 3 of the 
theorem. 
\end{rema}

We now suppose $\mathcal{Y}_1\in\mathcal{V}^{W_4}_{W_1\,M_1}$, 
$\mathcal{Y}_2\in\mathcal{V}^{M_1}_{W_2\,W_3}$, 
$\mathcal{Y}^1\in\mathcal{V}^{W_4}_{M_2\,W_3}$, and 
$\mathcal{Y}^2\in\mathcal{V}^{M_2}_{W_1\,W_2}$ satisfy the associativity 
relations of Theorem \ref{intwopassoc}. For $u_{(4)}'\in T(W_4')$, we recall 
from Subsection \ref{KZeqns} the series
\begin{equation*}
 \psi_{\mathcal{Y}_1,\mathcal{Y}_2}(u_{(4)}')\in (T(W_1)\otimes T(W_2)\otimes 
T(W_3))^*\lbrace x_1, x_2\rbrace
\end{equation*}
and
\begin{equation*}
 \widetilde{\varphi}_{\mathcal{Y}^1,\mathcal{Y}^2}(u_{(4)}')\in (T(W_1)\otimes 
T(W_2)\otimes T(W_3))^*\lbrace x_0, x_1\rbrace.
\end{equation*}
By Corollary \ref{proditshape}, we know
\begin{equation*}
 \psi_{\mathcal{Y}_1,\mathcal{Y}_2}(u_{(4)}')=x_1^{\widetilde{H}} 
F\left(\dfrac{x_2}{x_1}\right)\hspace{2em}\mathrm{and}\hspace{2em}\widetilde{
\varphi}_{\mathcal{Y}^1,\mathcal{Y}^2}(u_{(4)}')=x_1^{\widetilde{H}} 
G\left(\dfrac{x_0}{x_1}\right)
\end{equation*}
where $F(x)$ satisfies \eqref{KZonevarprod} and $G(x)$ satisfies 
\eqref{KZonevarit}.

We now take the $\g$-module $W$ of Subsection \ref{Drinassoc} to be 
$(T(W_1)\otimes T(W_2)\otimes T(W_3))^*$ and we take the $\g$-module 
endomorphisms in \eqref{KZonevar} to be 
$A=\widetilde{\Omega}_{1,2}=(\ell+h^\vee)^{-1}\Omega_{1,2}$ and 
$B=\widetilde{\Omega}_{2,3}=(\ell+h^\vee)^{-1}\Omega_{2,3}$. It is easy to see that 
$\widetilde{\Omega}_{1,2}$ and $\widetilde{\Omega}_{2,3}$ are diagonalizable. 
For example,
\begin{equation}\label{omega12tilde}
 \widetilde{\Omega}_{1,2}^*=\dfrac{1}{2(\ell+h^\vee)}(C_{T(W_1)\otimes 
T(W_2)}-C_{T(W_1)}\otimes 1_{T(W_2)}-1_{T(W_1)}\otimes C_{T(W_2)})\otimes 
1_{T(W_3)},
\end{equation}
where as earlier $C_U$ for a $\g$-module $U$ is the Casimir operator on $U$. 
Since $C_{T(W_1)\otimes T(W_2)}$, $C_{T(W_1)}\otimes 1_{T(W_2)}$, and 
$1_{T(W_1)}\otimes C_{T(W_2)}$ are all diagonalizable and commute, they are 
simultaneously diagonalizable. Hence $\widetilde{\Omega}_{1,2}^*$ and 
$\widetilde{\Omega}_{1,2}$ are diagonalizable. Similarly, 
$\widetilde{\Omega}_{2,3}$ is diagonalizable. Thus we have the $\g$-module 
automorphism 
$\Phi_{KZ}=\varphi_{\widetilde{\Omega}_{2,3}}^{-1}\circ\varphi_{\widetilde{
\Omega}_{1,2}}$ of $(T(W_1)\otimes T(W_2)\otimes T(W_3))^*$.

For any $u_{(4)}'\in T(W_4')$, we define linear functionals
\begin{equation*}
 F_{Pr}(u_{(4)}'), F_{It}(u_{(4)}')\in (T(W_1)\otimes T(W_2)\otimes T(W_3))^*
\end{equation*}
by
\begin{equation*}
 \langle F_{Pr}(u_{(4)}'),u_{(1)}\otimes u_{(2)}\otimes u_{(3)}\rangle =\langle 
u_{(4)}', o^{\mathcal{Y}_1}_{-1}(u_{(1)}\otimes 
o^{\mathcal{Y}_2}_{-1}(u_{(2)}\otimes u_{(3)}))\rangle
\end{equation*}
and
\begin{equation*}
 \langle F_{It}(u_{(4)}'),u_{(1)}\otimes u_{(2)}\otimes u_{(3)}\rangle =\langle 
u_{(4)}', o^{\mathcal{Y}^1}_{-1}(o^{\mathcal{Y}^2}_{-1}(u_{(1)}\otimes 
u_{(2)})\otimes u_{(3)})\rangle
\end{equation*}
for $u_{(1)}\in T(W_1)$, $u_{(2)}\in T(W_2)$, and $u_{(3)}\in T(W_3)$. The main 
result of this subsection is:
\begin{theo}\label{mainassoctheo}
 For any $u_{(4)}'\in T(W_4')$, $\Phi_{KZ}(F_{It}(u_{(4)}'))=F_{Pr}(u_{(4)}')$.
\end{theo}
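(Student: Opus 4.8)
The plan is to exhibit a single analytic KZ solution that is simultaneously the image of $F_{It}(u_{(4)}')$ under $\varphi_{\widetilde{\Omega}_{1,2}}$ and the image of $F_{Pr}(u_{(4)}')$ under $\varphi_{\widetilde{\Omega}_{2,3}}$, and then to read off the theorem from the defining relation \eqref{DAdefprop} together with the injectivity of $\varphi_{\widetilde{\Omega}_{2,3}}$. Since $\imzero$-modules are completely reducible, I would reduce as in Proposition \ref{proditshapepropo} to the case where $W_1,\dots,W_4,M_1,M_2$ are irreducible, with lowest conformal weights $h_1,\dots,h_6$, and write $A=\widetilde{\Omega}_{1,2}$, $B=\widetilde{\Omega}_{2,3}$, so that $\varphi_A=\varphi_{\widetilde{\Omega}_{1,2}}$ and $\varphi_B=\varphi_{\widetilde{\Omega}_{2,3}}$.

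\emph{Step 1: a single analytic identity.} By Corollary \ref{proditshape} we have $\psi_{\mathcal{Y}_1,\mathcal{Y}_2}(u_{(4)}')=x_1^{\widetilde{H}}F(x_2/x_1)$ and $\widetilde{\varphi}_{\mathcal{Y}^1,\mathcal{Y}^2}(u_{(4)}')=x_1^{\widetilde{H}}G(x_0/x_1)$, where $F$ solves \eqref{KZonevarprod} (the $B$-equation) and $G$ solves \eqref{KZonevarit} (the $A$-equation), each as a series about $x=0$. Under the associativity relation of Theorem \ref{intwopassoc}(3), with $x_1=z_1$, $x_2=z_2$, $x_0=z_1-z_2$, the product and iterate coincide as analytic functions on the nonempty overlap $|z_1|>|z_2|>|z_1-z_2|>0$. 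Since $x_0/x_1=1-x_2/x_1$ and $x_1^{\widetilde{H}}$ is invertible, cancelling it gives $F(x)=G(1-x)$, first on this overlap and then everywhere by analytic continuation.

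\emph{Step 2 (the crux): the initial data are the lowest-weight projections.} I will show $\varphi_A^{-1}(G)=F_{It}(u_{(4)}')$, i.e.\ that the initial datum of the $A$-solution $G$ in the sense of Proposition \ref{KZpropA} is exactly $F_{It}(u_{(4)}')$. The explicit form computed in the proof of Proposition \ref{proditshapepropo} gives $G(x)=\sum_{n\geq 0}g_n^*(u_{(4)}')\,x^{h_6-h_1-h_2+n}(1-x)^{h_4-h_6-h_3-n}$ with $g_0^*(u_{(4)}')=F_{It}(u_{(4)}')$; in particular $G$ has no logarithms and is supported on powers $x^{\lambda_0+\mathbb{N}}$, where $\lambda_0=h_6-h_1-h_2$. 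Because $o^{\mathcal{Y}^2}_{-1}$ maps $T(W_1)\otimes T(W_2)$ into the irreducible $\g$-module $T(M_2)$, a Schur/Casimir argument using \eqref{omega12tilde} shows that $A$ acts on $g_0^*(u_{(4)}')$ by the scalar $\lambda_0$. By Proposition \ref{KZpropA} it then remains to check, for each $N>0$, that the $\pi^A_{\lambda_0+N}$-projection of the coefficient of $x^{\lambda_0+N}$ in $G$ vanishes. That coefficient is a linear combination of the $g_n^*(u_{(4)}')$ with $0\leq n\leq N$, and a nonzero $\pi^A_{\lambda_0+N}$-component of $g_n^*(u_{(4)}')$ would require an irreducible summand $L_{\nu'}\subseteq (M_2)_{(h_6+n)}$ with $h_{\nu',\ell}=h_6+N$. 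A highest weight vector of such an $L_{\nu'}$ contributes a weight $\nu'+\ell\mathbf{k}'-(h_{\nu',\ell}-(N-n))\mathbf{d}'$ of $M_2\cong L(\Lambda)$ with $\nu'\in P_+$ and $N-n\in\mathbb{N}$, so Theorem \ref{weightstheo} forces $n=N=0$, contradicting $N>0$. Hence $G=\varphi_A(F_{It}(u_{(4)}'))$. The identical argument applied to $F$, using the module $M_1$ (lowest weight $h_5$) and Proposition \ref{KZpropB}, shows $F=\varphi^B_{F_{Pr}(u_{(4)}')}$, whence $\varphi_B(F_{Pr}(u_{(4)}'))(z)=\varphi^B_{F_{Pr}(u_{(4)}')}(1-z)=F(1-z)=G(z)$ by Step 1.

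\emph{Conclusion.} Combining the two identifications gives $\varphi_B(F_{Pr}(u_{(4)}'))=\varphi_A(F_{It}(u_{(4)}'))$. On the other hand \eqref{DAdefprop} gives $\varphi_B(\Phi_{KZ}(F_{It}(u_{(4)}')))=\varphi_A(F_{It}(u_{(4)}'))$ as well, so injectivity of $\varphi_B$ yields $\Phi_{KZ}(F_{It}(u_{(4)}'))=F_{Pr}(u_{(4)}')$. I expect the genuine difficulty to be Step 2: showing that the datum recovering a KZ solution via Proposition \ref{KZpropA} is precisely the naive lowest-conformal-weight projection rather than a larger combination of higher coefficients. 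This is exactly what Theorem \ref{weightstheo} is designed to supply, by excluding dominant integral weights of $M_2$ (resp.\ $M_1$) whose conformal weight exceeds the lowest one by a positive integer.
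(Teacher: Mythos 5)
Your proposal is correct and follows essentially the same route as the paper's proof: the crux in both is identifying the initial data of the iterate and product KZ solutions with $F_{It}(u_{(4)}')$ and $F_{Pr}(u_{(4)}')$ via the Casimir eigenvector computation and the weight-exclusion Theorem \ref{weightstheo}, and then invoking associativity of intertwining operators (Theorem \ref{intwopassoc}) together with the defining relation \eqref{DAdefprop} and injectivity of $\varphi_{\widetilde{\Omega}_{2,3}}$. The only difference is cosmetic ordering — you establish the analytic identity $F(x)=G(1-x)$ first and the initial-data identification second, while the paper does the reverse and phrases the final comparison on the open set $1>\vert z\vert>\vert 1-z\vert>0$ at $z_1=1$ — so the two arguments are mathematically identical.
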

\begin{proof}
 The first step in the proof is to show that for any $u_{(4)}'\in T(W_4')$,
\begin{equation}\label{FcomesfromFpr}
 F(x)=\varphi^{\widetilde{\Omega}_{2,3}}_{F_{Pr}(u_{(4)}')}(x),
\end{equation}
using the notation of Proposition \ref{KZpropB}, and
\begin{equation}\label{GcomesfromFit}
 G(x)=\varphi^{\widetilde{\Omega}_{1,2}}_{F_{It}(u_{(4)}')}(x),
\end{equation}
using the notation of Proposition \ref{KZpropA}. Since $\imzero$-modules are 
completely reducible, we may without loss of generality assume that 
the $\imzero$-modules $W_1$, $W_2$, $W_3$, $W_4$, $M_1$, and $M_2$ are irreducible 
with lowest conformal weights $h_1,\ldots, h_6$, respectively. We prove 
\eqref{GcomesfromFit}, the proof of \eqref{FcomesfromFpr} being similar and 
indeed slightly simpler.

Since $G(x)$ satisfies \eqref{KZonevarit}, certainly 
$G(x)=\varphi^{\widetilde{\Omega}_{1,2}}_{w_{It}}(x)$ for some $w_{It}\in 
(T(W_1)\otimes T(W_2)\otimes T(W_3))^*$. We recall from the proof of Proposition 
\ref{proditshapepropo} that
\begin{align}\label{gx}
 G(x) & =\sum_{n\geq 0} g^*_n(u_{(4)}') x^{h_6-h_1-h_2} 
(1-x)^{h_4-h_6-h_3-n}\nonumber\\
 & =\sum_{n\geq 0}\sum_{m\geq 0} (-1)^m\binom{h_4-h_6-h_3-n}{m} g_n^*(u_{(4)}') 
x^{h_6-h_1-h_2+m+n}\nonumber\\
 & =\sum_{n\geq 0}\left(\sum_{m=0}^n (-1)^m\binom{h_4-h_6-h_3-n+m}{m} 
g_{n-m}^*(u_{(4)}')\right) x^{h_6-h_1-h_2+n}
\end{align}
where $g_n^*$ is the adjoint of
\begin{equation*} 
g_n=o^{\mathcal{Y}^1}_{-n-1}(o^{\mathcal{Y}^2}_{n-1}
(\cdot\otimes\cdot)\otimes\cdot).
\end{equation*}

Suppose we use $\lbrace h_6-h_1-h_2+N_j\rbrace_{j=0}^J$, where $N_0=0$ and each 
$N_j\in\mathbb{N}$, to denote the eigenvalues of $\widetilde{\Omega}_{1,2}$ in 
$h_6-h_1-h_2+\mathbb{N}$. Then we see from Proposition \ref{KZpropA} and 
\eqref{gx} that
\begin{equation*}
 w_{It}=\sum_{j=0}^J\sum_{m=0}^{N_j} 
(-1)^m\binom{h_4-h_6-h_3-N_j+m}{m}\pi^{\widetilde{\Omega}_{1,2}}_{
h_6-h_1-h_2+N_j}(g^*_{N_j-m}(u_{(4)}')).
\end{equation*}
Now, $F_{It}(u_{(4)}')=g^*_0(u_{(4)}')$ is an eigenvector of 
$\widetilde{\Omega}_{1,2}$ with eigenvalue $h_6-h_1-h_2$. Indeed, for 
$u_{(1)}\in T(W_1)$, $u_{(2)}\in T(W_2)$ and $u_{(3)}\in T(W_3)$, using 
\eqref{omega12tilde}, \eqref{L(0)}, and the fact that Casimir operators commute 
with the $\g$-module homomorphism $o^{\mathcal{Y}^2}_{-1}$, we have
\begin{align*}
 \langle\widetilde{\Omega}_{1,2}(F_{It}( & u_{(4)}')),  u_{(1)}\otimes 
u_{(2)}\otimes u_{(3)}\rangle\nonumber\\
 &=\dfrac{1}{2(\ell+h^\vee)}\langle 
u_{(4)}',o^{\mathcal{Y}^1}_{-1}(o^{\mathcal{Y}^2}_{-1}(C_{T(W_1)\otimes 
T(W_2)}(u_{(1)}\otimes u_{(2)}))\otimes u_{(3)})\rangle\nonumber\\
 &\;\;\;\;\;-\dfrac{1}{2(\ell+h^\vee)}\langle 
u_{(4)}',o^{\mathcal{Y}^1}_{-1}(o^{\mathcal{Y}^2}_{-1}(C_{T(W_1)}(u_{(1)}
)\otimes u_{(2)}+u_{(1)}\otimes C_{T(W_2)}(u_{(2)}))\otimes 
u_{(3)})\rangle\nonumber\\
 & =\dfrac{1}{2(\ell+h^\vee)}\langle u_{(4)}',o^{\mathcal{Y}^1}_{-1}(C_{T(M_2)} 
o^{\mathcal{Y}^2}_{-1}(u_{(1)}\otimes u_{(2)})\otimes u_{(3)})\rangle\nonumber\\
 &\;\;\;\;\;-(h_1+h_2)\langle F_{It}(u_{(4)}'),u_1\otimes u_2\otimes 
u_3\rangle\nonumber\\
 & =(h_6-h_1-h_2)\langle F_{It}(u_{(4)}'),u_1\otimes u_2\otimes u_3\rangle.
\end{align*}
Thus we have
\begin{equation*}
 w_{It}-F_{It}(u_{(4)}')=\sum_{j=1}^J\sum_{m=0}^{N_j} 
(-1)^m\binom{h_4-h_6-h_3-N_j+m}{m}\pi^{\widetilde{\Omega}_{1,2}}_{
h_6-h_1-h_2+N_j}(g^*_{N_j-m}(u_{(4)}')).
\end{equation*}
Consequently, it is enough to show that 
$\pi^{\widetilde{\Omega}_{1,2}}_{h_6-h_1-h_2+N_j}(g^*_{N_j-m}(u_{(4)}'))=0$ for 
$j\geq 1$ and $0\leq m\leq N_j$.

Let us use $\pi^{C_{T(W_1)\otimes T(W_2)}}_{h}$ for an eigenvalue $h$ of 
$C_{T(W_1)\otimes T(W_2)}$ to denote projection from $T(W_1)\otimes T(W_2)$ to 
the $h$-eigenspace of $C_{T(W_1)\otimes T(W_2)}$. Then
\begin{align*}
 \langle\pi^{\widetilde{\Omega}_{1,2}}_{h_6-h_1-h_2+N_j} & 
(g^*_{N_j-m}(u_{(4)}')), u_{(1)}\otimes u_{(2)}\otimes u_{(3)}\rangle\nonumber\\
 & =\langle 
u_{(4)}',o^{\mathcal{Y}^1}_{-N_j+m-1}(o^{\mathcal{Y}^2}_{N_j-m-1}(\pi^{C_{
T(W_1)\otimes T(W_2)}}_{2(\ell+h^\vee)(h_6+N_j)}(u_{(1)}\otimes u_{(2)}))\otimes 
u_{(3)})\rangle
\end{align*}
Note that the image of $\pi^{C_{T(W_1)\otimes 
T(W_2)}}_{2(\ell+h^\vee)(h_6+N_j)}$ is the sum of all $\g$-submodules of 
$T(W_1)\otimes T(W_2)$ which are isomorphic to some $L_{\lambda'}$ where 
$\lambda'$ is a dominant integral weight of $\g$ that satisfies 
$h_{\lambda',\ell}=h_6+N_j$. Note also that the $n=0$ case of 
\eqref{intwopcomm3} implies that $o^{\mathcal{Y}^2}_{N_j-m-1}$ is a 
$\g$-homomorphism, which maps $T(W_1)\otimes T(W_2)$ into $(M_2)_{(h_6+N_j-m)}$. 
Thus to show that 
$\pi^{\widetilde{\Omega}_{1,2}}_{h_6-h_1-h_2+N_j}(g^*_{N_j-m}(u_{(4)}'))=0$ for 
$j\geq 1$ and $0\leq m\leq N_j$, it is enough to show that $(M_2)_{(h_6+N_j-m)}$ 
cannot contain a $\g$-submodule with highest weight $\lambda'$ such that 
$h_{\lambda',\ell}=h_6+N_j$.

Suppose that $T(M_2)$ is an irreducible $\g$-module with highest weight 
$\lambda$, so that $\langle\lambda,\theta\rangle\leq\ell$. Then 
$h_6=h_{\lambda,\ell}$; also, recalling from Subsection \ref{sub:gtilde} that a 
$\ghat$-module becomes a $\widetilde{\g}$-module on which the derivation 
$\mathbf{d}$ acts as $-L(0)$, we see that $M_2$ as a $\widetilde{\g}$-module is 
isomorphic to $L(\Lambda)$ where
\begin{equation*}
 \Lambda=\lambda+\ell\mathbf{k}'-h_{\lambda,\ell}\mathbf{d}'.
\end{equation*}
We need to show that for a dominant integral weight $\lambda'$ of $\g$ such that 
$h_{\lambda',\ell}=h_{\lambda,\ell}+N_j$ for $j\geq 1$, and for $0\leq m\leq 
N_j$, $L(\Lambda)$ cannot have
\begin{equation*} 
\Lambda'=\lambda'+\ell\mathbf{k}'-(h_{\lambda,\ell}+N_j-m)\mathbf{d}
'=\lambda'+\ell\mathbf{k}'-(h_{\lambda',\ell}-m)\mathbf{d}'
\end{equation*}
as a weight. But this follows immediately from Theorem \ref{weightstheo}, 
completing the proof that $w_{It}=F_{It}(u_{(4)}')$. 

Now that we know $F(x)=\varphi^{\widetilde{\Omega}_{2,3}}_{F_{Pr}(u_{(4)}')}(x)$ 
and $G(x)=\varphi^{\widetilde{\Omega}_{1,2}}_{F_{It}(u_{(4)}')}(x)$, we can 
prove that $\Phi_{KZ}(F_{It}(u_{(4)}'))=F_{Pr}(u_{(4)}')$ for $u_{(4)}'\in 
T(W_4)^*$. For this, it is enough to show that
\begin{equation*} 
\varphi^{\widetilde{\Omega}_{2,3}}_{\Phi_{KZ}(F_{It}(u_{(4)}'))}(z)=\varphi^{
\widetilde{\Omega}_{2,3}}_{F_{Pr}(u_{(4)}')}(z)
\end{equation*}
for all $z$ contained in some non-empty open set $U$ of 
$\C\setminus((-\infty,0]\cup [1,\infty))$. In fact, we take $U$ to be the set of 
$z\in\C$ such that $\vert z\vert<1$ and $\mathrm{Re}\,z>\frac{1}{2}$. 
Equivalently, $U$ is the set of $z\in\C$ which satisfy
\begin{equation*}
 1>\vert z\vert >\vert 1-z\vert >0.
\end{equation*}
We recall from Subsection \ref{Drinassoc} the isomorphisms
\begin{equation*}
 \varphi_{\widetilde{\Omega}_{1,2}}: w\mapsto 
\varphi^{\widetilde{\Omega}_{1,2}}_w(z)
\end{equation*}
and
\begin{equation*}
 \varphi_{\widetilde{\Omega}_{2,3}}: 
w\mapsto\varphi^{\widetilde{\Omega}_{2,3}}_w(1-z)
\end{equation*}
from $(T(W_1)\otimes T(W_2)\otimes T(W_3))^*$ to $S_{KZ}$.

Now, using \eqref{DAdefprop}, Corollary \ref{proditshape}, \eqref{intwopassoc}, 
\eqref{FcomesfromFpr}, and \eqref{GcomesfromFit}, we see that for any $z\in U$,
\begin{align*}
 \langle 
1^{\widetilde{H}}\varphi^{\widetilde{\Omega}_{2,3}}_{\Phi_{KZ}(F_{It}(u_{(4)}'))
} & (z), u_{(1)}\otimes u_{(2)}\otimes u_{(3)}\rangle\nonumber\\
 & =\langle 
1^{\widetilde{H}}((\varphi_{\widetilde{\Omega}_{2,3}}\circ\Phi_{KZ})(F_{It}(u_{
(4)}')))(1-z), u_{(1)}\otimes u_{(2)}\otimes u_{(3)}\rangle\nonumber\\
 & =\langle 
1^{\widetilde{H}}(\varphi_{\widetilde{\Omega}_{1,2}}(F_{It}(u_{(4)}')))(1-z), 
u_{(1)}\otimes u_{(2)}\otimes u_{(3)}\rangle\nonumber\\
 & =\langle 
u_{(4)}',\mathcal{Y}^1(\mathcal{Y}^2(u_{(1)},1-z)u_{(2)},z)u_{(3)}
\rangle\nonumber\\
 & =\langle u_{(4)}', 
\mathcal{Y}_1(u_{(1)},1)\mathcal{Y}_2(u_{(2)},z)u_{(3)}\rangle\nonumber\\
 & =\langle 
1^{\widetilde{H}}\varphi^{\widetilde{\Omega}_{2,3}}_{F_{Pr}(u_{(4)}')}(z),u_{(1)
}\otimes u_{(2)}\otimes u_{(3)}\rangle
\end{align*}
for any $u_{(1)}\in T(W_1)$, $u_{(2)}\in T(W_2)$, and $u_{(3)}\in T(W_3)$. Since 
$1^{\widetilde{H}}$ is invertible, this completes the proof of the theorem.
\end{proof}

\subsection{The associativity isomorphisms in $\imzero-\mathbf{mod}$ and their 
restrictions to top levels}

We now recall from \cite{H1} (see also \cite{HLZ6}) the construction of the 
associativity isomorphisms in the tensor category $\imzero-\mathbf{mod}$, using 
associativity of intertwining operators. We recall from Subsection 
\ref{sub:apptoghat} that for $\imzero$-modules $W_1$ and $W_2$ and 
$z\in\C^\times$, we have chosen
\begin{equation*}
 (W_1\boxtimes_{P(z)} W_2,\boxtimes_{P(z)})=(W_1\boxtimes_{P(1)} 
W_2=S(A(W_1)\otimes_{A(\imzero)} T(W_2)), \mathcal{Y}^{1,2}_{\boxtimes}(\cdot, 
e^{\mathrm{log}\,z})\cdot),
\end{equation*}
where $\mathcal{Y}^{1,2}_\boxtimes$ is the intertwining operator of type 
$\binom{S(A(W_1)\otimes_{A(\imzero)} T(W_2))}{W_1\,\,\,\,\, W_2}$ induced from 
the identity map on $A(W_1)\otimes_{A(\imzero)} T(W_2)$. In particular, we have 
chosen the tensor products $W_1\boxtimes_{P(z)} W_2$ to be the same as modules 
for all $z\in\C^\times$. We also recall that we have chosen $\boxtimes_{P(1)}$ 
as the tensor product bifunctor for $\imzero-\mathbf{mod}$ as a tensor category.

Now suppose $W_1$, $W_2$, and $W_3$ are three $\imzero$-modules. We have the 
tensor product intertwining operators $\mathcal{Y}^{2,3}_\boxtimes$ of type 
$\binom{W_1\boxtimes_{P(1)} W_2}{W_1\,\,\,W_2}$, $\mathcal{Y}^{1,2\otimes 
3}_\boxtimes$ of type 
$\binom{W_1\boxtimes_{P(1)}(W_2\boxtimes_{P(1)}W_3)}{W_1\,\,\,W_2\boxtimes_{P(1)
}W_3}$, $\mathcal{Y}^{1,2}_\boxtimes$ of type 
$\binom{W_1\boxtimes_{P(1)}W_2}{W_1\,\,\, W_2}$, and $\mathcal{Y}^{1\otimes 2, 
3}_\boxtimes$ of type 
$\binom{(W_1\boxtimes_{P(1)}W_2)\boxtimes_{P(1)}W_3}{W_1\boxtimes_{P(1)}W_2\,\,\,W_3}$. Then part 4 of Theorem \ref{intwopassoc} and Remark \ref{assocunique} 
imply that there is a unique intertwining operator $\mathcal{Y}$ of type 
$\binom{(W_1\boxtimes_{P(1)}W_2)\boxtimes_{P(1)} 
W_3}{W_1\,\,\,W_2\boxtimes_{P(1)} W_3}$ such that
\begin{equation}\label{tensintwopassoc}
 \langle w_{(4)}',\mathcal{Y}^{1\otimes 
2,3}_\boxtimes(\mathcal{Y}^{1,2}_\boxtimes(w_{(1)},z_1-z_2)w_{(2)}, 
z_2)w_{(3)}\rangle =\langle 
w_{(4)}',\mathcal{Y}(w_{(1)},z_1)\mathcal{Y}^{2,3}_\boxtimes(w_{(2)},z_2)w_{(3)}
\rangle
\end{equation}
for any $w_{(1)}\in W_1$, $w_{(2)}\in W_2$, $w_{(3)}\in W_3$, $w_{(4)}'\in W_4$, 
and $z_1,z_2\in\C^\times$ satisfying
\begin{equation}\label{z1z2cond}
 \vert z_1\vert>\vert z_2\vert>\vert z_1-z_2\vert >0;
\end{equation}
 as usual, we substitute formal variables with complex numbers using the branch 
$\mathrm{log}\,z$ of the logarithm.

Now the universal property of the $P(z_1)$-tensor product implies that there is 
a unique $\imzero$-homomorphism
\begin{equation*}
 \mathcal{A}_{z_1,z_2}: W_1\boxtimes_{P(z_1)}(W_2\boxtimes_{P(1)} 
W_3)=W_1\boxtimes_{P(1)}(W_2\boxtimes_{P(1)}W_3)\rightarrow 
(W_1\boxtimes_{P(1)}W_2)\boxtimes_{P(1)}W_3
\end{equation*}
such that
\begin{equation}\label{Adef}
 \mathcal{A}_{z_1,z_2}\circ\mathcal{Y}^{1,2\otimes 3}_\boxtimes(\cdot, z_1)\cdot 
=\mathcal{Y}(\cdot,z_1)\cdot.
\end{equation}
In particular, \eqref{tensintwopassoc} implies
\begin{equation}\label{charassoc}
 \langle w_{(4)}', (w_{(1)}\boxtimes_{P(z_1-z_2)} w_{(2)})\boxtimes_{P(z_2)} 
w_{(3)}\rangle=\langle w_{(4)}', \mathcal{A}_{z_1,z_2}(w_{(1)}\boxtimes_{P(z_1)} 
(w_{(2)}\boxtimes_{P(z_2)} w_{(3)}))\rangle
\end{equation}
for any $w_{(1)}\in W_1$, $w_{(2)}\in W_2$, $w_{(3)}\in W_3$, and $w_{(4)}'\in 
W_4$. It is easy to see that projections to the conformal weight spaces of 
elements $\mathcal{Y}^{1,2\otimes 3}_\boxtimes(w_{(1)},z_1)w_{(2,3)}$ for 
$w_{(1)}\in W_1$, $w_{(2,3)}\in W_2\boxtimes_{P(1)} W_3$ span 
$W_1\boxtimes_{P(z_1)}(W_2\boxtimes_{P(1)} 
W_3)=W_1\boxtimes_{P(1)}(W_2\boxtimes_{P(1)}W_3)$ (see for instance Lemma 4.9 of 
\cite{H1} or Proposition 4.23 in \cite{HLZ3}). This means that 
$W_1\boxtimes_{P(1)}(W_2\boxtimes_{P(1)}W_3)$ is also spanned by coefficients of 
powers of $x$ in series $\mathcal{Y}^{1,2\otimes 
3}_\boxtimes(w_{(1)},x)w_{(2,3)}$ for $w_{(1)}\in W_1$, $w_{(2,3)}\in 
W_2\boxtimes_{P(1)} W_3$. Thus the homomorphism $\mathcal{A}_{z_1,z_2}$ is 
completely determined by the condition
\begin{equation*}
 \mathcal{A}_{z_1,z_2}\circ\mathcal{Y}^{1,2\otimes 
3}_\boxtimes(\cdot,x)\cdot=\mathcal{Y}(\cdot,x)\cdot,
\end{equation*}
which follows from Proposition \ref{opmapiso} and \eqref{Adef}. Consequently, 
since $\mathcal{Y}^{1,2\otimes 3}_\boxtimes$ and $\mathcal{Y}$ are independent 
of $z_1$ and $z_2$, so is $\mathcal{A}_{z_1,z_2}$. In fact, 
$\mathcal{A}_{z_1,z_2}$ is the associativity isomorphism 
$\mathcal{A}_{W_1,W_2,W_3}$.
\begin{rema}
 The invertibility of $\mathcal{A}$ follows from part 3 Theorem 
\ref{intwopassoc}, which implies that there is a homomorphism
 \begin{equation*}
  \mathcal{A}^{-1}_{W_1,W_2,W_3}: 
(W_1\boxtimes_{P(1)}W_2)\boxtimes_{P(1)}W_3\rightarrow 
W_1\boxtimes_{P(1)}(W_2\boxtimes_{P(1)}W_3)
 \end{equation*}
which satisfies
\begin{equation*}
 \langle w_{(4)}', \mathcal{A}^{-1}_{W_1,W_2,W_3}((w_{(1)}\boxtimes_{P(z_1-z_2)} 
w_{(2)})\boxtimes_{P(z_2)} w_{(3)})\rangle=\langle w_{(4)}', 
w_{(1)}\boxtimes_{P(z_1)} (w_{(2)}\boxtimes_{P(z_2)} w_{(3)})\rangle
\end{equation*}
for any $z_1,z_2\in\C^\times$ which satisfy \eqref{z1z2cond}. This condition 
together with \eqref{charassoc} shows that $\mathcal{A}^{-1}_{W_1,W_2,W_3}$ is 
in fact the inverse of $\mathcal{A}_{W_1,W_2,W_3}$.
\end{rema}

Now suppose $U_1$, $U_2$, and $U_3$ are objects of $\mathbf{D}(\g,\ell)$. We 
will use Theorem \ref{mainassoctheo} and \eqref{charassoc} to describe the 
associativity isomorphism $\mathcal{A}_{U_1,U_2,U_3}$. First we introduce some 
notation: we use $W^{(\ell)}_{U_1, (U_2, U_3)}$ to denote the kernel of the 
composition of the natural projections
\begin{equation*}
 U_1\otimes U_2\otimes U_3\rightarrow U_1\otimes (U_2\boxtimes U_3)\rightarrow 
U_1\boxtimes (U_2\boxtimes U_3),
\end{equation*}
and similarly we use $W^{(\ell)}_{(U_1, U_2), U_3}$ to denote the kernel of the 
composition of the natural projections
\begin{equation*}
 U_1\otimes U_2\otimes U_3\rightarrow (U_1\boxtimes U_2)\otimes U_3\rightarrow 
(U_1\boxtimes U_2)\boxtimes U_3. 
\end{equation*}
Thus using the natural isomorphism of Proposition \ref{identboxtimes}, we have 
natural isomorphisms
\begin{equation*}
\Psi_{U_1,(U_2,U_3)}: (U_1\otimes U_2\otimes 
U_3)/W^{(\ell)}_{U_1,(U_2,U_3)}\rightarrow 
T(S(U_1)\boxtimes_{P(1)}(S(U_2)\boxtimes_{P(1)} S(U_3)))
\end{equation*}
and
\begin{equation*}
 \Psi_{(U_1,U_2),U_3}: (U_1\otimes U_2\otimes 
U_3)/W^{(\ell)}_{(U_1,U_2),U_3}\rightarrow 
T((S(U_1)\boxtimes_{P(1)}S(U_2))\boxtimes_{P(1)} S(U_3)).
\end{equation*}
Then from the discussion in Subsection \ref{sub:tensequiv}, we can identify 
$\mathcal{A}_{U_1,U_2,U_3}$ with the isomorphism
\begin{align*}
 \Psi_{(U_1,U_2),U_3}^{-1}\circ 
T(\mathcal{A}_{S(U_1),S(U_2),S(U_3)})\circ\Psi_{U_1,(U_2,U_3)}: (U_1\otimes & 
U_2\otimes U_3)/W^{(\ell)}_{U_1,(U_2,U_3)}\nonumber\\
 & \rightarrow (U_1\otimes U_2\otimes U_3)/W^{(\ell)}_{(U_1,U_2),U_3}
\end{align*}

Using the notation of Subsection \ref{sub:mainassoctheo} and taking $W_i=S(U_i)$ 
for $i=1,2,3$, $\mathcal{Y}^1=\mathcal{Y}^{1\otimes 2,3}_\boxtimes$, 
$\mathcal{Y}^2=\mathcal{Y}^{1,2}_\boxtimes$, 
$\mathcal{Y}_1=\mathcal{A}_{S(U_1),S(U_2),S(U_3)}\circ\mathcal{Y}^{1,2\otimes 
3}_\boxtimes$, and $\mathcal{Y}_2=\mathcal{Y}^{2,3}_\boxtimes$, we see from the 
definitions (in particular \eqref{identboxtimesiso}) that
\begin{equation*}
 \Psi_{(U_1,U_2),U_3}(u_{(1)}\otimes u_{(2)}\otimes 
u_{(3)}+W^{(\ell)}_{(U_1,U_2),U_3})=o^{\mathcal{Y}^{1\otimes 
2,3}_\boxtimes}_{-1}(o^{\mathcal{Y}^{1,2}_\boxtimes}_{-1}(u_{(1)}\otimes 
u_{(2)})\otimes u_{(3)})
\end{equation*}
and
\begin{equation*}
 \Psi_{U_1,(U_2,U_3)}(u_{(1)}\otimes u_{(2)}\otimes 
u_{(3)}+W^{(\ell)}_{U_1,(U_2,U_3)})=o^{\mathcal{Y}^{1,2\otimes 
3}_\boxtimes}_{-1}(u_{(1)}\otimes 
o^{\mathcal{Y}^{2,3}_\boxtimes}_{-1}(u_{(2)}\otimes u_{(3)})),
\end{equation*}
for any $u_{(1)}\in U_1$, $u_{(2)}\in U_2$, $u_{(3)}\in U_3$, so that
\begin{equation*}
 \langle F_{It}(u_{(4)}'), u_{(1)}\otimes u_{(2)}\otimes u_{(3)}\rangle=\langle 
u_{(4)}', \Psi_{(U_1,U_2),U_3}(u_{(1)}\otimes u_{(2)}\otimes 
u_{(3)}+W^{(\ell)}_{(U_1, U_2), U_3})\rangle
\end{equation*}
and
\begin{align*}
 \langle F_{Pr}(u_{(4)}'), & u_{(1)}\otimes u_{(2)}\otimes 
u_{(3)}\rangle\nonumber\\
 &=\langle u_{(4)}', 
(T(\mathcal{A}_{S(U_1),S(U_2),S(U_3)})\circ\Psi_{U_1,(U_2,U_3)})(u_{(1)}\otimes 
u_{(2)}\otimes u_{(3)}+W^{(\ell)}_{U_1, (U_2, U_3)})\rangle
\end{align*}
 for any $u_{(4)}'\in T((W_1\boxtimes_{P(1)}W_2)\boxtimes_{P(1)}W_3)^*$. Then we 
immediately obtain from Theorem \ref{mainassoctheo} the following description of 
the associativity isomorphisms in $\mathbf{D}(\g,\ell)$:
\begin{theo}\label{associsotoplevel}
 Under the identifications of $U_1\boxtimes (U_2\boxtimes U_3)$ with 
$(U_1\otimes U_2\otimes U_3)/W^{(\ell)}_{U_1,(U_2,U_3)}$ and $(U_1\boxtimes 
U_2)\boxtimes U_3$ with $(U_1\otimes U_2\otimes 
U_3)/W^{(\ell)}_{(U_1,U_2),U_3}$,
 \begin{equation*}
  \mathcal{A}_{U_1,U_2,U_3}(u_{(1)}\otimes u_{(2)}\otimes 
u_{(3)}+W^{(\ell)}_{U_1, (U_2, U_3)})=\Phi^*_{KZ}(u_{(1)}\otimes u_{(2)}\otimes 
u_{(3)})+W^{(\ell)}_{(U_1, U_2), U_3},
 \end{equation*}
for any $u_{(1)}\in U_1$, $u_{(2)}\in U_2$, and $u_{(3)}\in U_3$, where 
$\Phi_{KZ}^*$ is the automorphism of $U_1\otimes U_2\otimes U_3$ adjoint to 
$\Phi_{KZ}$.
\end{theo}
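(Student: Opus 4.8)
The plan is to reduce everything to the central identity $\Phi_{KZ}(F_{It}(u_{(4)}'))=F_{Pr}(u_{(4)}')$ of Theorem \ref{mainassoctheo}, combined with the two pairing formulas for $F_{It}$ and $F_{Pr}$ established just above in terms of $\Psi_{(U_1,U_2),U_3}$ and $T(\mathcal{A}_{S(U_1),S(U_2),S(U_3)})\circ\Psi_{U_1,(U_2,U_3)}$. Since Subsection \ref{sub:tensequiv} identifies $\mathcal{A}_{U_1,U_2,U_3}$ with $\Psi_{(U_1,U_2),U_3}^{-1}\circ T(\mathcal{A}_{S(U_1),S(U_2),S(U_3)})\circ\Psi_{U_1,(U_2,U_3)}$, applying the isomorphism $\Psi_{(U_1,U_2),U_3}$ to both sides of the asserted formula reduces the theorem to the single identity
\begin{equation*}
 T(\mathcal{A}_{S(U_1),S(U_2),S(U_3)})\circ\Psi_{U_1,(U_2,U_3)}(u_{(1)}\otimes u_{(2)}\otimes u_{(3)}+W^{(\ell)}_{U_1,(U_2,U_3)})=\Psi_{(U_1,U_2),U_3}(\Phi_{KZ}^*(u_{(1)}\otimes u_{(2)}\otimes u_{(3)})+W^{(\ell)}_{(U_1,U_2),U_3}),
\end{equation*}
where both sides lie in the finite-dimensional space $T((S(U_1)\boxtimes_{P(1)}S(U_2))\boxtimes_{P(1)}S(U_3))$.

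Because that space is finite-dimensional, it suffices to check that the two sides pair identically against every $u_{(4)}'\in T((S(U_1)\boxtimes_{P(1)}S(U_2))\boxtimes_{P(1)}S(U_3))^*$. Pairing the left-hand side with $u_{(4)}'$ gives exactly $\langle F_{Pr}(u_{(4)}'),u_{(1)}\otimes u_{(2)}\otimes u_{(3)}\rangle$ by the displayed formula for $F_{Pr}$. For the right-hand side I would first extend the formula for $F_{It}$ by linearity, so that $\langle u_{(4)}',\Psi_{(U_1,U_2),U_3}(v+W^{(\ell)}_{(U_1,U_2),U_3})\rangle=\langle F_{It}(u_{(4)}'),v\rangle$ for every $v\in U_1\otimes U_2\otimes U_3$, and then take $v=\Phi_{KZ}^*(u_{(1)}\otimes u_{(2)}\otimes u_{(3)})$. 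By the defining relation $\langle\Phi_{KZ}f,v\rangle=\langle f,\Phi_{KZ}^*v\rangle$ of the adjoint, this equals $\langle\Phi_{KZ}(F_{It}(u_{(4)}')),u_{(1)}\otimes u_{(2)}\otimes u_{(3)}\rangle$, and Theorem \ref{mainassoctheo} replaces $\Phi_{KZ}(F_{It}(u_{(4)}'))$ by $F_{Pr}(u_{(4)}')$. Thus both pairings equal $\langle F_{Pr}(u_{(4)}'),u_{(1)}\otimes u_{(2)}\otimes u_{(3)}\rangle$, and nondegeneracy of the pairing between $T(W_4)$ and its dual forces the two elements to coincide.

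Given that the substantive analytic content is carried entirely by Theorem \ref{mainassoctheo}, the remaining work is bookkeeping, and I do not expect a genuine obstacle so much as one conceptual subtlety to keep track of. Namely, $\mathcal{A}_{U_1,U_2,U_3}$ is \emph{a priori} well-defined on cosets, being inherited from the honest associativity isomorphism of $\imzero-\mathbf{mod}$, whereas $\Phi_{KZ}^*$ is only an automorphism of the \emph{full} triple tensor product $U_1\otimes U_2\otimes U_3$. The theorem therefore simultaneously asserts that $\Phi_{KZ}^*$ carries $W^{(\ell)}_{U_1,(U_2,U_3)}$ into $W^{(\ell)}_{(U_1,U_2),U_3}$, a well-definedness statement that is not apparent from the construction of the Drinfeld associator and must emerge as a \emph{consequence} of the identity rather than as an input. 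Phrasing the entire argument through pairings, as above, is precisely what lets me avoid verifying this containment by hand: once $\mathcal{A}_{U_1,U_2,U_3}$ is known to descend to the quotient, the pairing computation pins down its value and the compatibility of $\Phi_{KZ}^*$ with the two submodules comes for free.
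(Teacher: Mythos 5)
Your proposal is correct and follows essentially the same route as the paper: the paper likewise identifies $\mathcal{A}_{U_1,U_2,U_3}$ with $\Psi_{(U_1,U_2),U_3}^{-1}\circ T(\mathcal{A}_{S(U_1),S(U_2),S(U_3)})\circ\Psi_{U_1,(U_2,U_3)}$, invokes the two pairing formulas for $F_{It}$ and $F_{Pr}$, and obtains the result from Theorem \ref{mainassoctheo}, with your duality/nondegeneracy computation simply spelling out the step the paper calls immediate. Your closing observation that $\Phi_{KZ}^*(W^{(\ell)}_{U_1,(U_2,U_3)})=W^{(\ell)}_{(U_1,U_2),U_3}$ emerges as a consequence rather than an input is exactly the content of the paper's remark following the theorem.
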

\begin{rema}
 Note that one non-trivial implication of Theorem \ref{associsotoplevel} is 
that 
 \begin{equation*}
 \Phi_{KZ}^*(W^{(\ell)}_{U_1, (U_2, U_3)})=W^{(\ell)}_{(U_1, U_2), U_3}.
\end{equation*}
It seems to be difficult to prove this directly, without using the associativity 
isomorphisms in $\imzero-\mathbf{mod}$. Also, the fact that $W^{(\ell)}_{U_1, 
(U_2, U_3)}$ and $W^{(\ell)}_{(U_1, U_2), U_3}$ are generally distinct subspaces 
of $U_1\otimes U_2\otimes U_3$ shows why the associativity isomorphisms in 
$\mathbf{D}(\g,\ell)$ must be induced by ordinarily non-trivial isomorphisms 
such as $\Phi_{KZ}^*$.
 \end{rema}

 \paragraph{Acknowledgments}
I would like to thank Yi-Zhi Huang for many helpful discussions and suggestions 
regarding this work, and I would like to thank the referees for suggesting corrections and enhancements. This version of the paper has been accepted for publication in \textit{Communications in Mathematical Physics}; the final publication is available at Springer via http://dx.doi.org/10.1007/s00220-016-2683-y.

\end{document}